\title{$K$-Knuth Equivalence for Increasing Tableaux}
\date{\today}
\author[Gaetz]{Christian Gaetz}
\author[Mastrianni]{Michelle Mastrianni}
\author[Patrias]{Rebecca Patrias}
\author[Peck]{Hailee Peck}
\author[Robichaux]{Colleen Robichaux}
\author[Schwein]{David Schwein}
\author[Tam]{Ka Yu Tam}
\newcommand{\boxsize}{1.3em}
\newtheorem{theorem}{Theorem}[section]
\newtheorem{lemma}[theorem]{Lemma}
\newtheorem{conjecture}[theorem]{Conjecture}
\newtheorem{corollary}[theorem]{Corollary}
\newtheorem{proposition}[theorem]{Proposition}
\newtheorem{problem}{Problem}
\newtheorem*{claim}{Claim}
\newtheorem{case}{Case}
\theoremstyle{definition}
\newtheorem{definition}[theorem]{Definition}
\newtheorem{example}[theorem]{Example}
\newcommand{\N}{\mathbb N}
\newcommand{\row}{\mathfrak{row}}
\newcommand{\col}{\mathfrak{col}}
\newcommand{\lis}{\mathfrak{lis}}
\newcommand{\lds}{\mathfrak{lds}}
\newcommand{\sss}{\scriptstyle} 
\newcommand{\pend}{\mathcal P_{\text{end}}}
\DeclareMathOperator{\swap}{swap}
\DeclareMathOperator{\arm}{arm}
\DeclareMathOperator{\leg}{leg}
\begin{document}

\begin{abstract}
A $K$-theoretic analogue of RSK insertion and the Knuth equivalence relations were introduced in \cite{BuKr06, BuSa13}. The resulting $K$-Knuth equivalence relations on words and increasing tableaux on~$[n]$ have
prompted investigation into the equivalence classes of tableaux
arising from these relations. Of particular interest are the tableaux
that are unique in their class, which we refer to as \textit{unique
rectification targets} (URTs). In this paper we give several new
families of URTs and a bound on the length of intermediate
words connecting two $K$-Knuth equivalent words. In addition, we
describe an algorithm to determine if two words are $K$-Knuth equivalent
and to compute all $K$-Knuth equivalence classes of tableaux on~$[n]$.
\end{abstract}

\maketitle

\section{Introduction}

In 2006, Buch et al.\ introduced a new combinatorial algorithm called \textit{Hecke insertion}, used to insert a word into an increasing tableau~\cite{BuKr06}. The algorithm is a $K$-theoretic analogue of the well-known Schensted algorithm for the insertion of a word into a semistandard Young tableau.

If two words insert into the same tableau via Schensted's insertion algorithm, they are said to be \textit{Knuth equivalent} and can be connected via the \textit{Knuth equivalence relations}. Knuth equivalence has a $K$-theoretic analogue referred to as $K$-Knuth equivalence, first defined in~\cite{BuSa13} and motivated by Thomas and Yong's $K$-theoretic jeu de taquin algorithm introduced in~\cite{ThYo07}. An important difference between Knuth equivalence and $K$-Knuth equivalence is that, while insertion equivalence via the Schensted algorithm (resp. the Hecke algorithm) implies Knuth equivalence (resp. $K$-Knuth equivalence), the converse holds for the standard version but not for the $K$-theoretic version. 
In other words, two words can be $K$-Knuth equivalent but insert into different tableaux via the Hecke insertion algorithm.

A $K$-Knuth equivalence class typically contains words from different insertion classes. There are some $K$-Knuth classes, however, for which all words in the class insert into the same tableau. A class with this property is called a \textit{unique rectification class}, and its corresponding insertion tableau is called a \textit{unique rectification target} (URT). In both~\cite{BuKr06} and~\cite{PaPy14}, Hecke insertion and $K$-Knuth equivalence were used to rederive a $K$-theoretic version of the Littlewood-Richardson rule for the cohomology rings of Grassmanians. 
In order to get a working version of this rule, non-URTs needed to be avoided. 
Hence Patrias and Pylyavskyy~\cite{PaPy14} posed the following natural question, an open problem.

\begin{problem} \label{problem}
Characterize all URTs or at least provide an efficient algorithm to determine if a given tableau is a URT.
\end{problem}

This paper makes partial progress toward
answering Problem~\ref{problem}. 
In more detail, we will extend previous results,
many from a paper ~\cite{BuSa13} of Buch and Samuel about URTs and $K$-Knuth equivalence.
In Section 2, we provide more background on Hecke insertion and $K$-Knuth equivalence and discuss the $K$-theoretic extension of the jeu de taquin algorithm of Thomas and Yong, which provides another way of determining whether two tableaux are in the same equivalence class. 
We also summarize Buch and Samuel's results about URTs and give invariants for classes of $K$-Knuth equivalent tableaux.

In Section 3, we give a finite-time algorithm to compute all $K$-Knuth classes of tableaux on a given alphabet $[n]$. From this we derive a finite-time algorithm to determine if two given words are $K$-Knuth equivalent. 

In Section 4, we show that every two $K$-Knuth equivalent tableaux on~$[n]$ can be connected by intermediate words of length at most $\frac{1}{3}n(n+1)(n+2) + 3$.
The proof of this bound also includes a useful lemma stating that any words of length $\ell$ in an insertion class can be connected to the row word of the corresponding insertion tableau by moving through intermediate words of length at most $\ell$.

Sections 5 and 6 respectively detail two new families of URTs: \textit{right-alignable tableaux} and \textit{hook-shaped tableaux}. We introduce the notion of a \textit{repetitive reading word} and use it to prove that all right-alignable tableaux are URTs. We also give a method for easily determining whether any given hook-shaped tableau is a URT based on the values of its entries.

Finally, in Section 7, we discuss various findings on the number of $K$-Knuth equivalence classes of tableaux on an alphabet $[n]$ and the number of unique rectification classes among them. We then give additional conjectures and related results.

\section{Background}
The goal of this section is to familiarize the reader
with the language of $K$-Knuth equivalence relations
on increasing tableaux, which for the most part
parallels the better-known 
Knuth equivalence relations~\cite{LaLeTh02}.

\subsection{Increasing Tableaux}\label{section:increasing tableaux}
In this section, we will define in more detail 
\emph{increasing tableaux} \cite{ThYo07},
the main subject of this paper,
as well as related terminology,
following the formalization of~\cite[Section 3.1]{BuSa13}.
Throughout this paper, $\mathbb N$ will denote
the set of positive integers.

Elements of the set~$\N\times\N$ are called \emph{boxes}
and will form the building blocks of increasing tableaux.
We will visualize $\N\times\N$ as an infinite matrix
comprised of boxes: the box~$(i,j)$
appears in row~$i$ and column~$j$.

Suppose $\alpha=(i_1,j_1)$ and $\beta=(i_2,j_2)$
are boxes.
We say that $\alpha$ is \textit{strictly northeast}
of $\beta$ if $i_1< i_2$ and $j_1>j_2$, 
and we say that $\alpha$ is \textit{weakly northeast} 
of $\beta$ if $i_1\leq i_2$ and $j_1\geq j_2$.
The reader can formulate the analogous definitions
for the remaining cardinal directions,
which we omit.
In addition, we say $\alpha$
is \textit{above} $\beta$ to mean $\alpha$
is north of $\beta$, we say $\alpha$ is \emph{directly above}
$\beta$ to mean $i_1=i_2+1$ and $j_1=j_2$,
and so on.

A \textit{shape} $\lambda$ is any finite subset of $\N\times\N$.
We say $\lambda$~is a \emph{straight shape} if
whenever $\lambda$ contains the box~$\alpha$
it contains all boxes weakly northwest of~$\alpha$.
A \textit{skew shape}~$\nu/\mu$
is the set-theoretic difference of two straight shapes
$\nu\supseteq\mu$. 

\begin{example} \label{example:shape}
Of the shapes below, the first is neither straight nor skew,
the second is skew but not straight, and the third is straight.
\[
\ytableausetup{boxsize=\boxsize}
\begin{ytableau}
{} & & \none \\
\none & & \none \\
\none & \none &
\end{ytableau}\qquad
\begin{ytableau}
\none & & & \\
\none & & \none \\
& & \none
\end{ytableau}\qquad
\begin{ytableau}
{} & & & \\
& & &\\
&
\end{ytableau}
\]
\end{example}

We can identify a straight shape with a partition
as follows.
Given a straight shape~$\lambda$,
let $\lambda_i$ denote the number
of boxes in row $i$.
If $\lambda$~has $\ell$ nonempty rows
then $\lambda$~is uniquely determined
by the tuple $(\lambda_1, \lambda_2, \cdots, \lambda_\ell)$.
By definition, $\lambda_1\geq \lambda_2\geq\cdots\geq\lambda_\ell$.
The straight shape given in Example~\ref{example:shape},
for instance, corresponds to the partition~$(4,4,2)$.

A \textit{filling} of a shape $\lambda$ is any map $T:\lambda\to\N$
that assigns an integer to each box of $\lambda$.
The image of a box $\alpha$ under $T$
is called the \textit{label} or \textit{entry} for $\alpha$. We say that the filling $T$ is an \textit{increasing tableau} (\textit{of shape $\lambda$}) if the entries of $T$ strictly increase down columns and
from left to right along rows,
that is, if $T(\alpha)<T(\beta)$ whenever
$\alpha$ is weakly northwest of and different from $\beta$. In this paper, all tableaux are increasing tableaux,
and in particular we will not consider semistandard tableaux.
A tableau $T$ of shape $\lambda$
is \emph{straight} if $\lambda$ is straight
and \emph{skew} if $\lambda$ is skew.
Unless otherwise mentioned, we will write
``tableau'' to mean ``straight tableau.''

\begin{example} \label{example:tableaux}
Of the fillings below, only the third is an increasing tableau.
\[
\ytableausetup{boxsize=\boxsize}
\begin{ytableau}
1 & 2 & 2 & 5 \\
3 & 4 & 5 & 5\\
7 & 7
\end{ytableau}\qquad
\begin{ytableau}
1 & 2 & 3 & 6 \\
3 & 4 & 5 & 6\\
3 & 7
\end{ytableau}\qquad
\begin{ytableau}
1 & 2 & 3 & 5 \\
3 & 4 & 5 & 6\\
6 & 7
\end{ytableau}
\]
\end{example}

As with matrices, let $\lambda^t$
denote the \emph{transpose} of $\lambda$,
defined by
\[
\lambda^t = \{(j,i)\,:\,(i,j)\in\lambda\}.
\]
Let $T^t : \lambda^t\to\N$
denote the transpose of $T$,
defined by $T^t(j,i) = T(i,j)$.
The transpose of a tableau or shape 
is sometimes referred to as its \emph{conjugate}.

\begin{example}
The tableau
\[
\begin{ytableau}
1 & 2 & 3 & 5 \\
3 & 4 & 5 & 6\\
6 & 7
\end{ytableau}
\hspace{0.3cm}
\mbox{has transpose}
\hspace{0.3cm}
\begin{ytableau}
1 & 3 & 6 \\
2 & 4 & 7 \\
3 & 5 \\
5 & 6
\end{ytableau}.
\]
\end{example}

\begin{definition}
A tableau~$T$ of any shape is \emph{initial} if the set
of labels of~$T$ is~$[n]:=\{1,2,\ldots,n\}$ for some $n\in\N$.
A word~$w$ is \emph{initial} if the set of
letters appearing in~$w$ is~$[n]$
for some $n\in\N$.
\end{definition}
\begin{example}
The word~$124335$ is initial
but the word~$14355$ is not.
Of the two tableaux below, the left tableau
is initial and the right tableau is not.
\[
\begin{ytableau}
1 & 2 & 5\\
2 & 3 \\
4\\
5
\end{ytableau}\qquad
\begin{ytableau}
2 & 4 & 8\\
4 & 6 \\
7\\
8
\end{ytableau}
\]
\end{example}
Initial tableaux are often
easier to work with, and for this reason
we will usually restrict our attention to initial tableaux.
This restriction comes at no loss of generality because we can relabel a tableau
without changing anything essential
provided the order relations between the labels are preserved. 
The following definition formalizes this notion.
\begin{definition}
Let $w=w_1w_2\dots w_k$ be a word and let
$a_1 < a_2 < \cdots < a_\ell$ be the ordered list of letters
appearing in~$w$. The \emph{standardization}
of~$w$ 
 is the word formed by replacing $a_i$ with $i$ in $w$.

Similarly, let $T$ be a tableau and let
$a_1 < a_2 < \cdots < a_\ell$ be the ordered list
of letters appearing in~$T$.
The standardization of~$T$ is the tableau
formed from $T$ by replacing every entry $a_i$ with $i$.
\end{definition}
\begin{example}
The standardization of the word
$35822$ is $23411$. The standardization
of the tableau
$\young(2568,59)$ is $\young(1234,25)$.
\end{example}

\subsection{Hecke Insertion}
Hecke insertion is an algorithm for inserting a positive integer into an increasing tableau, resulting in another increasing tableau,
which may or may not be the same as the original. Hecke insertion is a $K$-theoretic analogue of the standard Robinson-Schensted-Knuth (RSK) algorithm for the insertion of words into semistandard tableaux.
The elementary step of the Hecke insertion algorithm
is the insertion of a positive integer into a row of the tableau.
After the row is modified, either a new positive integer 
is inserted into the next row or the algorithm terminates.

The rules for Hecke inserting a positive integer~$x$ 
into row~$R$ of a tableau~$T$ are as follows.
Suppose first that $x\geq y$ for all $y\in R$.
\begin{enumerate}
\item If adjoining a box containing $x$ to the end of $R$ results in a 
valid increasing tableau~$T'$, 
then $T'$ is the result of the insertion,
and the algorithm terminates.
\item If adjoining a box containing $x$ to the end of $R$ does \emph{not} result in 
a valid increasing tableau, then $R$ is unchanged
and the algorithm terminates.
\end{enumerate}
Otherwise, let $y$ be the smallest integer in~$R$ that is strictly larger than~$x$.
\begin{enumerate}
\setcounter{enumi}{2} 
\item If replacing $y$ with~$x$ 
results in an increasing tableau, 
then replace $y$ with~$x$
and insert $y$~into the next row.
\item If replacing $y$ with~$x$ does \emph{not} result in an increasing tableau, then insert~$y$ into the next row and do not change~$R$.
\end{enumerate}
We write $T \leftarrow x$ to denote the final tableau
resulting from the Hecke insertion of~$x$ into the first row of~$T$.

It will occasionally be convenient to consider the \textit{column insertion} of $x$ into $T$,
which is computed by performing Hecke insertion with columns
playing the role of rows. 
Formally, the column insertion of $x$ into $T$ is given by $(T^t \leftarrow x)^t$.  From now on, ``insertion'' will always refer to \textit{Hecke insertion}.  
\begin{example}\cite[Example 2.3]{PaPy14}
$$\young(1235,2346,6,7) \leftarrow 3 = \young(1235,2346,6,7)$$
In this example, inserting 3 into the first row invokes rule (4), so we insert 5 into the second row.  
This invokes (4) again, so we insert 6 into the third row.  By (2), we get the tableau shown.
\end{example}
\begin{example}\cite[Example 2.4]{PaPy14}
$$\young(246,368,7) \leftarrow 5 = \young(245,368,78)$$
We first insert 5 into the first row, which by (3) replaces the 6 in the rightmost box, bumping the 6 into the second row.  By rule (4), the second row is unchanged, and we insert an 8 into the third row.  Rule (1) gives the resulting tableau.
\end{example}

Let $w=w_1\cdots w_n$ be a word.  
The \textit{insertion tableau} of $w$, written $P(w)$, 
is formed by recursively Hecke inserting
the letters of $w$ from left to right:
\[
P(w)=(\cdots((\varnothing \leftarrow w_1) \leftarrow w_2)\cdots\leftarrow w_n).
\]

\subsection{$K$-Knuth equivalence}

Just as Hecke insertion is a $K$-theoretic analogue of the standard RSK insertion, \textit{$K$-Knuth equivalence} is the corresponding analogue for Knuth equivalence. Recall that the Knuth equivalence relations on words in $\mathbb{N}$ with distinct letters are as follows:
\begin{align*}
xzy &\sim zxy,\qquad (x<y<z) \\
yxz &\sim yzx,\qquad (x<y<z).
\end{align*}
Two words are said to be \textit{Knuth equivalent} if one can be obtained from the other via a finite series of applications of the above Knuth relations.

In the $K$-theoretic case, we allow words to have repeated letters. The first two rules are precisely the same. However, we now have two additional rules with important consequences. The $K$-Knuth relations are as follows:
\begin{align*}
xzy &\equiv zxy,\qquad (x < y < z) \\
yxz &\equiv yzx,\qquad (x < y < z) \\
x &\equiv xx, \\
xyx &\equiv yxy.
\end{align*}
Again, two words are said to be \textit{$K$-Knuth equivalent} if one can be obtained from the other via a finite series of applications of the above $K$-Knuth relations. We will often refer to an application of a $K$-Knuth relation as a \textit{$K$-Knuth move}. In this terminology, two words are equivalent if one can be obtained from the other using finitely many $K$-Knuth moves.

The third and fourth relations have some important implications. The third rule implies that each $K$-Knuth equivalence class of words has infinitely many words of arbitrarily large length. Because Hecke insertion results in an increasing tableau, there are only finitely many tableaux into which words on an alphabet $[n]$ (words containing at least one of each letter from $\{1,2,...,n\}$) can be inserted. Hence there are finitely many equivalence classes on any alphabet $[n]$ with infinitely many words in each class. This is in contrast to the standard version, in which there are only finitely many words in each class.

The fourth rule implies that a letter can appear a different number of times in two equivalent words of the same length. For example, $121 \equiv 212$, but $1$ appears twice in the first word and once in the second.

We define the \textit{insertion class} of a word $w$ to be $\{w'\,:\,P(w')=P(w)\}$. We have the following result relating insertion class and $K$-Knuth equivalence class.

\begin{proposition}\cite[Theorem 6.2]{BuSa13}
If $w$ and $w'$ are in the same insertion class, then $w \equiv w'$.\end{proposition}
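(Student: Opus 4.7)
The plan is to prove the stronger statement that every word $w$ is $K$-Knuth equivalent to the row reading word $\row(P(w))$ of its insertion tableau. Once this is established, if $w$ and $w'$ satisfy $P(w)=P(w')$, then both are $K$-Knuth equivalent to the common reading word $\row(P(w))$, and hence to each other by symmetry and transitivity.

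I would prove $w\equiv\row(P(w))$ by induction on the length of~$w$. The empty case is trivial. For the inductive step, write $w = u\cdot x$ for a single letter~$x$; by the inductive hypothesis $u\equiv\row(P(u))$. Setting $T=P(u)$, it therefore suffices to establish the one-step claim
\[
\row(T)\cdot x \;\equiv\; \row(T\leftarrow x),
\]
since concatenation with the fixed letter~$x$ respects $K$-Knuth equivalence. This reduces the problem to analyzing a single Hecke insertion, which proceeds row-by-row, so we may reduce further to the effect of inserting a letter into one row while the partial tableau beneath is unchanged: in effect we must show that bumping a letter~$y$ from row~$R$ to the next row~$R'$ is realizable by $K$-Knuth moves on the concatenated reading word.

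For cases~(1) and~(3) of the insertion rules — the ``classical'' cases in which a box is genuinely added or a letter is genuinely replaced — the analysis parallels the proof for ordinary Knuth equivalence: rule~(1) just appends~$x$ in position, and rule~(3) requires the two ordinary commutation relations $xzy\equiv zxy$ and $yxz\equiv yzx$ to slide the bumped letter past the row to its left while leaving behind~$x$. The main obstacle, and the genuinely new ingredient of the $K$-theoretic setting, is the handling of rules~(2) and~(4), where the row is \emph{unchanged} even though a letter was processed. For rule~(2), the inserted $x$ is absorbed because it duplicates an existing entry, and one invokes the idempotent relation $x\equiv xx$ to erase the extra copy from the reading word. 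For rule~(4), the letter~$y$ that gets bumped already appears below in the next row, so the commutation moves alone cannot carry it past the row; here one must use the braid-like relation $xyx\equiv yxy$ (combined with ordinary commutations) to swap the new $y$ with the identical $y$ already present in the shape, thereby moving it down while preserving all other entries.

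The technical crux is therefore a careful bookkeeping exercise: in each sub-case of rules~(2) and~(4), one must identify the exact pattern of three letters in the reading word (comparing the bumped letter~$y$, the element of~$R$ responsible for the bump, and the element of~$R'$ that will interact with~$y$) and verify that the required $K$-Knuth relation applies. Organizing this case analysis — and treating correctly the boundary where the two rows meet in the row reading word — is where the proof does real work. Once the one-step claim is verified in all four cases, the induction and the opening reduction complete the proof.
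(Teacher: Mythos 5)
Your overall strategy --- prove $w\equiv\row(P(w))$ by induction on $|w|$, reduce to the one-step claim $\row(T)\cdot x\equiv\row(T\leftarrow x)$, and then realize the row-by-row bumping by $K$-Knuth moves --- is exactly the strategy this paper uses to prove the stronger Lemma~\ref{lem:equiv within class} (which in addition controls the lengths of the intermediate words); the row-by-row bookkeeping is packaged there as Lemma~\ref{lem:bump}. So the architecture is sound, and the reductions you make (concatenation respects equivalence, one insertion at a time, one row at a time) are all correct. The problem is that you have deferred precisely the part of the argument that is genuinely $K$-theoretic and hard, and the one concrete thing you say about that part is not right.

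Specifically, your description of rule (4) misidentifies the obstruction. When inserting $x$ into row $R$ bumps $y$ but leaves $R$ unchanged, the reason is not that ``$y$ already appears below in the next row'': it is that placing $x$ in $y$'s cell would violate increasingness either with the entry immediately to its left in $R$ (which then equals $x$) or with the entry directly above that cell in the previously processed row --- a row that sits to the \emph{right} of $R$ in the reading word, not below it. The first sub-case can be absorbed into the single-row analysis (Lemma~\ref{lem:bump}(1), where the case $r_{i-1}=y$ is treated separately using a braid move inside the row). The second sub-case is the real crux: one must show $xRS\equiv xR'S$, where $S$ is the row above and $R'$ differs from $R$ in a single cell whose new entry duplicates the entry of $S$ directly above it. This is Lemma~\ref{lem:bump}(2)--(3): after stripping away the portions of both rows on either side of the critical column (each stripping step itself invoking part (1)), it reduces to the five-letter identity $42313\equiv 42413$, and that verification is where $xyx\equiv yxy$ actually enters. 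A reader following your prescription --- ``swap the new $y$ with the identical $y$ already present in the next row'' --- would be looking at the wrong pair of rows and would not find the required sequence of moves, so as written the case analysis would fail; the proof is incomplete until this configuration is identified and the two-row identity is established.
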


Hence Hecke insertion equivalence implies $K$-Knuth equivalence. The converse, however, is false: $K$-Knuth equivalent words may not be insertion equivalent. So unlike Knuth equivalence classes, $K$-Knuth equivalence classes may contain more than one insertion tableau.

\begin{example} \label{example:3124}
Let $w=1342$ and $w'=13422$.  Clearly $w$ and $w'$ are $K$-Knuth equivalent. Notice, however, that $$P(w)=\young(124,3) 
\hspace{0.3cm}
\mbox{and}
\hspace{0.3cm}
P(w')=\young(124,34).$$
\end{example}
We define
a recording tableau $Q(w)$ for the Hecke insertion of a word $w$,
in analogy to the recording tableau for Schensted insertion,
which allows one to uniquely recover an inserted word $w$ 
from the pair $(P(w),Q(w))$ via \textit{reverse Hecke insertion}. This definition is implicit in~\cite{BuKr06} and explained in detail in~\cite{PaPy14}.
We will not use this notion, but we note it for completeness.

\subsection{Reading Words}\label{sec:reading_word}

The Hecke insertion algorithm assigns
to each word an increasing tableau.
In this section, we describe a way
to associate to each increasing tableau
a certain set of words, called \emph{reading words}
for the tableau. A tableau can have many reading
words, and they will all be $K$-Knuth equivalent.

Let~$T$ be an increasing tableau.
The most commonly used reading word
for~$T$ is the \emph{row word} for~$T$,
written~$\row(T)$, which is obtained
by reading the entries of~$T$ 
from left to right along each row, 
starting from the bottom row
and moving upward.
Similarly, the \emph{column word} for~$T$,
written~$\col(T)$, is obtained
by reading the entries of~$T$ from
bottom to top along each column,
starting from the first column
and moving rightward.

More generally, a reading word of $T$ is any word $w$ listing the labels of the boxes of $T$ in any order for which the letter of $w$ corresponding to $\alpha$ appears before the letter of $w$ corresponding to $\beta$ whenever $\alpha$ is weakly southwest of $\beta$ in $T$.

\begin{example}
If 
\[
T=\young(1345,245,4)
\] 
then $\mathfrak{row}(T)=42451345$ and $\col(T)=42143545$. 
Two more valid reading words for $T$ are $42145345$ and $42413545$
\end{example}

\begin{proposition}\cite[Lemma 5.4]{BuSa13}
If~$w$ and $v$ are two reading words of an increasing tableau~$T$, then $w\equiv v$.
\end{proposition}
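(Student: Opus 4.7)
The plan is to reinterpret reading words as linear extensions of a partial order on the boxes of $T$, reduce the statement to a local claim about swapping two adjacent letters whose boxes are incomparable, and then realize each such swap by a short sequence of $K$-Knuth moves.

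Define $\preceq$ on the boxes of $T$ by $\alpha\preceq\beta$ iff $\alpha$ is weakly southwest of $\beta$. A word $w$ is a reading word of $T$ precisely when its letters list the boxes of $T$ in the order of some linear extension of $\preceq$, and two distinct boxes $(i_1,j_1)$ and $(i_2,j_2)$ are $\preceq$-incomparable iff one is strictly northwest of the other. Invoking the standard fact that the graph of linear extensions of a finite poset is connected under transpositions of adjacent incomparable pairs, it suffices to prove the following local claim: if $w$ and $w'$ are reading words differing by swapping two adjacent letters $a$ and $b$ whose corresponding boxes $\alpha$ and $\beta$ are $\preceq$-incomparable, then $w\equiv w'$.

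For the local claim, by renaming if needed assume $\alpha=(i_1,j_1)$ is strictly northwest of $\beta=(i_2,j_2)$, so $a<b$. When $T$ is straight, the two corner boxes $\gamma_1=(i_2,j_1)$ and $\gamma_2=(i_1,j_2)$ also lie in $T$; their labels $c_1=T(\gamma_1)$ and $c_2=T(\gamma_2)$ satisfy $a<c_1<b$ and $a<c_2<b$, and the relations $\gamma_1\prec\alpha,\beta\prec\gamma_2$ force $c_1$ to appear before both $a$ and $b$ in $w$ and $c_2$ to appear after both. In the base case where $c_1$ is immediately before $ab$, the $K$-Knuth relation $yxz\equiv yzx$ with $(y,x,z)=(c_1,a,b)$ yields $w'$ directly; symmetrically, if $c_2$ is immediately after $ab$, the relation $xzy\equiv zxy$ with $(x,z,y)=(a,b,c_2)$ does the job.

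The main obstacle is reducing the general case to one of these base configurations, since between the nearest helper and the pair $ab$ there may be several other letters that need to be moved out of the way. I would induct on the number of such intervening letters, peeling them off one at a time by applying appropriate $K$-Knuth moves (allowing intermediate words that are not themselves reading words of $T$). The case analysis for each intermediate letter hinges on where its labeling box sits relative to the quadrilateral $\alpha,\beta,\gamma_1,\gamma_2$, and uses the remaining two relations $x\equiv xx$ and $xyx\equiv yxy$ when the three-letter bumping moves alone do not suffice. For skew $T$, one of $\gamma_1,\gamma_2$ may fail to lie in $T$, but the skew condition together with the $\preceq$-incomparability of $\alpha,\beta$ still forces at least one usable helper letter on the appropriate side of $ab$ in $w$, and the same pushing argument closes out that case.
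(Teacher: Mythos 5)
The paper does not actually prove this proposition; it is imported wholesale from Buch and Samuel \cite{BuSa13}, so there is no in-paper argument to measure yours against. Judged on its own terms, your setup is sound: reading words are exactly the linear extensions of the ``weakly southwest'' order on boxes, linear extensions of a finite poset are connected by adjacent transpositions of incomparable pairs, two boxes are incomparable precisely when one is strictly northwest of the other (so $a<b$ strictly, since $T$ is increasing), and the corner boxes $\gamma_1=(i_2,j_1)$ and $\gamma_2=(i_1,j_2)$ carry labels strictly between $a$ and $b$ and are forced to the correct sides of the pair in any reading word. Your two base configurations are handled correctly by $yxz\equiv yzx$ and $xzy\equiv zxy$. (A side remark: both corners exist even when $T$ is skew of shape $\nu/\mu$ --- if $(i_2,j_1)$ or $(i_1,j_2)$ lay in $\mu$, then so would $\alpha$, because $\mu$ is a straight shape --- so the separate skew case you worry about at the end does not arise.)

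The genuine gap is the entire general case of your local claim. The letters occurring between $c_1$ and the pair $ab$ come from boxes that are only weakly constrained (not weakly northeast of $\alpha$, not weakly southwest of $\gamma_1$), so their values may be below $a$, between $a$ and $b$, above $b$, or equal to any of $a$, $b$, $c_1$, $c_2$. Every $K$-Knuth move requires a specific three-letter window with strict inequalities, or an exact repetition pattern, so there is no single move that commutes an arbitrary intervening letter past $c_1$ or past $ab$; ``peeling them off one at a time'' is precisely the combinatorial heart of the lemma, and you supply no case analysis, no invariant preserved by the peeling, and no well-founded induction showing it always terminates in a base configuration. As written, the argument reduces the proposition to an unproved claim of essentially the same difficulty. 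To repair it you would need to carry out that analysis in full, or change strategy --- for instance, argue that every reading word of a straight tableau $T$ Hecke-inserts to $T$ and appeal to the fact that insertion equivalence implies $K$-Knuth equivalence, or follow the inductive proof in \cite{BuSa13}.
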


Let $T$ and $T'$ be two increasing tableaux.  If $\row(T)\equiv\row(T')$, we say that $T$ is \textit{$K$-Knuth equivalent} to $T'$ and write $T\equiv T'$, 
extending the $K$-Knuth equivalence relation to
the set of increasing tableaux.

\subsection{$K$-Jeu de Taquin}
The classical jeu de taquin (jdt) algorithm 
defines an equivalence relation on standard skew tableaux. Recall that a tableau $T$ with $n$ boxes is \textit{standard} if the entries $1,2,\ldots,n$ appear exactly once.

\begin{example}
Of the fillings below, only the second is standard.
\[
\ytableausetup{boxsize=\boxsize}
\begin{ytableau}
1 & 2 & 3 & 5 \\
3 & 4 & 5 & 6\\
7 & 8
\end{ytableau}\qquad
\begin{ytableau}
1 & 2 & 3 & 5 \\
4 & 6 & 7 & 9\\
8 & 10
\end{ytableau}
\]
\end{example}

In this section, we give a $K$-theoretic extension
of jdt to increasing tableaux introduced in \cite{ThYo07}, $K$-jdt, closely following the exposition of~\cite{BuSa13}. The $K$-jdt algorithm gives an alternative method for testing tableaux equivalence.

\begin{definition}
We say two boxes $\alpha,\beta\in\N\times\N$ 
are \textit{neighbors}
if $\alpha$ is directly above, below,
left of, or right of $\beta$.
Given a tableau $T$ and two entries $s$ and $s'$
of~$T$, define a new tableau $\swap_{s,s'}(T)$ of the same shape by
\[
\swap_{s,s'}(T):\alpha\mapsto
\begin{cases}
 s' &\text{if } T(\alpha)=s \text{ and } T(\beta)=s' 
 \text{ for some neighbor } \beta \text{ of } \alpha; \\ 
 s &\text{if } T(\alpha)=s' \text{ and } T(\beta)=s \text{ for some neighbor }\beta\text{ of } \alpha; \\ 
 T(\alpha) &\text{otherwise}.
\end{cases}
\]
\end{definition}

To define $K$-jdt, we allow the label $\bullet$ in addition to labels in $\mathbb{N}$. 
 
\begin{example}
Two examples of swaps may be found below.
\[
\young(12\bullet 4,\bullet 3,3)
\quad\underrightarrow{{\swap}_{\bullet,3}}\quad
\young(12\bullet 4,3\bullet ,\bullet)\]
\[\young( 124,34\bullet,\bullet)
\quad\underrightarrow{{\swap}_{\bullet,4}}\quad
\young(12\bullet,3\bullet 4,\bullet)
\]
\end{example}

A complete $K$-jdt move or slide consists of choosing an initial set of empty boxes to mark with $\bullet$ followed by a series of the swaps described above. Boxes marked with $\bullet$ begin either all weakly northwest of $T$ or all weakly southeast of $T$ and will move across the $T$ as swaps are performed during $K$-jdt. Each K-jdt move will be categorized as either a forward slide (where the boxes with $\bullet$ begin weakly northwest of $T$) or a reverse slide (where the boxes with $\bullet$ begin weakly southeast of $T$). We first define forward slides.

\begin{definition}[\cite{BuSa13}]
Let $T$ be an increasing tableau of shape $\nu/\lambda$ on letters in the interval $[a,b]$. Let $C\subset\lambda$ be a subset of the maximally southeast boxes, and mark the boxes in $C$ with $\bullet$. (So all boxes marked with $\bullet$ are weakly northwest of all boxes of $T$.)
We define the \textit{forward slide of $T$} starting from $C$ to be the composition of swaps 

$$\swap_{b,\bullet}(\ldots(\swap_{a+2,\bullet}(\swap_{a+1,\bullet}(\swap_{a,\bullet}(T))))\ldots).$$

Similarly, let $\widehat{C}\subset\N\times\N\setminus\nu$ be a subset of the maximally northwest boxes of 
$\N\times\N\setminus\nu$, and mark the boxes of $\widehat{C}$ with $\bullet$. (So all boxes marked with $\bullet$ are weakly southeast of all boxes in $T$.) Then the \textit{reverse slide of $T$} starting from $\widehat{C}$ is 
the composition of swaps 
$$\swap_{a,\bullet}(\swap_{a+1,\bullet}(\ldots(\swap_{b-1,\bullet}(\swap_{b,\bullet}(T))))\ldots).$$
\end{definition}

\begin{example}
The examples below give one complete forward $K$-jdt slide
and one complete reverse $K$-jdt slide, showing the sequence
of swaps performed during the slide.
\[
\ytableausetup{boxsize=0.39cm}
\begin{ytableau}
\none & \bullet & 1 & 3 \\
\bullet & 2 & 4 \\
2 & 3
\end{ytableau}\rightarrow 
\begin{ytableau}
\none & 1 & \bullet & 3 \\
\bullet & 2 & 4 \\
2 & 3
\end{ytableau}\rightarrow 
\begin{ytableau}
\none & 1 & \bullet & 3 \\
2 &\bullet &  4 \\
\bullet & 3
\end{ytableau}\rightarrow 
\begin{ytableau}
\none & 1 & 3 & \bullet  \\
2 & 3 &  4 \\
3 & \bullet 
\end{ytableau}\rightarrow 
\begin{ytableau}
\none & 1 & 3 & \bullet  \\
2 & 3 &  4 \\
3 & \bullet 
\end{ytableau}=
\begin{ytableau}
\none & 1 & 3  \\
2 & 3 &  4 \\
3 
\end{ytableau}
\]

\hspace{2cm}
\[
\ytableausetup{boxsize=0.39cm}
\begin{ytableau}
1 & 2 & 5 \\
3 & 5 & \bullet \\
4
\end{ytableau}\rightarrow 
\begin{ytableau}
1 & 2 & \bullet \\
3 & \bullet & 5 \\
4
\end{ytableau}\rightarrow 
\begin{ytableau}
1 & 2 & \bullet \\
3 & \bullet & 5 \\
4
\end{ytableau}\rightarrow 
\begin{ytableau}
1 & 2 & \bullet \\
\bullet & 3 & 5 \\
4
\end{ytableau}\rightarrow 
\begin{ytableau}
1 & \bullet & 2 \\
\bullet & 3 & 5 \\
4
\end{ytableau}\rightarrow 
\begin{ytableau}
\bullet & 1 & 2 \\
1 & 3 & 5 \\
4
\end{ytableau}=
\begin{ytableau}
\none & 1 & 2 \\
1 & 3 & 5 \\
4
\end{ytableau}
\]
\end{example}

It is apparent from the definitions of $\swap$
that tableaux resulting from forward and reverse $K$-jdt slides remain increasing along rows and columns.
It is also apparent that, by design, forward and reverse moves are inverses. 
Furthermore, one can use forward moves to transform a skew shape into a straight shape and reverse moves to do the opposite. 

In the same way that the $K$-Knuth moves
give an equivalence relation on words,
$K$-jdt slides give an equivalence relation on tableaux.

\begin{definition}[\cite{BuSa13}]
We say that two increasing tableaux $S$ and $T$ are \textit{$K$-jeu de taquin equivalent} if $S$ can be obtained by applying a sequence of forward and reverse $K$-jeu de taquin slides to $T$ .
\end{definition}

The importance of $K$-jdt equivalence lies in the following theorem, proved in~\cite{BuSa13}.

\begin{theorem} \cite[Theorem 6.2]{BuSa13}
For tableaux $T$ and $T'$, $\row(T)\equiv\row(T')$ if and only if $T$ and $T'$ are $K$-jdt equivalent.
\end{theorem}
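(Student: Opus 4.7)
The plan is to prove the two implications separately; both are fundamentally local in nature.

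For the forward direction ($K$-jdt equivalence implies $K$-Knuth equivalence of row words), it suffices to verify that a single forward $K$-jdt slide preserves the $K$-Knuth class of some reading word, since reverse slides are inverses of forward slides and since any two reading words of a given increasing tableau are already $K$-Knuth equivalent by the Proposition cited from \cite[Lemma 5.4]{BuSa13}. A forward slide is a composition of swaps $\swap_{i,\bullet}$, so I would track how each such swap alters a carefully chosen reading word and show the change is a bounded sequence of $K$-Knuth moves. The two new $K$-Knuth relations $x \equiv xx$ and $xyx \equiv yxy$ are precisely what is needed to handle the merging phenomena unique to $K$-jdt, namely when a $\bullet$ is absorbed by an equal neighbor (losing a box from the shape) or when two adjacent equal entries appear along the slide path.

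For the reverse direction ($K$-Knuth equivalence of row words implies $K$-jdt equivalence), the strategy is to realize each $K$-Knuth move by explicit $K$-jdt slides. Given a word $w$, I would first build a skew tableau $S_w$ on an antidiagonal shape whose row reading word is $w$; this is automatically an increasing tableau because each letter sits in its own row and column. For each of the four $K$-Knuth relations, I would then exhibit a short, explicit sequence of forward and reverse $K$-jdt slides taking $S_w$ to $S_{w'}$ whenever $w \equiv w'$ by a single application of that relation. Chaining these slides along a $K$-Knuth derivation from $\row(T)$ to $\row(T')$ produces a $K$-jdt equivalence between the associated antidiagonal skew tableaux, and one then observes that rectifying $S_{\row(T)}$ recovers $T$ (which can be justified by applying the already-established forward direction to compare $\row$-words), and similarly for $T'$, completing the argument.

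The main obstacle is the reverse direction, and specifically the two new relations, because they change the number of letters in the word, and hence the number of boxes in the associated skew tableau. The $K$-theoretic nature of $K$-jdt, namely that slides may reduce the number of boxes via swap absorption, is used in an essential way to realize $x \equiv xx$ and $xyx \equiv yxy$; producing a clean, general sequence of slides that performs each such relation on an arbitrary antidiagonal skew tableau and then undoes it is the technical heart of the proof. The forward direction is less subtle but still requires a careful case analysis on the local configurations encountered by $\bullet$ during a swap; I would organize these cases by induction on the length of the slide path, using that swaps occurring far from the current location of $\bullet$ leave the reading word essentially unchanged.
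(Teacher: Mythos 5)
First, a point of orientation: the paper does not prove this theorem at all --- it is imported directly from \cite[Theorem 6.2]{BuSa13} --- so there is no in-paper argument to compare against. Measured against the proof in that source, your architecture (a single slide preserves the $K$-Knuth class of the reading word in one direction; antidiagonal skew tableaux realizing each $K$-Knuth move by explicit slides in the other) is essentially the standard one.

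There is, however, a genuine gap in the final step of your reverse direction. You assert that rectifying the antidiagonal tableau $S_{\row(T)}$ recovers $T$, ``justified by applying the already-established forward direction to compare $\row$-words.'' The forward direction only tells you that any rectification $R$ of $S_{\row(T)}$ satisfies $\row(R)\equiv\row(S_{\row(T)})=\row(T)$, i.e.\ that $R$ and $T$ are $K$-Knuth equivalent as tableaux. That does not imply $R=T$: distinct straight tableaux can be $K$-Knuth equivalent --- this is the entire reason URTs are a nontrivial notion; see Example~\ref{example:3124}, where $\young(124,3)$ and $\young(124,34)$ are equivalent but unequal. Nor can you instead conclude from $\row(R)\equiv\row(T)$ that $R$ and $T$ are $K$-jdt equivalent, since that is exactly the implication you are in the middle of proving; the argument would be circular. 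What is needed here is a direct lemma: some explicit rectification order applied to $S_{\row(T)}$ yields precisely $T$ (for instance, sliding the antidiagonal segments into place row by row from the top, checking that no box-merging occurs because the segments are the rows of an increasing tableau). With that lemma, $T$ is $K$-jdt equivalent to $S_{\row(T)}$ and the rest of your chain goes through. The remaining pieces --- the swap-by-swap case analysis in the forward direction and the explicit slide sequences realizing $x\equiv xx$ and $xyx\equiv yxy$ --- are, as you say, the technical bulk, but your plan for them is sound.
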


Therefore, $K$-jdt equivalence of tableaux is the same as $K$-Knuth equivalence of words.

\subsection{Unique Rectification Targets}
Applying forward $K$-jdt slides to a skew tableau~$T$ will eventually result in a straight tableau called a \textit{$K$-rectification} of~$T$. The \emph{rectification order} is the choice
of the placements of~$\bullet$'s for each
forward $K$-jdt slide.
In contrast to the classical theory of jdt,
different rectification orders may result
in different $K$-rectifications. 
In other words, varying the initial placements of $\bullet$'s
during $K$-jdt slides may result in different straight tableaux.

\begin{example}
Here is an example of how different rectification orders may produce different $K$-rectifications.
The tableau
\[
\ytableausetup{boxsize=0.39cm}
\begin{ytableau}
\none & \none & \none & 2 \\
\none & \none & 2 \\
1 & 3 & 4
\end{ytableau}
\]
has the rectifications
\hspace{2cm}
\[
\begin{ytableau}
\none & \none & \bullet & 2 \\
\none & \none & 2 \\
1 & 3 & 4
\end{ytableau}\rightarrow 
\begin{ytableau}
\none & \none & 2 \\
\none & \bullet & 4 \\
1 & 3 
\end{ytableau}\rightarrow 
\begin{ytableau}
\none & \none & 2 \\
\bullet & 3 & 4 \\
1 
\end{ytableau}\rightarrow 
\begin{ytableau}
\none & \bullet & 2 \\
1 & 3 & 4 \\
\end{ytableau}\rightarrow 
\begin{ytableau}
\bullet & 2 & 4\\
1 & 3 \\
\end{ytableau}\rightarrow 
\begin{ytableau}
1 & 2 & 4\\
3 \\
\end{ytableau}
\]
and
\[
\ytableausetup{boxsize=0.39cm}
\begin{ytableau}
\none & \none & \none & 2 \\
\none & \bullet & 2 \\
1 & 3 & 4
\end{ytableau}\rightarrow 
\begin{ytableau}
\none & \none & \none & 2 \\
\bullet & 2 & 4 \\
1 & 3
\end{ytableau}\rightarrow 
\begin{ytableau}
\none & \none & \bullet & 2 \\
1 & 2 & 4 \\
3
\end{ytableau}\rightarrow 
\begin{ytableau}
\none & \bullet & 2 \\
1 & 2 & 4 \\
3
\end{ytableau}\rightarrow 
\begin{ytableau}
\bullet & 2 & 4\\
1 & 4 \\
3
\end{ytableau}\rightarrow 
\begin{ytableau}
1 & 2 & 4\\
3 & 4 
\end{ytableau},\]
resulting in different tableaux. Note that these tableaux are the same as in Example 2.11.
\end{example}

In some instances the $K$-rectification
may be unique, motivating the following
definition.

\begin{definition}\cite[Definition 3.5]{BuSa13}
An increasing tableau $U$ is a \textit{unique rectification target} (URT) if, for every increasing tableau $T$ that has $U$ as a rectification, $U$ is the only rectification of $T$.
\end{definition}

Equivalently, an increasing tableau is a URT if it is the only tableau in its $K$-Knuth equivalence class. 
The literature gives several classes of URTs,
which we summarize below.

\begin{definition}
A \textit{minimal} tableau is a tableau in which each box is filled with the smallest positive integer that will make the filling a valid increasing tableau.
\end{definition}

\begin{example}
The following tableau is minimal of shape $(4,3,3,1)$:
\begin{gather*}
\begin{ytableau} 
1 & 2 & 3 & 4\\ 
2 & 3 & 4\\ 
3 & 4 & 5\\ 
4\\
\end{ytableau}.
\end{gather*}
\end{example}

\begin{proposition}\cite[Corollary 4.7]{BuSa13}\label{prop:minimal_URT}
Every minimal tableau is a URT.
\end{proposition}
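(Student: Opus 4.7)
The plan is to induct on $|\lambda|$, with the base case $|\lambda|\leq 1$ trivial. In the inductive step, $U$ is the minimal tableau of shape $\lambda$ (so $U(i,j)=i+j-1$) and $T$ is an increasing tableau with $T\equiv U$; the goal is to show $T=U$.

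First I would establish that $T$ has the same shape as $U$. The standard invariants $\lis$ and $\lds$ of $K$-Knuth equivalence, applied to any reading word, force $T$ to have $\lambda_1$ columns and the same number of rows as $U$, but this alone does not yet pin down the full Young diagram. To extract $\lambda$ entirely, I would apply the same invariants to the subwords obtained by restricting a reading word to the alphabet interval $[i,n]$ for each $i$. Because $U$ is minimal, the restriction of $\row(U)$ to $[i,n]$ is itself a reading word of the minimal tableau of shape $\lambda^{(i)}:=\{(r,c)\in\lambda:r+c\geq i+1\}$, so the $\lis$ and $\lds$ values on each $\lambda^{(i)}$ should recover the antidiagonal ``slices'' of $\lambda$ and hence $\lambda$ itself.

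Second, let $n=\max_i(i+\lambda_i-1)$ be the largest entry of $U$. In $U$, the letter $n$ occupies exactly the corners of $\lambda$ on the antidiagonal $\{(i,j):i+j=n+1\}$; the set-of-letters invariant gives that $T$ uses exactly the alphabet $[n]$, and the alphabet-restricted shape analysis above identifies the boxes of $T$ labeled $n$ as precisely these corners. Deleting these boxes from $T$ and $U$ yields smaller tableaux $T'$ and $U'$, where $U'$ is minimal of strictly smaller shape $\lambda'=\lambda\setminus\{(i,j):i+j=n+1\}$. One checks that $T'\equiv U'$: their reading words arise from those of $T$ and $U$ by removing all $n$'s, and this removal is compatible with $K$-Knuth equivalence because $n$ is the maximum letter and can be absorbed via $n\equiv nn$ and freely commuted past smaller letters using the first two $K$-Knuth relations (instantiating $z=n$). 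Induction then gives $T'=U'$, and hence $T=U$.

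The main obstacle is the first step---upgrading the $\lis$/$\lds$ bounds on the extremal row and column to a determination of the entire shape $\lambda$. If the alphabet-restricted invariants turn out not to correspond cleanly to straight-shape slices of $T$, my fallback would be a direct $K$-jdt argument: minimality of $U$ means every swap $\swap_{s,\bullet}$ encountered during a forward slide is forced, with no genuine choice of neighbor ever available, so any rectification of a skew form of $U$ must terminate at $U$ itself; by Theorem 2.15 this uniqueness of rectification is equivalent to $U$ being the only tableau in its $K$-Knuth equivalence class.
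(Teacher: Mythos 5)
The paper offers no proof of this proposition: it is quoted from Buch and Samuel \cite[Corollary 4.7]{BuSa13} (where it follows from their analysis of dominant Hecke permutations), so your sketch has to stand on its own, and as written it has genuine gaps. The central one is the step you yourself flag: you never establish that $T\equiv M_\lambda$ forces $T$ to have shape $\lambda$, let alone that the restricted shapes $T|_{[i,n]}$ are determined as subsets of the plane. The invariants you invoke, $\lis$ and $\lds$ of $w|_{[i,n]}$, measure the first row and first column of the \emph{rectification} of the skew tableau $T|_{[i,n]}$, not the antidiagonal slices of the shape of $T$; since $K$-Knuth equivalent tableaux can have different shapes and even different numbers of boxes (Example~\ref{example:3124}), ``should recover $\lambda$'' is a hope, not an argument. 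Your second step inherits the same problem: even granting that $T$ has shape $\lambda$, this does not locate the letter $n$. For $\lambda=(3,1)$, the filling with first row $1,2,3$ and a $3$ in box $(2,1)$ is increasing, of shape $\lambda$, and uses the alphabet $[3]$, yet places $n=3$ off the antidiagonal $i+j=n+1$; excluding such fillings needs a further invariant argument (outer hook, Hecke permutation, or restriction to the lower interval $[1,n-1]$, which at least stays within straight shapes so induction applies cleanly), and that is precisely what is missing.

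The fallback is not salvageable as stated. In $K$-rectification the only freedom is the choice of the initial set of $\bullet$'s at each forward slide; the tableau being slid is an arbitrary skew tableau that merely happens to rectify to $U$, and it is not minimal, so ``minimality of $U$ forces every swap'' has no content --- the swaps within a single slide are always determined once the starting corners are chosen, and nothing about the target constrains that choice. If you want a self-contained proof along your inductive lines, the workable induction is on $T|_{[1,i]}$ (giving $T|_{[1,n-1]}=M_{\lambda'}$ by induction), followed by an argument that identifies which addable corners of $\lambda'$ receive the label $n$; that last identification is exactly the part your sketch leaves open.
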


\begin{definition}
A \textit{superstandard} tableau is a standard tableau that fills the first row with $1,2, \ldots, \lambda_1$, the second row with $\lambda_1 +1, \lambda_1 +2, \ldots, \lambda_1+\lambda_2$, etc., where $\lambda_i$ is the length of the $i^{\text{th}}$ row of the tableau. 
\end{definition}

\begin{example}
The following tableau is superstandard of shape $(4,3,3,1)$:
\begin{gather*}
\begin{ytableau} 
1 & 2 & 3 & 4\\ 
5 & 6 & 7\\ 
8 & 9 & 10\\ 
11\\
\end{ytableau}.
\end{gather*}
\end{example}

\begin{proposition}
\label{superstandardURT}
Every superstandard tableau is a URT.
\end{proposition}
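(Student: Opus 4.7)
The plan is to show that any increasing tableau $T$ with $T\equiv U$ must equal $U$, via basic invariants of the $K$-Knuth class together with induction on the number of rows $\ell$ of $\lambda$.

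First I would collect two invariants of the class. Each of the four defining $K$-Knuth relations preserves the underlying set of letters appearing in a word, so $T$ uses exactly the alphabet $[n]$, where $n=|\lambda|$. A direct case check also shows that the length of the longest strictly increasing (respectively, strictly decreasing) subsequence of a reading word is an invariant, and for an increasing tableau these coincide with the first row length and first column length. Hence $T$ has the same first row length $\lambda_1$ and first column length $\ell$ as $U$.

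Next I would induct on $\ell$. The base case $\ell=1$ is immediate: a one-row increasing tableau with entry set $[\lambda_1]$ is determined. For the inductive step, the heart of the argument is the claim that the letters $\{1,2,\ldots,\lambda_1\}$ occupy precisely the top row of $T$. Granted this claim, restricting $T$ to the alphabet $\{\lambda_1+1,\ldots,n\}$ and relabeling produces an increasing tableau $T'$ of shape $(\lambda_2,\ldots,\lambda_\ell)$ whose row word is $K$-Knuth equivalent to that of the superstandard tableau $U'$ obtained from $U$ by removing its top row; the inductive hypothesis then yields $T'=U'$ and thus $T=U$.

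The main obstacle is the top-row claim. The letter $1$ is forced to sit at $(1,1)$ in any increasing tableau on $[n]$, and the first row of $T$ already has length $\lambda_1$. To show that $2,\ldots,\lambda_1$ fill out the remainder of that first row rather than descending into a lower row, I would pass to the $K$-jeu de taquin characterization of equivalence: if some $j\le\lambda_1$ sat strictly below row $1$ of $T$, a carefully chosen reverse slide of $U$ into a skew shape followed by a forward rectification in a different order would produce a straight tableau distinct from $U$, contradicting the rigidity imposed by the invariants above. A cleaner route, paralleling the Buch--Samuel proof that minimal tableaux are URTs (Proposition~\ref{prop:minimal_URT}), is to track the positions of $1,\ldots,\lambda_1$ through an explicit sequence of $K$-Knuth moves and verify that their insertion positions in the running tableau never leave the top row. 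This rigidity step is the technical crux; everything else is bookkeeping.
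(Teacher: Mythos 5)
Your overall skeleton (induction on the number of rows, with the crux being that $1,\ldots,\lambda_1$ fill exactly the top row of any $T\equiv U$) is sound, but as written the crux is not proved, and neither of the two routes you sketch for it works. The first route is circular: you propose to derive a contradiction with ``the rigidity imposed by the invariants above,'' but the invariants you have collected (alphabet, first-row length, first-column length) are shared by many distinct tableaux, so exhibiting a second rectification of some skew tableau contradicts nothing that you have established. The second route (``track the positions of $1,\ldots,\lambda_1$ through an explicit sequence of $K$-Knuth moves'') is not an argument either: individual $K$-Knuth moves act on words, not on boxes of a tableau, and the whole difficulty of the $K$-theoretic setting is precisely that equivalent words may insert into different tableaux. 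So the proposal has a genuine gap at its central step.

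That said, the gap is closable with tools already in the paper, and more cheaply than you suggest. By Proposition~\ref{restrictedalpha}, $T\equiv U$ implies $T|_{[1,\lambda_1]}\equiv U|_{[1,\lambda_1]}$, and $U|_{[1,\lambda_1]}$ is the single row $1,2,\ldots,\lambda_1$, which is a minimal tableau and hence a URT by Proposition~\ref{prop:minimal_URT}. Therefore $T|_{[1,\lambda_1]}$ equals that single row; combined with the fact that the first row of $T$ has length exactly $\lambda_1$ (Proposition~\ref{lis_lds_equality} and Theorem~\ref{row_col_lis_lds}), this is precisely your top-row claim, and your restriction-and-induct step then goes through. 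Note also that the paper takes an entirely different route: a superstandard tableau is right-alignable (every entry of each row exceeds every entry of the row above), so the proposition is immediate from Theorem~\ref{theorem1}, whose proof via repetitive reading words covers a strictly larger family (for instance all rectangular tableaux) in one stroke; your inductive peeling, once repaired as above, is more elementary but is special to the superstandard case.
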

Proposition \ref{superstandardURT} is a corollary of~\cite[Theorem 3.7]{ThYo10};
it will also follow from Theorem~\ref{theorem1}.

Buch and Samuel proved in~\cite{BuSa13} that certain URTs can be added to minimal hooks to generate new URTs. We introduce this result with a few preliminary definitions.

\begin{definition}
A \textit{fat hook} is a partition of the form $(a^b, c^d)$, where $a,b,c,d$ are nonnegative integers with $a \geq c$.
\end{definition}

\begin{example}
The partition below is a fat hook of shape $(4^2, 2^3)=(4,4,2,2,2)$.
\begin{center}
\yng(4,4,2,2,2)
\end{center}
\end{example}

\begin{definition}
Let $M_{\lambda}$ be the minimal increasing tableau corresponding to a fat hook $\lambda = (a^b, c^d)$, and let $U$ be an increasing tableau. We say $U$ \textit{fits in the corner of} $M_{\lambda}$ if $U$ has at most $d$ rows, at most $a-c$ columns, and all integers contained in $U$ are strictly larger than all integers contained in $M_{\lambda}$. 
\end{definition}

In other words, $U$ fits in the corner of $M_{\lambda}$ if the entries of $U$ are strictly greater than the entries of $M_{\lambda}$ and if positioning $U$ in the corner of the hook results in an increasing tableau.

\begin{theorem}\cite[Theorem 6.9]{BuSa13}
Let $\lambda$ be a fat hook, and let $U$ be any unique rectification target that fits in the corner of $M_{\lambda}$. Then $M_{\lambda} \cup U$ is a unique rectification target.
\end{theorem}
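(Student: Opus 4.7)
The plan is to decouple the labels of $M_\lambda$ from those of $U$ in any straight tableau $T$ with $T\equiv M_\lambda\cup U$, and then invoke the URT properties of $M_\lambda$ and $U$ separately. Let $n$ be the largest label in $M_\lambda$, so every label of $M_\lambda$ lies in $[n]$ while every label of $U$ is strictly greater than $n$.

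The key technical step, and main obstacle, is the following small--large separation lemma. For any word $w$, let $w_S$ (resp.\ $w_L$) denote the subword of letters $\leq n$ (resp.\ $>n$); I claim that if $w\equiv w'$, then $w_S\equiv w'_S$ and $w_L\equiv w'_L$. I would prove this by checking each of the four $K$-Knuth relations against the possible small/large configurations of its three affected letters. The inequalities $x<y<z$ in rules 1 and 2 severely restrict the mixed patterns, and in each case either both subwords are literally unchanged or the affected subword transforms by the same relation. Rule 3 changes only the subword on its side of the threshold. The delicate case is rule 4 ($xyx\equiv yxy$) with $x$ and $y$ on opposite sides of $n$: here both subwords change, but each by a single application of rule 3 (namely $xx\equiv x$). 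Hence $K$-Knuth equivalence of subwords is preserved.

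Applying the lemma to $\row(T)\equiv\row(M_\lambda\cup U)$, and using that $\row(M_\lambda\cup U)_S=\row(M_\lambda)$ and $\row(M_\lambda\cup U)_L=\row(U)$, I get $\row(T)_S\equiv\row(M_\lambda)$ and $\row(T)_L\equiv\row(U)$. Because $T$ is increasing, in each row the $\leq n$ entries appear to the left of the $>n$ entries, so the cells of $T$ with entries $\leq n$ form a straight subtableau $T|_{\leq n}$ whose row word is $\row(T)_S$, while the complementary skew tableau $T|_{>n}$ has row word $\row(T)_L$. Hence $T|_{\leq n}\equiv M_\lambda$, and by Proposition~\ref{prop:minimal_URT} the minimal tableau $M_\lambda$ is a URT, forcing $T|_{\leq n}=M_\lambda$. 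In particular, the shape $\mu$ of $T$ contains $\lambda$.

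To finish, I would use the invariants $\lis$ and $\lds$ of the $K$-Knuth class, which coincide with the first row and first column lengths of any straight tableau in the class. For $M_\lambda\cup U$ these are $a$ and $b+d$, respectively, since $U$ fits in the corner (at most $a-c$ columns beyond the $c$-rows, at most $d$ rows deep). Hence $\mu$ fits inside the $a\times(b+d)$ bounding rectangle. Combined with $\mu\supseteq\lambda=(a^b,c^d)$, this forces $\mu/\lambda$ to be contained in the $d\times(a-c)$ corner, and moreover $\mu/\lambda$ is itself a straight shape within that corner because $\mu$ is a partition. Thus $T|_{>n}$ is a straight increasing tableau inside the corner that is $K$-Knuth equivalent to $U$, so URTness of $U$ gives $T|_{>n}=U$. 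Therefore $T=M_\lambda\cup U$, completing the proof.
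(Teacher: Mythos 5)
Your argument is correct. Note that the paper itself does not prove this statement---it is quoted from Buch and Samuel---so there is no in-paper proof to compare against; judged on its own, your proof is sound and self-contained. Your ``small--large separation lemma'' is exactly Proposition~\ref{restrictedalpha}(1) applied to the intervals $[1,n]$ and $[n+1,\infty)$, so you could simply cite that rather than re-verify the four relations (your case analysis of the relations, including the delicate mixed $xyx\equiv yxy$ case splitting into two applications of $x\equiv xx$, is the standard proof of that proposition and checks out). The remaining steps are all valid: the boxes of a straight increasing tableau with entries $\leq n$ are closed under moving weakly northwest, so $T|_{\leq n}$ is straight and URTness of the minimal tableau $M_\lambda$ forces $T|_{\leq n}=M_\lambda$; the first-row and first-column lengths of $T$ equal $\lis$ and $\lds$ of the class (equivalently, use Proposition~\ref{prop:outer_hook}), pinning $\mu$ between $\lambda$ and the $a\times(b+d)$ rectangle; and since $\mu$ is a partition containing $(a^b,c^d)$ with at most $b+d$ rows and first row $a$, the complement $\mu/\lambda$ is a translate of a straight shape sitting in the $d\times(a-c)$ corner, so $T|_{>n}$ translates to a straight tableau in the $K$-Knuth class of $U$ and must equal $U$. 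This is essentially the natural (and, as far as I can tell, the original Buch--Samuel) route: decouple the two alphabets via interval restriction and apply the two URT hypotheses separately.
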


\begin{example}
\label{fathookURT}
We may conclude that the tableau
\[
T = \young(1234,2345,367,47,5)
\]
is a URT because $T=M_{(4^2,1^3)}\cup U$, where
\[
M_{(4^2,1^3)} = \young(1234,2345,3,4,5)\qquad\text{and}\qquad
U = \young(67,7).
\]
We know that $U$ is a URT because its standardization is minimal.
\end{example}

\subsection{$K$-Knuth invariants} \label{section:invariants}
Now that we have the notion of an equivalence class of tableaux, we will provide several invariants under the $K$-Knuth equivalence relation. These will help prove results concerning the relations between tableaux in equivalence classes. A comprehensive list will be provided at the end of the section.

\begin{definition}
For a word $w$, let $\lis(w)$ (resp. $\lds(w)$) denote the length of the longest strictly increasing (resp. decreasing) subsequence of $w$.
\end{definition}

If~$w$ is the row word of a tableau~$T$ then $\lis(w)$ is the length of the first row of~$T$
and $\lds(w)$ is the length of the first column of~$T$.

\begin{example}
We will use the reading word of the tableau $T$ from Example \ref{fathookURT} to illustrate this concept. We see that if $w = \row(T) = 54736723451234$, then $\lis(w) = 4$, and $\lds(w) = 5$.
\end{example}

\begin{proposition}\cite[Theorem 6.1]{ThYo07}
\label{lis_lds_equality}
If $w_1 \equiv w_2$ then $\lis(w_1) = \lis(w_2)$ and $\lds(w_1) = \lds(w_2)$. 
\end{proposition}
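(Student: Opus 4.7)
The plan is to verify the claim on each of the four generating $K$-Knuth moves, since an arbitrary $K$-Knuth equivalence is a composition of such moves applied within arbitrary context: $w_1 = u \cdot A \cdot v$ and $w_2 = u \cdot B \cdot v$, where $(A,B)$ is a generating pair. To halve the work, first observe that each generating relation is preserved under word reversal: the pair $xzy \equiv zxy$ reverses to $yzx \equiv yxz$, which is the other swap relation; while $x \equiv xx$ and $xyx \equiv yxy$ reverse to themselves. Since reversing a word interchanges $\lis$ and $\lds$, this reduces the problem to showing that each generating move preserves $\lis$.

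Given a maximum-length strictly increasing subsequence $\sigma$ of $w_1$, I would decompose it as $\sigma = \sigma_u \cdot \sigma_A \cdot \sigma_v$ according to which of $u$, $A$, $v$ each letter lies in, then construct a strictly increasing subsequence $\sigma_u \cdot \sigma_B \cdot \sigma_v$ of $w_2$ of the same length, where $\sigma_B$ is drawn from $B$ and satisfies $\max(\sigma_u) < \min(\sigma_B)$ and $\max(\sigma_B) < \min(\sigma_v)$. The duplication $x \equiv xx$ is immediate, since a strictly increasing chain uses at most one copy of the duplicated letter. For the three nontrivial moves, one enumerates the finitely many possibilities for $\sigma_A$ (as a strictly increasing subsequence of the three- or four-letter middle word) and checks that a suitable $\sigma_B$ always exists. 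For example, in the move $xzy \to zxy$ with $x < y < z$, if $\sigma_A = xz$ then the boundary constraints force $\max(\sigma_u) < x$ and $z < \min(\sigma_v)$; the latter gives $y < \min(\sigma_v)$, so $\sigma_B = xy$ drawn from $zxy$ is a valid replacement of the same length. The other cases, and the symmetric direction $w_2 \to w_1$, are similar.

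The main obstacle is keeping the case analysis organized, particularly for $xyx \equiv yxy$, where the repeated letters give several possible choices of $\sigma_A$ and $\sigma_B$. In every case, however, the boundary inequalities at the ends of $\sigma_A$, combined with the internal order $x < y$ or $x < y < z$, always leave enough slack to pick a $\sigma_B$ with the required strict monotonicity and correct endpoints; no deeper input is needed. Once $\lis$-invariance is established for each of the four generating moves, the reversal argument from the first paragraph immediately upgrades this to $\lds$-invariance, completing the proof.
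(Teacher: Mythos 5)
Your proposal is correct and matches the paper's (essentially unstated) argument: the paper cites Thomas--Yong and simply remarks that the equalities "follow easily from the $K$-Knuth equivalence relations," i.e.\ by checking each generating move in context, which is exactly the finite case analysis you carry out. Your reversal observation cleanly disposes of the $\lds$ half and of the second swap relation, and the representative cases you check (e.g.\ $\sigma_A = xz$ in $xzy \to zxy$, replaced by $xy$) are the only ones requiring any thought, so the proof is complete in substance.
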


The above equalities follow easily from the $K$-Knuth equivalence relations.

\begin{theorem}\cite[Theorem 1.3]{ThYo08}
\label{row_col_lis_lds}
For any word $w$, the size of the first row and first column of $P(w)$ are given by $\lis(w)$ and $\lds(w)$, respectively.
\end{theorem}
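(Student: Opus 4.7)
The plan is to reduce the theorem to a combinatorial observation about row words of increasing tableaux, already noted in the text preceding Proposition~\ref{lis_lds_equality}: for any increasing tableau $T$, $\lis(\row(T))$ equals the length of the first row of $T$ and $\lds(\row(T))$ equals the length of the first column. Granting this, the theorem follows quickly from invariance. Indeed, since $P(\row(P(w))) = P(w)$ (Hecke insertion of the row word of an increasing tableau returns that tableau), the words $w$ and $\row(P(w))$ lie in the same insertion class, hence are $K$-Knuth equivalent by the proposition that insertion equivalence implies $K$-Knuth equivalence. Proposition~\ref{lis_lds_equality} then gives $\lis(w) = \lis(\row(P(w)))$, which equals the first-row length of $P(w)$ by the observation; the argument for $\lds$ is identical.

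It then remains to verify the combinatorial observation. For the $\lis$ half, the entries of the first row of $T$, read left to right, form an increasing subsequence of $\row(T)$ of the required length, so $\lis(\row(T))$ is at least the length of the first row. For the reverse inequality, I would take an increasing subsequence $a_1 < a_2 < \cdots < a_\ell$ of $\row(T)$, write $a_s = T(i_s, j_s)$, and show that $j_1 < j_2 < \cdots < j_\ell$, which forces $T$ to have at least $\ell$ columns. If $i_s = i_{s+1}$ this is immediate since $a_s$ precedes $a_{s+1}$ in $\row(T)$. If $i_s > i_{s+1}$ and one supposes $j_s \geq j_{s+1}$ for contradiction, then the box $(i_s, j_{s+1})$ lies in $T$ by the Young-diagram shape; the increasing property of $T$ then yields $T(i_s, j_{s+1}) \leq T(i_s, j_s) = a_s$ along row $i_s$ and $T(i_s, j_{s+1}) > T(i_{s+1}, j_{s+1}) = a_{s+1}$ down column $j_{s+1}$, contradicting $a_s < a_{s+1}$. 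The $\lds$ statement follows by the symmetric argument with rows and columns interchanged.

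The main obstacle here is very minor: the invariance step is essentially free given what has already been established, and the combinatorial observation reduces to the short column-monotonicity argument above. The only point that requires care is the use of $P(\row(T)) = T$, a standard fact about Hecke insertion that licenses the reduction to $\row(P(w))$ via the insertion class.
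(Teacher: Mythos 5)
Your argument is correct, but it is worth noting that the paper does not prove this statement at all: it is imported by citation from Thomas and Yong \cite{ThYo08}, where it is established by a direct analysis of Hecke insertion (a $K$-theoretic analogue of Greene's theorem restricted to the first row and column). What you give instead is a self-contained derivation from the surrounding machinery: you reduce $w$ to $\row(P(w))$ via the identity $P(\row(T))=T$, transfer $\lis$ and $\lds$ across the insertion class using Proposition~\ref{lis_lds_equality}, and then verify the purely combinatorial fact that $\lis(\row(T))$ and $\lds(\row(T))$ compute the first row and column lengths of an increasing tableau $T$. The column-monotonicity argument in your second paragraph is sound (the case $i_s > i_{s+1}$ with $j_s \geq j_{s+1}$ does produce the stated contradiction via the box $(i_s, j_{s+1})$, which exists because straight shapes are closed under moving left within a row), and the $\lds$ half genuinely is symmetric. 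The one ingredient you should be explicit about is $P(\row(T))=T$ for a general increasing tableau: the paper only invokes this for a specific tableau in Section~7 and never states it in general, so in a fully self-contained write-up you would need to cite it (it appears in \cite{BuSa13} and \cite{PaPy14}) or prove it, which is itself a nontrivial induction on the rows of $T$. There is no circularity: Proposition~\ref{lis_lds_equality} and the insertion-equivalence-implies-$K$-Knuth-equivalence statement are both independent of the theorem being proved. What your route buys is a proof that isolates the insertion-theoretic content in the single identity $P(\row(T))=T$ and handles everything else by elementary reading-word combinatorics; what the cited proof buys is independence from the $K$-Knuth invariance results.
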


\begin{definition}
For a word \textbf{$w$}, let \textbf{$w|_{[a,b]}$} 
denote the word obtained from \textbf{$w$} by deleting all integers not contained in the interval $[a,b]$. 
Likewise, let $T$ be an increasing tableau, not necessarily straight,
and let $T|_{[a,b]}$ denote the tableau obtained from~$T$ by removing all boxes with labels outside of~$[a,b]$.
\end{definition}

\begin{proposition}\cite[Lemma 5.5]{BuSa13}
\label{restrictedalpha}
Let $[a,b]$ be an integer interval.
\begin{enumerate}
\item Let \textbf{$w_1$} and \textbf{$w_2$} be $K$-Knuth equivalent words.  Then \textbf{$w_1|_{[a,b]}$} and \textbf{$w_2|_{[a,b]}$} are $K$-Knuth equivalent words.
\item Let $T_1$ and $T_2$ be $K$-Knuth equivalent (possibly skew) tableaux. Then $T_1|_{[a,b]}\equiv T_2|_{[a,b]}$.
\end{enumerate}

\end{proposition}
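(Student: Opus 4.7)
The plan is to reduce part~(1) to a per-move check and then derive part~(2) from part~(1).

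For part~(1), I will argue by induction on the number of $K$-Knuth moves needed to connect $w_1$ and $w_2$. It suffices to show that if $w$ and $w'$ differ by a single $K$-Knuth move, then $w|_{[a,b]} \equiv w'|_{[a,b]}$. Since the letters outside the interval of the move are untouched, I only need to analyze what happens to the letters appearing in the move itself. For the first two Knuth relations $xzy \equiv zxy$ and $yxz \equiv yzx$ (with $x<y<z$), a direct case check on which of $\{x,y,z\}$ lie in $[a,b]$ suffices, and the key observation is that if $x,z \in [a,b]$ then automatically $y \in [a,b]$ (because $a \le x < y < z \le b$). This rules out the only case where the restricted words could disagree: in every remaining case the two restrictions are literally the same word. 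The relation $x \equiv xx$ either restricts to itself (if $x\in[a,b]$) or to $\varnothing \equiv \varnothing$.

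The one place where equality fails is the relation $xyx \equiv yxy$. If both $x,y\in[a,b]$ or neither lies in $[a,b]$, there is nothing to check. The interesting cases are exactly one of $x,y$ in $[a,b]$: for instance, if $x\in[a,b]$ and $y\notin[a,b]$, then $xyx$ restricts to $xx$ while $yxy$ restricts to $x$, and these are $K$-Knuth equivalent via the third relation $x \equiv xx$. The symmetric case is identical. Thus in every case, a single $K$-Knuth move on $w$ produces a word whose restriction is $K$-Knuth equivalent to $w|_{[a,b]}$, completing the induction for part~(1). I expect this fourth-relation case to be the main subtlety, since it is the only place where genuine equivalence (rather than equality) of restrictions is needed.

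For part~(2), I use the fact that $T_1\equiv T_2$ means $\row(T_1)\equiv \row(T_2)$ by definition, so part~(1) gives $\row(T_1)|_{[a,b]} \equiv \row(T_2)|_{[a,b]}$. The final step is to observe that $\row(T_i)|_{[a,b]}$ is a valid reading word of $T_i|_{[a,b]}$: deleting letters from $\row(T_i)$ does not disturb the relative order of the surviving labels, and whenever a box $\alpha$ is weakly southwest of a box $\beta$ in $T_i|_{[a,b]}$ the same was true in $T_i$, so the inherited order still satisfies the reading-word condition. Since any two reading words of the same (possibly non-straight) tableau are $K$-Knuth equivalent, we conclude $T_1|_{[a,b]} \equiv T_2|_{[a,b]}$, as desired.
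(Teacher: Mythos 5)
Your argument is correct. Note, however, that the paper does not prove this proposition at all: it is quoted directly from Buch and Samuel \cite[Lemma~5.5]{BuSa13}, so there is no in-paper proof to compare against. Your blind proof is a sound, self-contained reconstruction of the standard argument: reduce part~(1) to a single $K$-Knuth move, use the interval property of $[a,b]$ to exclude the one genuinely dangerous configuration for the relations $xzy\equiv zxy$ and $yxz\equiv yzx$ (namely $x,z\in[a,b]$ but $y\notin[a,b]$, which would leave the inequivalent words $xz$ and $zx$), and rescue the $xyx\equiv yxy$ case with $x\equiv xx$; then deduce part~(2) by observing that $\row(T)|_{[a,b]}$ is a reading word of $T|_{[a,b]}$ and invoking the equivalence of all reading words of a tableau. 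One small imprecision: when all of $x,y,z$ lie in $[a,b]$, the two restrictions are not ``literally the same word'' but differ by the same $K$-Knuth move, which of course still yields equivalence; you may want to phrase that case separately.
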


Proposition \ref{restrictedalpha} is illustrated in the following example. 

\begin{example}
We have that $T_1 \equiv T_2$ for
\[
T_1 = \young(12345,23,3,5)\qquad
T_2 = \young(12345,235,35,5).
\]
Therefore $T_1|_{[3,5]} \equiv T_2|_{[3,5]}$,
so that
\[
\begin{ytableau}
\none & \none & 3 & 4 & 5\\
\none & 3 \\
3 \\
5
\end{ytableau}
\equiv 
\begin{ytableau}
\none & \none & 3 & 4 & 5\\
\none & 3 & 5 \\
3 & 5 \\
5
\end{ytableau}.
\]
\end{example}

\begin{definition}
Let $T$ be a straight tableau. The \emph{outer hook}
of $T$ is the subtableaux of $T$ consisting
of the first row and the first column.
\end{definition}
\begin{example}
The outer hook of the tableau below is shaded gray.
\[
\begin{ytableau}
*(lightgray) 1 & *(lightgray) 2 & *(lightgray) 4 & *(lightgray) 5\\
*(lightgray) 3 & 4 & 8\\
*(lightgray) 6 & 7
\end{ytableau}
\]
\end{example}
Our third invariant for $K$-Knuth classes is the outer hook.
\begin{proposition}\label{prop:outer_hook}
Let $T$ and~$T'$ be tableaux such that $T\equiv T'$.
Then $T$ and~$T'$ have the same outer hook.
\end{proposition}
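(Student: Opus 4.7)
The plan is to leverage the invariance of $\lis$ and $\lds$ (Proposition~\ref{lis_lds_equality}) together with the restriction operation (Proposition~\ref{restrictedalpha}) to promote the invariance of first-row and first-column \emph{lengths} to invariance of the actual \emph{entries}. The key observation is that $T|_{[1,k]}$ is again a straight tableau when $T$ is: if a box $(i,j)$ has label strictly larger than $k$, then every box weakly southeast of $(i,j)$ carries an even larger label and is therefore also removed, so the surviving shape is downward-closed in the required sense.

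Let the first row of $T$ be $a_1 < a_2 < \cdots < a_r$, and set $\alpha_T(k) := |\{i : a_i \leq k\}|$. Since entries along row $1$ are strictly increasing, the boxes $(1,j)$ surviving in $T|_{[1,k]}$ form a prefix of the first row, so the first row of $T|_{[1,k]}$ has length exactly $\alpha_T(k)$. Now, given $T \equiv T'$, Proposition~\ref{restrictedalpha} yields $T|_{[1,k]} \equiv T'|_{[1,k]}$ for every $k$, and both of these are straight tableaux by the preceding paragraph. Proposition~\ref{lis_lds_equality} applied to their row words, combined with the remark immediately before Proposition~\ref{lis_lds_equality} identifying $\lis$ of a row word with the length of the first row, gives
\[
\alpha_T(k) \;=\; \lis(\row(T|_{[1,k]})) \;=\; \lis(\row(T'|_{[1,k]})) \;=\; \alpha_{T'}(k)
\]
for all $k \in \N$. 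Since a strictly increasing finite sequence is uniquely recovered from its counting function $k \mapsto \alpha(k)$ (the entries are precisely the jump points), the first rows of $T$ and $T'$ coincide.

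An identical argument, with columns in place of rows, $\lds$ in place of $\lis$, and intervals $[1,k]$ restricted to the first column (whose entries are also strictly increasing), shows that the first columns of $T$ and $T'$ coincide. Together these two conclusions give the equality of outer hooks.

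There is no real obstacle: every ingredient is already in the excerpt, and the only subtlety is verifying that restriction to $[1,k]$ preserves the straight-shape property so that the length of its first row (resp.\ column) has the expected combinatorial meaning as a counting function of the first row (resp.\ column) entries of $T$.
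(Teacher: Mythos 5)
Your proof is correct and uses the same essential machinery as the paper's: restricting to the initial intervals $[1,k]$ (Proposition~\ref{restrictedalpha}) and invoking the invariance of first-row and first-column lengths via $\lis$ and $\lds$. The paper packages this as an induction on $k$, adding one letter at a time and noting the outer hook grows by at most two boxes labeled $k$, whereas you recover the entries directly from the counting function $k\mapsto\alpha_T(k)$; this is only a cosmetic difference, and your explicit check that $T|_{[1,k]}$ remains a straight tableau is a welcome detail the paper leaves implicit.
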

\begin{proof}
Without loss of generality, assume $T$ and $T'$ are initial tableaux. Suppose $T$ and~$T'$ have letters in the alphabet~$[n]$.
Consider the two tableau sequences
\begin{gather*}
T|_{[1]},\,T|_{[2]},\ldots,\,T|_{[n]}\\
T'|_{[1]},\,T'|_{[2]},\ldots,\,T'|_{[n]}.
\end{gather*}
We have already seen that $T|_{[i]}\equiv T'|_{[i]}$ because $T\equiv T'$.
Now proceed by induction on~$n$.
The tableaux $T|_{[1]}$ and $T'|_{[1]}$
are equivalent and contain one box,
meaning $T|_{[1]} = T'|_{[1]}$.

Assume $T|_{[i]}$ and $T'|_{[i]}$
have the same outer hook. The outer hook of $T|_{[i]}$ (resp.~$T'|_{[i]}$)
differs from the outer hook of $T|_{[i+1]}$ (resp.~$T'|_{[i+1]}$)
by the addition of at most two boxes,
which must be labeled with $i+1$. By Proposition~\ref{lis_lds_equality} and Theorem~\ref{row_col_lis_lds},  $T|_{[i+1]}$ and $T'|_{[i+1]}$ must have the same number of rows and columns since $T|_{[i+1]}\equiv T'|_{[i+1]}$, so $T|_{[i+1]}$ and $T'|_{[i+1]}$ must have the same outer hook.
\end{proof}

Our fourth invariant is simple and involves the transpose
of a tableau:
if $T_1 \equiv T_2$, then $T_1^t \equiv T_2^t$.
Invariance under the transpose follows from the fact that
if $T_1$ and $T_2$ are $K$-Knuth equivalent then they
are $K$-jdt equivalent, and any sequence of $K$-jdt
moves connecting the two may be applied to their transposes.

\begin{example}
We have that 
\[
\young(124,3) \equiv \young(124,34)
\]
implying that 
\[
\young(13,2,4) \equiv \young(13,24,4)  \ .
\]
\end{example}

Another invariant is the \textit{Hecke permutation}, which was defined in~\cite{BuKr06} to provide a coarser equivalence than $K$-Knuth equivalence.
Define $\Sigma$ to be the group of bijective maps 
\[
\{w: \N \rightarrow \N \mid w(x) = x \text{ for all but finitely many } x \in \N\}.
\]
The adjacent transpositions $s_i=(i,i+1)\in\Sigma$
generate $\Sigma$ and give the group a natural 
presentation as a Coxeter group.
We will use this Coxeter group structure
to define a new product on $\Sigma$
that makes $\Sigma$ into a monoid.

Given a permutation~$u$, let $\ell(u)$
denote the shortest length of a factorization
$u=s_{i_1}\cdots s_{i_k}$ of $u$ as a product
of the $s_i$~transpositions.
Equivalently, $\ell(u)$ is the number
of inversions of~$u$, that is,
the number of pairs $i < j$ such that
$u(i) > u(j)$ (see \cite[Sec. 1.6 Exercise 2]{Hump92}).
\begin{definition}
Let $u\in\Sigma$ be a permutation. 
The \textit{Hecke product} of $u$ and a transposition~$s_i$
is defined by
\[
u \cdot s_i = \begin{cases} us_i & \mbox{ if } \ell(us_i) > \ell(u); \\
								u & \mbox{ otherwise. } \\
                 \end{cases}
\]
Given a second permutation 
$v =  s_{i_1} \cdot s_{i_2} \cdots s_{i_l}\in \Sigma$, 
the Hecke product of $u$ and $v$ is defined by
\[
u \cdot v = u \cdot s_{i_1} \cdot s_{i_2} \cdots s_{i_l},
\]
multiplying from left to right.
\end{definition}

The Hecke product is associative and gives
a monoidal structure on $\Sigma$, 
allowing us to introduce the following concept.

\begin{definition}
The \textit{Hecke permutation} of an increasing tableau $T$ is $w(T) = w(a) = s_{a_1} \cdot s_{a_2} \cdots s_{a_k}$, where $a = a_1\cdots a_k$ is a reading word of $T$. 
\end{definition}

\begin{proposition} \label{permutation}
The Hecke permutation $w(T)$ of an increasing tableau is invariant under $K$-Knuth moves.
\end{proposition}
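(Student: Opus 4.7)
The plan is to verify directly that each of the four generating $K$-Knuth relations preserves the Hecke product of the corresponding word. Because the Hecke product is associative, a single $K$-Knuth move only alters a short local factor of the word, so the problem reduces to four small identities in the Hecke monoid $\Sigma$. Once these four local identities are established, invariance under an arbitrary sequence of $K$-Knuth moves (and hence over the full equivalence class of reading words of~$T$) follows at once by iteration.

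First I would dispatch the relation $x \equiv xx$, which translates to the identity $s_x \cdot s_x = s_x$ in $\Sigma$. This is immediate from the definition of the Hecke product, since $\ell(s_x s_x) = 0 < 1 = \ell(s_x)$ forces the ``otherwise'' branch of the defining piecewise formula. Next, for the two Knuth-type relations $xzy \equiv zxy$ and $yxz \equiv yzx$ with $x < y < z$, I would exploit the fact that the strict inequality forces $z - x \geq 2$. Hence $s_x$ and $s_z$ commute in the Coxeter group $\Sigma$, and since $s_x s_z$ is already a reduced expression, this commutation lifts to the Hecke monoid. Inserting this commutation into the three-letter products immediately yields both identities.

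The only relation requiring a genuine case split is $xyx \equiv yxy$. When $|x - y| = 1$, this is exactly the braid relation $s_x s_y s_x = s_y s_x s_y$; both sides are reduced expressions of length $3$, so the identity passes from $\Sigma$ as a Coxeter group to $\Sigma$ as a Hecke monoid. When $|x - y| \geq 2$, the simple transpositions $s_x$ and $s_y$ commute, and combining commutation with the idempotence $s_i \cdot s_i = s_i$ established above collapses both $s_x \cdot s_y \cdot s_x$ and $s_y \cdot s_x \cdot s_y$ to the common value $s_x \cdot s_y$.

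No individual step requires substantive computation. The main conceptual obstacle, if there is one, is recognizing that the constraint $x < y < z$ appearing in the first two Knuth-type relations is precisely what supplies the non-adjacency $|x - z| \geq 2$ needed to invoke commutation in $\Sigma$, and that the braid relation together with idempotence suffice for the remaining case. Once all four local identities are in hand, the proposition follows.
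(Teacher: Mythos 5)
Your argument is correct, but it takes a different route from the paper: the paper does not prove Proposition~\ref{permutation} directly at all, instead deducing it from Corollary~6.5 of Buch and Samuel (invariance of the Hecke permutation under $K$-jdt slides) together with the equivalence of $K$-jdt and $K$-Knuth equivalence. You instead verify the four generating relations directly in the $0$-Hecke monoid, which is more elementary and self-contained, avoiding the $K$-jdt machinery entirely; the cost is that you lean on the associativity and well-definedness of the Hecke product (in particular, that $u \cdot s_x \cdot s_z = u \cdot s_z \cdot s_x$ for an arbitrary left factor $u$ when $|x - z| \ge 2$, which is what the relation $yxz \equiv yzx$ actually requires, since the commuting pair sits to the right of $s_y$). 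That fact is exactly the statement that the braid and commutation relations descend from reduced words to the Hecke product, and the paper asserts the monoidal structure without proof, so your reliance on it is consistent with the paper's framework. Your four local identities --- idempotence for $x \equiv xx$, commutation for the two Knuth-type relations via $z - x \ge 2$, and the braid relation plus idempotence for $xyx \equiv yxy$ --- are all checked correctly, and the reduction to local factors via $w(uav) = w(u)\cdot w(a)\cdot w(v)$ is valid by associativity.
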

Proposition~\ref{permutation} is equivalent 
to Corollary 6.5 in \cite{BuSa13}, using the fact that $K$-jdt equivalence implies $K$-Knuth equivalence. 

Having the same Hecke permutation is a necessary but not sufficient condition for two tableaux to appear in the same $K$-Knuth class,
meaning that the number of $K$-Knuth equivalence classes on~$[n]$ is at least as large as $S_{n+1}$, the symmetric group on $n+1$ elements. Hence there are a minimum of $(n+1)!$ $K$-Knuth classes
of tableaux on~$[n]$ letters.
Proposition~\ref{tableauxon2n}
will make use of this fact.
\begin{example}
The Hecke permutation for the word $21231$
is given by
\[
1\mapsto3,\quad2\mapsto2,\quad3\mapsto4,\quad4\mapsto1.
\]
\end{example}

In summary, the following are invariant under the $K$-Knuth 
equivalence relations:
\begin{enumerate}
\item the length of the longest strictly increasing (or decreasing) subword of a word,
\item the restriction of a word or a tableau to an interval subalphabet, up to $K$-Knuth equivalence,
\item the outer hook of a tableau,
\item the transpose of a tableau, up to $K$-Knuth equivalence, and
\item the Hecke permutation.
\end{enumerate}

\section{Algorithms} \label{section:algorithms}

This section deals with computational aspects of the $K$-Knuth equivalence relation. 
We will describe an algorithm to solve the 
following two problems:
\begin{enumerate}
\item Determine if two words are $K$-Knuth equivalent.
\item Compute all $K$-Knuth classes of tableaux
on~$[n]$.
\end{enumerate}
Specifically, the algorithm will solve the second problem.
Given that solution,
we can solve the first problem by computing the insertion tableaux of the two words.

Let $\mathbb{T}_n$ be the set of (not necessarily initial) increasing tableaux on~$[n]$, and let $N_n$ be its cardinality. 
We first remark that it is fairly easy to construct the set~$\mathbb{T}_n$.
Indeed, one can construct it recursively.
Given $\mathbb{T}_{n-1}$, one can check for each $T\in\mathbb{T}_{n-1}$ where one can add boxes with entry $n$ to $T$ to get $T'\in\mathbb{T}_n$.
Therefore, we will assume that we have the set~$\mathbb{T}_n$ at our disposal in the algorithm that follows.

We say that a pair $(a,b)$ of words is a \emph{primitive pair} if
\begin{enumerate}
	\item $a = p$, $b = pp$ for some letter $p$;
	\item $a = pqp$, $b = qpq$ for some letters $p\neq q$;
	\item $a = xzy$, $b = zxy$ for some letters $x < y < z$; or
	\item $a = yxz$, $b = yzx$ for some letters $x < y < z$.	
\end{enumerate}
Throughout the algorithm, we will maintain a set partition $\mathcal{P}$ of $\mathbb{T}_n$ 
and a queue~$Q$ that stores unordered pairs $\{T_1,T_2\}$ of tableaux in~$\mathbb{T}_n$. Let $\pend$ denote the set partition
$\mathcal P$ at the end of the algorithm.
We claim that $\pend$ gives the $K$-Knuth
equivalence classes of $\mathbb T_n$.
In what follows, we say the partition $\mathcal P$
\emph{joins} the tableaux $T_1$ and $T_2$
if $\mathcal P$ contains a set containing both
$T_1$ and $T_2$.

\begin{algorithm}[h]
  \caption{Algorithm for Computing all $K$-Knuth Classes  \label{algorithm}}
  \begin{algorithmic}[1]
  	\State Initialization: $\mathcal{P}\coloneqq\{\{T\} : T\in\mathbb{T}_n\}$; $Q$ empty.
    \ForAll{ $T\in\mathbb{T}_n$}\label{alg_line:loop1_start}
    	\ForAll{ primitive pair $(a,b)$}
      	\State Compute $T_1\coloneqq T\leftarrow a$ and $T_2\coloneqq T\leftarrow b$.
      	\If{$T_1$ and $T_2$ are not joined by $\mathcal{P}$}\label{alg_line:check_join1}
      		\State Merge the sets in $\mathcal{P}$ containing $T_1$ and $T_2$.\label{alg_line:merge1}
      		\State Insert the pair $\{T_1,T_2\}$ into $Q$.\label{alg_line:queue_insert1}
      	\EndIf
      \EndFor
    \EndFor\label{alg_line:loop1_end}
     \While{$Q$ is non-empty}\label{alg_line:loop2_start}
    	\State Remove a pair $\{U_1,U_2\}$ from $Q$.\label{alg_line:queue_remove}
    	\ForAll{$1\leq y\leq n$}
      	\State Compute $T_1\coloneqq U_1\leftarrow y$ and $T_2\coloneqq U_2\leftarrow y$.
      	\If{$T_1$ and $T_2$ are not joined by $\mathcal{P}$}
      		\State Merge the sets in $\mathcal{P}$ containing $T_1$ and $T_2$.
            
                \algstore{myalg}
\end{algorithmic}
\end{algorithm}

\begin{algorithm}                     
\begin{algorithmic} [h]                   
\algrestore{myalg}
            
      		\State Insert the pair $\{T_1,T_2\}$ into $Q$.\label{alg_line:queue_insert2}
      	\EndIf
      \EndFor
    \EndWhile\label{alg_line:loop2_end}
    \State Output $\mathcal{P}$.
\end{algorithmic}
\end{algorithm}


\begin{theorem}\label{thm:algorithm correctness}
$T_1$ and $T_2$ are $K$-Knuth equivalent if and only if they are joined by~$\pend$.
\end{theorem}

We need a few preliminary lemmas to prove Theorem~\ref{thm:algorithm correctness}. In the rest of the section, we let $\mathcal{Q}$ denote the set of pairs $\{T_1,T_2\}$ that have been inserted into $Q$:
$$\mathcal{Q}=\left\{\{T_1,T_2\} : \text{$\{T_1,T_2\}$ has been inserted into $Q$ (in line~\ref{alg_line:queue_insert1} or~\ref{alg_line:queue_insert2})}\right\}. $$

\begin{lemma}\label{lem:algorithm correctness 4}
If $\{T_1,T_2\}\in\mathcal{Q}$, then $T_1\equiv T_2$.
\end{lemma}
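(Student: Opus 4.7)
The plan is to prove this by induction on the order in which pairs are inserted into $Q$. There are only two code locations (lines~\ref{alg_line:queue_insert1} and~\ref{alg_line:queue_insert2}) at which pairs are added, and they correspond respectively to the base case and the inductive step. Throughout, I will use two background facts: (i)~for every word $u$, one has $\row(P(u)) \equiv u$ (so $P$ preserves $K$-Knuth classes), which is a reformulation of Proposition~2.14; and (ii)~the $K$-Knuth relation is preserved under appending letters, since each $K$-Knuth move is a local rewriting of a length-$2$ or length-$3$ factor and hence can be carried out inside a longer word.

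For the base case, consider a pair $\{T_1,T_2\}$ inserted into $Q$ at line~\ref{alg_line:queue_insert1}, so $T_1 = T\leftarrow a$ and $T_2 = T\leftarrow b$ for some $T\in\mathbb{T}_n$ and some primitive pair $(a,b)$. Let $w$ be any reading word of $T$, so that $P(w)=T$ and therefore $T_1 = P(wa)$ and $T_2 = P(wb)$. By inspection of the four types of primitive pair, $a \equiv b$, and appending to $w$ gives $wa \equiv wb$. Applying~(i), $\row(T_1) \equiv wa \equiv wb \equiv \row(T_2)$, so $T_1 \equiv T_2$, as required.

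For the inductive step, consider a pair $\{T_1,T_2\}$ inserted at line~\ref{alg_line:queue_insert2}. Then $T_i = U_i \leftarrow y$ for some letter $y\in[n]$ and some pair $\{U_1,U_2\}$ previously removed from $Q$ at line~\ref{alg_line:queue_remove}; in particular $\{U_1,U_2\}$ was inserted into $Q$ strictly earlier, so by the inductive hypothesis $U_1 \equiv U_2$, i.e., $\row(U_1) \equiv \row(U_2)$. Using~(ii), append $y$ to get $\row(U_1)\,y \equiv \row(U_2)\,y$. Since $T_i = P(\row(U_i)\,y)$, another application of~(i) yields $\row(T_1) \equiv \row(U_1)\,y \equiv \row(U_2)\,y \equiv \row(T_2)$, so $T_1 \equiv T_2$.

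The argument is almost entirely bookkeeping, and I expect no real obstacle beyond noting the two supporting facts above. The only subtle point is the repeated use of $\row(P(u))\equiv u$ to convert between the tableau-level statement $T_1\equiv T_2$ and the word-level congruences produced by primitive moves and by appending a letter; once that is observed, both cases reduce to a single application of a $K$-Knuth move at the end of a word.
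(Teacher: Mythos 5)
Your proof is correct and follows essentially the same induction on the order of insertion into $Q$ as the paper's: the base case is handled by the primitive-pair insertions in the first loop and the inductive step by inserting the common letter $y$ in the second loop, and you merely make explicit the background facts ($\row(P(u))\equiv u$ and stability of $\equiv$ under appending letters) that the paper leaves implicit. One small caution: you should take $w=\row(T)$ rather than ``any reading word'' of $T$, since $P(w)=T$ is guaranteed for the row word but not obviously for an arbitrary reading word; this does not affect the argument.
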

\begin{proof}
Assume that $\{T_1,T_2\}$ is the $k$th pair inserted into $Q$. We will proceed by induction on $k$. If $\{T_1,T_2\}$ is inserted into $Q$ in line~\ref{alg_line:queue_insert1}, then $T_1$ and $T_2$ are obtained by inserting $a$ and $b$ into the same tableau respectively for some primitive pair $(a,b)$. Since $a\equiv b$, $T_1\equiv T_2$. If $\{T_1,T_2\}$ is inserted into $Q$ in line~\ref{alg_line:queue_insert2}, then $T_1$ and $T_2$ are obtained by inserting the same letter $y$ into $U_1$ and $U_2$ respectively for some pair $\{U_1,U_2\}$ previously inserted into $Q$. By induction hypothesis, $U_1\equiv U_2$, so $T_1\equiv T_2$.
\end{proof}

\begin{lemma}\label{lem:algorithm correctness 2}
Suppose $\pend$ joins the tableaux $T_1$ and $T_2$. Then, there exists a sequence $T_1 = U_0,U_1,\ldots,U_r = T_2$ of tableaux such that $\{U_i,U_{i+1}\}\in\mathcal{Q}$ for each $i$.
\end{lemma}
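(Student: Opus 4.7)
The plan is to induct on the number of merge operations performed by the algorithm. The key observation, which I would emphasize up front, is that the partition $\mathcal{P}$ changes only through the merges on line~\ref{alg_line:merge1} and its counterpart in the second loop, and every such merge is bundled with the insertion of the responsible pair into $Q$ (see lines~\ref{alg_line:queue_insert1} and~\ref{alg_line:queue_insert2}). Thus every merge is \emph{witnessed} by a pair that is thereby placed into $\mathcal{Q}$.

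The base case (zero merges) is trivial: the partition is still the discrete one, so $T_1$ and $T_2$ can be joined only if $T_1=T_2$, in which case the one-term sequence $T_1$ works.

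For the inductive step, assume the claim after $k$ merges and consider tableaux $T_1,T_2$ joined after $k+1$ merges. If they were already joined after $k$ merges, the inductive hypothesis produces the sequence immediately. Otherwise the $(k+1)$st merge was the first to join them: it unified two sets $S_1 \ni T_1$ and $S_2 \ni T_2$, and it was triggered by inserting some pair $\{V_1,V_2\}$ into $Q$, where (without loss of generality) $V_1 \in S_1$ and $V_2 \in S_2$ immediately before the merge. Since $T_1$ and $V_1$ lie in the same block after $k$ merges, the inductive hypothesis yields a chain in $\mathcal{Q}$ from $T_1$ to $V_1$; similarly a chain connects $V_2$ to $T_2$. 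Concatenating these two chains around the link $\{V_1,V_2\}\in\mathcal{Q}$ produces the desired sequence from $T_1$ to $T_2$.

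I do not anticipate a genuine obstacle: the lemma is essentially a bookkeeping statement, saying that the graph on $\mathbb{T}_n$ whose edges are the pairs in $\mathcal{Q}$ has connected components exactly the blocks of $\pend$. The only point requiring care is verifying that, at the moment of the critical merge, $V_1$ is already joined to $T_1$ in the current partition (and likewise for $V_2,T_2$), so that the inductive hypothesis can legitimately be invoked on the two smaller instances.
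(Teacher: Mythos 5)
Your proof is correct and follows essentially the same route as the paper's: induct on the index of the merge that first joins $T_1$ and $T_2$, observe that this merge is witnessed by a pair $\{V_1,V_2\}$ placed into $\mathcal{Q}$ with $V_1,V_2$ in the two merged blocks, and concatenate the chains from the inductive hypothesis around that link. No gaps.
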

\begin{proof}
Assume that $T_1$ and $T_2$ are joined by $\mathcal P$ after the $k$th merge but not before. We will proceed by induction on $k$. The case $k=0$ is trivial. Suppose $k > 0$. Let $S_1$ and $S_2$ be the sets containing $T_1$ and $T_2$, respectively, before the $k$th merge. 
By assumption, the sets $S_1$ and $S_2$ merge at the $k$th merge. 
After that merge, we insert into $Q$ the pair $\{V_1,V_2\}$, for some $V_1\in S_1$ and $V_2\in S_2$, so by definition $\{V_1,V_2\}\in\mathcal{Q}$. We know from the induction hypothesis that there is a chain of pairs in $\mathcal{Q}$ connecting $T_1$ and $V_1$ as well as $T_2$ and $V_2$. The result then follows.
\end{proof}

\begin{lemma}\label{lem:algorithm correctness 1}
Fix $1\leq y\leq n$. Let $T_1$ and $T_2$ be two tableaux and let $T_i' = T_i\leftarrow y$. If $\pend$ joins $T_1$ and $T_2$,
then it joins $T_1'$ and $T_2'$.
\end{lemma}
\begin{proof}
By Lemma \ref{lem:algorithm correctness 2}, it suffices to consider the case where $\{T_1,T_2\}\in\mathcal{Q}$. In this case, 
$\{T_1,T_2\}$ is eventually removed from $Q$ in line~\ref{alg_line:queue_remove}. If $T_1'$ and $T_2'$ are joined by $\mathcal{P}$ at this point, then the assertion holds. Otherwise, we will merge the sets containing $T_1'$ and $T_2'$.
\end{proof}

\begin{lemma}\label{lem:algorithm correctness 3}
Let $T\in\mathbb{T}_n$ and let $(a,b)$ be a primitive pair. Set $T_1 = T\leftarrow a$ and $T_2 = T\leftarrow b$. Then, $T_1$ and $T_2$ are joined by $\pend$.
\end{lemma}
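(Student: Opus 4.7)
The plan is to observe that this lemma is essentially immediate from the structure of the first loop (lines~\ref{alg_line:loop1_start}--\ref{alg_line:loop1_end}) of Algorithm~\ref{algorithm}, together with the monotonicity of the partition $\mathcal{P}$: sets in $\mathcal{P}$ are only ever merged, never split, so once two tableaux are joined they remain joined for the rest of the execution.

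Concretely, I would argue as follows. Fix $T\in\mathbb{T}_n$ and a primitive pair $(a,b)$. The outer \textbf{for}-loop on line~\ref{alg_line:loop1_start} iterates over every element of $\mathbb{T}_n$, and the inner \textbf{for}-loop iterates over every primitive pair. Hence at some step of the first phase the algorithm reaches the choice $(T,(a,b))$ and computes $T_1 = T\leftarrow a$ and $T_2 = T\leftarrow b$ on line~\ref{alg_line:check_join1}. At that moment, one of two things happens: either $T_1$ and $T_2$ are already joined by the current partition $\mathcal{P}$, or they are not and line~\ref{alg_line:merge1} merges the two sets containing them. In the second case, $T_1$ and $T_2$ are joined immediately after this step. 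In the first case, $T_1$ and $T_2$ are already joined at this step.

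In either case, $T_1$ and $T_2$ are joined by $\mathcal P$ at some point during the execution. Since $\mathcal{P}$ is updated only by merging (line~\ref{alg_line:merge1} in the first loop, and the analogous merge step in the second loop), the property of being joined is preserved under every subsequent update. Therefore $T_1$ and $T_2$ remain joined in the final partition $\pend$, which is what was to be shown.

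There is essentially no obstacle here beyond recording this observation cleanly; the real content of Theorem~\ref{thm:algorithm correctness} lies in Lemma~\ref{lem:algorithm correctness 1} (propagating equivalences through further insertions via the queue $Q$), while Lemma~\ref{lem:algorithm correctness 3} just certifies that the primitive $K$-Knuth moves are implemented as base cases during the first pass of the algorithm.
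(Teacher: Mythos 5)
Your proposal is correct and matches the paper's own (very short) proof: both observe that the first loop visits the pair $(T,(a,b))$, checks at line~\ref{alg_line:check_join1} whether $T_1$ and $T_2$ are joined, and merges them at line~\ref{alg_line:merge1} if not. The only addition is your explicit remark that joins persist because $\mathcal{P}$ is only ever updated by merging, which the paper leaves implicit.
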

\begin{proof}
Eventually, we will check at line~\ref{alg_line:check_join1} whether $\mathcal P$ joins $T_1$ and $T_2$. If it does, then we are fine. Otherwise, we will merge the sets containing them on line~\ref{alg_line:merge1}.
\end{proof}

\begin{proof}[Proof of Theorem \ref{thm:algorithm correctness}]
The ``if'' direction follows from Lemma \ref{lem:algorithm correctness 4} and \ref{lem:algorithm correctness 2}.

For the ``only if'' direction, we start with the special case where there exist words $w_1$ and $w_2$ that differ by one $K$-Knuth move and such that $P(w_1) = T_1$ and $P(w_2) = T_2$. Write $w_1 = uav$ and $w_2 = ubv$ where $(a,b)$ is a primitive pair, so that by Lemma~\ref{lem:algorithm correctness 3} the tableaux $P(ua)$ and $P(ub)$ are joined by $\pend$. Applying Lemma \ref{lem:algorithm correctness 1} multiple times, we conclude that $\pend$ joins $T_1$ and $T_2$.

For the general case, there is a sequence $T_1 = U_0,U_1,\ldots,U_r = T_2$ of tableaux such that for each $i$ there exist two words differing by one $K$-Knuth move and inserting into $U_i$ and $U_{i+1}$ respectively. By the previous case, $\pend$ joins $U_i$ and $U_{i+1}$, so $\pend$ joins all $U_i$'s. 
In particular, $\pend$ joins $T_1$ and $T_2$.
\end{proof}

Having shown the correctness of the algorithm, 
we will now briefly analyze the runtime.
Of the operations performed during the algorithm,
the three we will focus on
are the following:
inserting a word into a tableau,
determining if two tableaux are joined by the partition,
and merging two sets
of the partition. These are the nontrivial
operations of the algorithm,
and their runtime depends on implementation-specific
details with which we will not concern ourselves.
The reader may supply
his or her own runtime estimates for each operation.

The algorithm consists of two successive loops,
comprising lines \ref{alg_line:loop1_start} -- \ref{alg_line:loop1_end} and \ref{alg_line:loop2_start} -- \ref{alg_line:loop2_end},
and each of the three operations is performed
at most twice during each loop.
Letting $N_n$ denote the cardinality of~$\mathbb T_n$,
we see that the first loop runs~$O(n^3 N_n)$ times
and the second loop runs at most $n(N_n-1)$ times,
since every element inserted into~$Q$
represents a merge of two sets in the partition.
We therefore have the upper bound of $O(n^3 N_n)$
for the number of insertion, determination
and merge operations in the algorithm.

\section{Length of Intermediate Words}
If $w\equiv w'$, then there exists a sequence $w = w_0,w_1,\ldots,w_r = w'$ of words such that $w_i$ and $w_{i+1}$ differ by one $K$-Knuth move. It is natural to ask whether it is always possible to find such a sequence where the intermediate words $w_i$ have length at most that of the longer of $w$ and $w'$. 
Surprisingly, the answer is no: one can check by computer that $4235124\equiv 4523124$ but the two words cannot be connected by words of length at most $7$. 
However, it is possible to give a weaker upper bound in terms of the size of the alphabet.

\begin{definition} \label{definition:length}
Let $w$ and $w'$ be words and let $k$ be a positive integer. 
We say that $w$ and $w'$ are \emph{equivalent through words
of length~$k$}, written $w\overset{k}{\equiv}w'$, if there exists a sequence $w = w_0,w_1,\cdots,w_r = w'$ of words such that $w_i$ and $w_{i+1}$ differ by one $K$-Knuth move, and each word $w_i$ has length at most $k$.
\end{definition}

We will prove the following result.

\begin{theorem}\label{thm:bound on length}
Suppose $T_1\equiv T_2$ are tableaux on~$[n]$. Let $N = n(n+1)(n+2)/3 + 3$. Then $\row(T_1)\overset{N}{\equiv}\row(T_2)$.
\end{theorem}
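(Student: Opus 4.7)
The plan is to proceed by induction on $n$, motivated by the fact that the bound $N(n) := n(n+1)(n+2)/3 + 3$ satisfies $N(n) - N(n-1) = n(n+1)$; the inductive step should therefore introduce at most $n(n+1)$ extra length beyond the hypothesis at $n-1$. The base case $n = 1$ is trivial, since any tableau on $[1]$ is a single box and $T_1 = T_2$.

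For the inductive step, take $T_1 \equiv T_2$ on $[n]$ and let $T_i^- := T_i|_{[n-1]}$. Since $n$ is the maximal letter, each box of $T_i$ labeled $n$ must be an outer corner of the shape; consequently $T_i^-$ is itself a straight tableau on $[n-1]$. By Proposition~\ref{restrictedalpha}, $T_1^- \equiv T_2^-$, and by the inductive hypothesis, $\row(T_1^-) \overset{N(n-1)}{\equiv} \row(T_2^-)$.

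The main task is to lift this chain to one connecting $\row(T_1)$ and $\row(T_2)$ over the full alphabet $[n]$. The word $\row(T_i)$ differs from $\row(T_i^-)$ only by the insertion of $n$-letters at positions determined by the $n$-labeled outer corners of $T_i$. I plan to normalize $\row(T_i)$ by moving all $n$-letters into a canonical position, e.g., collected at the right end of the word, so that after normalization the two words share a common $n$-suffix and differ only on the $[n-1]$-prefix. Normalization uses $n \equiv nn$ (to duplicate or delete $n$-letters) together with the commutation rules $xny \equiv nxy$ and $ynx \equiv yxn$ (valid for $x < y < n$) to slide $n$-letters past pairs of smaller letters. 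With both words in canonical form, the inductive chain applied to the prefix (with the $n$-suffix held fixed) yields the desired connection, and reversing the normalization on the $T_2$ side completes the construction.

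To control intermediate word lengths during the normalization, I will invoke the sub-lemma highlighted in the introduction: any word $w$ can be connected to $\row(P(w))$ through intermediate words of length at most $|w|$. This lets me normalize an intermediate word to a row word whenever it threatens to become unwieldy. The main obstacle I anticipate is a careful length analysis showing that the $n$-manipulation fits within the $n(n+1)$ extra length budget. Some commutations require the presence of a neighboring pair $x < y$ or $y > x$ in $[n-1]$ that may not be immediately available and must be created by temporary duplications. Since any row word of a tableau on $[n]$ has length at most $n(n+1)/2$ (the shape being contained in the staircase $(n, n-1, \ldots, 1)$ because $T(i,j) \geq i+j-1$), the budget $n(n+1)$ allows room for roughly two such words plus the $n$-suffix, which should suffice. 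Making this accounting rigorous, and verifying that the duplications needed to enable each commutation can always be carried out without cascading, constitutes the technical heart of the proof.
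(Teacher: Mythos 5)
Your plan breaks down at the normalization step, and the failure is structural rather than a matter of bookkeeping. You propose to move every letter $n$ of $\row(T_i)$ into a canonical position (say, collected at the right end), so that the word takes the form $u\,n^k$ with $u$ a word on $[n-1]$, and then to run the inductive chain on the prefix $u$ while holding the $n$-suffix fixed. But in general \emph{no} word of that form lies in the $K$-Knuth class of $\row(T_i)$, so the canonical form you need does not exist. Concretely, take $n=2$ and $T_1=T_2=\young(12,2)$, so $\row(T_1)=212$. Every word $u\,2^k$ with $u$ a word on $\{1\}$ is weakly increasing and hence has $\lds=1$, whereas $\lds(212)=2$; since $\lds$ is a $K$-Knuth invariant (Proposition~\ref{lis_lds_equality}), no sequence of $K$-Knuth moves --- including the temporary duplications you mention, which cannot change $\lds$ --- collects the $2$'s at the end. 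The obstruction is general: for nonempty $u$ on $[n-1]$ the tableau $P(u\,n^k)$ carries its unique $n$ at the end of the first row, so by the outer-hook invariant (Proposition~\ref{prop:outer_hook}) no tableau whose first column contains $n$ has a reading word equivalent to any $u\,n^k$. So the restriction-and-lift architecture cannot get started, independently of how carefully the $n(n+1)$ budget is accounted.

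For contrast, the paper's proof does not induct on $n$ at all. It first reduces to the case where $T_1$ and $T_2$ are the insertion tableaux of two words $uav$ and $ubv$ differing by a single primitive move; it then shows (Lemma~\ref{lem:no of relevant letters}) that at most $n(n+1)(n+2)/6$ letters of a word on $[n]$ ``do something'' during Hecke insertion, because the $(i,j)$ entry can only be created or decreased and must end at a value of at least $i+j-1$. Deleting the inert letters of $u$ and $v$ leaves a pair of words of length at most $n(n+1)(n+2)/3+3$ still differing by one move and still inserting to $T_1$ and $T_2$, and Lemma~\ref{lem:equiv within class} --- the sub-lemma you correctly identified --- connects each of them to the corresponding row word without increasing length. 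The counting of active insertion positions is the ingredient your proposal is missing, and it replaces, rather than supplements, the attempted induction on the alphabet.
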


Computer evidence suggests that the bound in Theorem~\ref{thm:bound on length}
can be tightened to the largest size of a tableau
in the $K$-Knuth equivalence class, where the
\emph{size} of a tableau~$T$ of shape~$\lambda$
is the number of boxes of $\lambda$.{}

\begin{conjecture}\label{conj:bound on length}
Let $T$ and $T'$ be two tableaux with $T\equiv T'$,
and let $k$~be the largest size of a tableau
$K$-Knuth equivalent to $T$ or $T'$.
Then $\row(T)\stackrel{k}{\equiv}\row(T')$.
\end{conjecture}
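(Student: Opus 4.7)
The plan is to prove Conjecture 4.2 by constructing size-bounded $K$-Knuth chains, crucially allowing intermediate tableaux in the equivalence class that are larger than $T$ or $T'$. Theorem 4.1 already yields a polynomial bound in terms of the alphabet size, and the Section 4 lemma handles chains within a single insertion class; what is genuinely new here is controlling transitions between insertion classes while respecting the bound $k$. The example $4235124 \equiv 4523124$ discussed at the start of Section 4 shows that the bound $k$ must in general exceed $\max(|\row(T)|,|\row(T')|)$, so the rule $x \equiv xx$ must be exploited to pass through tableaux strictly larger than $T$ and $T'$.

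The framework is as follows. Because $T \equiv T'$, there is a chain $\row(T) = w_0, w_1, \ldots, w_r = \row(T')$ of words in which consecutive words differ by a single $K$-Knuth move. Setting $T_i \coloneqq P(w_i)$ produces a chain of straight tableaux $T = T_0, T_1, \ldots, T_r = T'$, all $K$-Knuth equivalent to $T$ and therefore of size at most $k$. The chain of $w_i$ may use arbitrarily long words, so the task is to replace it with a chain that respects the bound $k$.

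The heart of the proof is a local step: for each $i$, produce a \emph{size-controlled transition}, namely words $v_i$ and $v_i'$ of length at most $k$ with $P(v_i) = T_i$, $P(v_i') = T_{i+1}$, and $v_i, v_i'$ differing by one $K$-Knuth move. Granting this, the Section 4 lemma (stated in the overview of Section 4) connects $\row(T_i)$ to $v_i$ through words of length at most $|v_i| \leq k$, and similarly $v_i'$ to $\row(T_{i+1})$; concatenating these sub-chains over all $i$ yields $\row(T) \overset{k}{\equiv} \row(T')$.

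The main obstacle is constructing such $v_i, v_i'$. The intuition is to \emph{enlarge} a word in the insertion class of $T_i$ via repeated application of $x \equiv xx$ until it admits a $K$-Knuth move into the insertion class of $T_{i+1}$; the resulting intermediate insertion tableau may be strictly larger than $T_i$ and $T_{i+1}$, but it remains within the $K$-Knuth class and hence has size at most $k$. Proving that this enlargement can always be carried out within the size bound is the delicate combinatorial core of the conjecture, and will likely require new invariants or structural results beyond those in Section 2.6, perhaps a canonical way of choosing a representative word in each insertion class that is both bounded in length by $k$ and compatible with neighboring classes. An approach via $K$-jdt slides on skew tableaux faces a parallel obstacle, since the intermediate skew tableaux in a general $K$-jdt sequence are not guaranteed to have size at most $k$, so size control must be built into the chosen slide order.
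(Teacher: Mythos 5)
This statement is Conjecture~4.2 in the paper, not a theorem: the authors offer no proof, only the remark that it has been verified by computer for tableaux on $[n]$ with $n\leq 5$. Your proposal is likewise not a proof. Its outer framework (pass to the chain of insertion tableaux $T_i = P(w_i)$, connect each $\row(T_i)$ to a short representative via Lemma~4.4, and splice) is the same skeleton the authors use to prove the weaker Theorem~4.3 with the bound $N = n(n+1)(n+2)/3+3$, and it is sound as far as it goes. But everything then rests on the ``local step'' of producing words $v_i, v_i'$ of length at most $k$ that differ by one $K$-Knuth move and insert into $T_i$ and $T_{i+1}$ respectively, and you explicitly concede that you cannot establish this. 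That local step \emph{is} the conjecture; reducing the conjecture to it is not progress beyond what the paper already does.

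There is also a concrete flaw in the one heuristic you offer for the local step. You argue that enlarging a word by repeated applications of $x\equiv xx$ keeps you inside the $K$-Knuth class, so the intermediate \emph{insertion tableaux} have size at most $k$ --- but the bound in the conjecture is on the \emph{length of the intermediate words}, not on the size of their insertion tableaux. A word can be arbitrarily much longer than its insertion tableau (e.g.\ $11\cdots1$ inserts into a single box), so knowing that every intermediate tableau has at most $k$ boxes gives no control whatsoever on whether the intermediate words have length at most $k$. The counterexample $4235124\equiv 4523124$ already shows that some lengthening past the tableau sizes of the endpoints is forced; bounding that lengthening by the maximal tableau size in the class is exactly the open combinatorial content, and neither your proposal nor the paper supplies it.
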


Conjecture \ref{conj:bound on length} has been verified for tableaux on~$[n]$ with $n\leq 5$.

\subsection{Proof of Theorem \ref{thm:bound on length}}
We will use the following lemma, which concerns $K$-Knuth equivalence within an insertion class, to prove Theorem~\ref{thm:bound on length}.
Let $|w|$ denote the number of letters in a word~$w$. 

\begin{lemma}\label{lem:equiv within class}
If $w$ is a word and $P(w) = T$, then $w\overset{|w|}{\equiv}\row(T)$.
\end{lemma}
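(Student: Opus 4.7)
The plan is to induct on $|w|$, with the base case $|w|\leq 1$ immediate. For the inductive step, write $w = w'x$ where $x$ is the last letter of $w$ and set $T' = P(w')$, so that $T = T'\leftarrow x$. The inductive hypothesis applied to $w'$ supplies a $K$-Knuth chain from $w'$ to $\row(T')$ whose intermediate words all have length at most $|w'|$; appending $x$ to each word in this chain yields a chain from $w$ to $\row(T')\,x$ whose intermediate words all have length at most $|w|$. The remaining task is therefore to show that $\row(T')\,x \overset{|w|}{\equiv} \row(T)$.

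To bridge these, I would establish a row-insertion sub-lemma: if $R$ is a row of $T'$ and $y$ is a letter being Hecke-inserted into $R$ (yielding the updated row $R'$ and possibly bumping a letter $z$ into the next row), then one can transform the subword $Ry$ into either $R'$ (no bump) or $zR'$ (bump) by $K$-Knuth moves staying within a $(|R|+1)$-letter subword. Given this sub-lemma, I would iterate it along $\row(T')\,x = R_kR_{k-1}\cdots R_1 x$, processing $R_1$ first and propagating the bumped letter leftward one row at a time. After processing row $i$ in a bumping case, the current word has the form $R_k\cdots R_{i+1}z_iR_i'R_{i-1}'\cdots R_1'$, and since only one row's $(|R_i|+1)$-letter subword is being locally modified at any given moment, the total word length remains bounded by $|T'|+1 \leq |w|$ throughout.

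The sub-lemma itself is proved by direct inspection of the four Hecke insertion rules. Rule~(1), the append rule, is trivial because $Ry$ already equals $R'$ as a word. For rule~(3), with $z = r_k$ the smallest entry of $R$ strictly exceeding $y$, I would first commute $y$ leftward past $r_m, r_{m-1}, \ldots, r_{k+1}$ using the relation $yzx \equiv yxz$, next swap $r_{k-1}r_ky$ into $r_kr_{k-1}y$ using $xzy \equiv zxy$, and finally slide $r_k$ all the way to the left of the subword by repeated applications of $xzy \equiv zxy$ (with obvious adjustments when $k=1$ or $k=m$). Rule~(2), in the straight-shape setting, reduces to the case $y = r_m$, handled by a single application of $x \equiv xx$. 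Rule~(4) reduces to the case $y = r_{k-1}$, handled exactly as rule~(3) except that an application of $xyx \equiv yxy$ on the triple $r_{k-1}r_ky = yzy$ is inserted before the leftward slide of $r_k$.

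The main obstacle is justifying that the ``column-failure'' sub-cases of rules~(2) and~(4) do not actually arise when $T'$ is straight. The hard part will be tracking the column index $j$ at which the bumped letter originated in the row above and using the strict monotonicity of rows and columns in $T'$ to rule out configurations in which a cascade of bumps could produce a column violation at the insertion site. Once those sub-cases are excluded, each of the explicit move sequences above introduces no letters beyond those already in $Ry$, so the subword length bound $|R|+1$, and with it the global bound $|w|$, is maintained automatically, and the induction closes.
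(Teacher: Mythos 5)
Your overall architecture --- peel off the last letter, reduce to showing $\row(T')\,x \overset{|w|}{\equiv}\row(T)$, and then push the bumped letter from row to row via a local bumping lemma with careful length bookkeeping --- is the same as the paper's, and your treatment of Hecke rules (1) and (3) matches the paper's Lemma~\ref{lem:bump}(\ref{lem:bump_part1}). The gap is precisely the step you flag as the main obstacle: the column-failure sub-cases of rules (2) and (4) \emph{do} occur for straight tableaux, so no analysis of column indices can rule them out. The paper's own worked example $\young(1235,2346,6,7)\leftarrow 3$ exhibits a rule-(4) column failure (inserting $5$ into the second row would place a $5$ directly below the $5$ in position $(1,4)$), and $\young(13,2)\leftarrow 1$ exhibits a rule-(2) column failure (the $3$ bumped from row one cannot be appended to row two because the box directly above the target position still contains $3$, even though the row condition $2<3$ holds).

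Worse, in these cases your single-row sub-lemma is not merely unproved but false. In the rule-(2) example the row is $R=(2)$ and $y=3$, and the sub-lemma would assert $23\equiv 2$, which fails since $\lis(23)\neq\lis(2)$; in the rule-(4) example it would assert $23465\equiv 62346$, which fails since the two words insert into tableaux with different outer hooks. The true equivalence (e.g.\ $2131\equiv 213$) genuinely requires the letters of the row \emph{above} the insertion row --- which, in every such failure, contains the same letter $y$ in the offending column --- together with the relations $x\equiv xx$ and $xyx\equiv yxy$. This is exactly what the paper's Lemma~\ref{lem:bump}(\ref{lem:bump_part2}) and (\ref{lem:bump_part3}) provide: a two-row statement in which $y$ occupies the same column of both rows, reduced by standardization to the checkable equivalence $42313\equiv 42413$. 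To repair your argument you must replace the single-row sub-lemma by such a two-row statement in the failure cases; the rest of your proof (the reduction, the row-by-row iteration, and the bound $|T'|+1\leq|w|$) is sound.
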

We defer the proof of Lemma~\ref{lem:equiv within class} to Section \ref{section:proof of equiv within class} because it is fairly technical.

Assuming Lemma~\ref{lem:equiv within class},
our first step toward the theorem is the reduction to the case where there exist words $w_1$ and $w_2$ such that $w_2$ differs from $w_1$ by one $K$-Knuth move and such that $P(w_1) = T_1$ and $P(w_2) = T_2$.

Suppose the result had been shown for the special case above. Now let $T_1$ and $T_2$ be any two $K$-Knuth equivalent tableaux on~$[n]$, let $\row(T_1)=w_0,w_1,w_2,\ldots,w_r = \row(T_2)$ be a sequence of equivalent words differing by one $K$-Knuth move, and let $U_i=P(w_i)$. By the result for the special case $\row(U_i)\overset{N}{\equiv}\row(U_{i+1})$, and
the general case follows.

To prove the result for the special case, we can just construct words $w_1'$ and $w_2'$ with $|w_1'|,|w_2'|\leq N$ such that $w_2'$ differs from $w_1'$ by a single $K$-Knuth move, $P(w_1') = T_1$, and $P(w_2') = T_2$. Indeed, by Lemma \ref{lem:equiv within class}, we have $w_1'\overset{N}{\equiv}\row(T_1)$ and $w_2'\overset{N}{\equiv}\row(T_2)$, and the result then follows.

The construction of the words $w_1'$ and $w_2'$ relies on the following observation: if $t$ is a letter of a word $w = u_1tu_2$ for which $P(u_1t) = P(u_1)$, i.e., if $t$ ``does nothing'' in the insertion of $w$, then $P(u_1tu_2) = P(u_1u_2)$. More precisely, write $w_1 = uav$ and $w_2 = ubv$ where $(a,b)$ is a primitive pair,
following the definition preceding the algorithm of Section~3.
Let $u'$ (resp. $v'$) be the word obtained by deleting all letters in $u$ (resp. $v$) which ``do nothing'' in the insertion of both $w_1$ and $w_2$. The words $w_1' = u'av'$ and $w_2' = u'bv'$ then satisfy $P(w_1') = T_1$ and $P(w_2') = T_2$. To finish the proof of the theorem, we will need the following upper bound on the number of indices which ``do something'' in the insertion.

\begin{lemma}\label{lem:no of relevant letters}
If $w=w_1w_2\cdots w_k$
is a word on~$[n]$,
then there are at most
$n(n+1)(n+2)/6$
indices~$r$ such that
$P(w_1\cdots w_{r-1})\not=P(w_1\cdots w_{r-1}w_r).$
\end{lemma}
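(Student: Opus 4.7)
My plan is to define, for an increasing tableau $T$ with entries in $[n]$ and shape $\lambda$, the nonnegative quantity
\[
\Phi(T) \;=\; \sum_{(i,j) \in \lambda} \bigl(n+1-T(i,j)\bigr),
\]
and to establish two facts: (a)~each insertion of a letter that changes the tableau strictly increases $\Phi$ by at least $1$; and (b)~$\Phi(P(w)) \leq n(n+1)(n+2)/6$ for every word $w$ on $[n]$. Since $\Phi(\varnothing) = 0$, these two statements immediately bound the number of relevant indices $r$ by $n(n+1)(n+2)/6$.

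To establish (a), I will trace through the four Hecke insertion rules. Only rule~(1) modifies the shape of the tableau, by adjoining a new box with some value $v$, and only rule~(3) modifies an existing entry, by replacing some $y$ in a row with a strictly smaller $x$. Because every bumped value originates as an existing entry, every quantity appearing as $v$, $x$, or $y$ lies in $[n]$; thus $n+1-v \geq 1$ and $y-x \geq 1$. An insertion changes the tableau precisely when it terminates with an application of rule~(1) or invokes rule~(3) at least once (rules~(2) and~(4) leave the current row, and therefore the whole tableau up to that point, unchanged). In either case, $\Phi$ strictly increases.

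For step (b), I will use the standard lower bound $T(i,j) \geq i+j-1$, obtained by following an increasing chain along an arbitrary northwest lattice path from $(1,1)$ to $(i,j)$. This forces the support of $T$ to lie inside the staircase $\{(i,j) : i+j \leq n+1\}$, and consequently
\[
\Phi(T) \;\leq\; \sum_{\substack{i,j \geq 1 \\ i+j \leq n+1}} (n+2-i-j) \;=\; \sum_{k=2}^{n+1} (n+2-k)(k-1) \;=\; \frac{n(n+1)(n+2)}{6},
\]
by a routine substitution $m = k-1$ and evaluation via $\sum m$ and $\sum m^2$.

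The main subtlety sits in step (a): one has to be confident that Hecke insertion cannot silently alter the tableau beyond what rules~(1) and~(3) permit, so that every relevant insertion contributes at least $1$ to~$\Phi$. Once this case check is in hand, combining (a) and (b) yields the claimed inequality at once.
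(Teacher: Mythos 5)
Your proof is correct and rests on exactly the same two facts as the paper's: Hecke insertion only creates boxes or decreases existing entries, and every entry of an increasing tableau on $[n]$ satisfies $i+j-1 \leq T(i,j) \leq n$. The paper counts, for each box $(i,j)$ with $k=i+j-1$, the at most $n-k+1$ indices that can change that box and sums over boxes; your potential function $\Phi$ packages the identical per-box count as a monovariant and arrives at the very same sum $\sum_{k=1}^{n}k(n-k+1)=n(n+1)(n+2)/6$.
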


We may now conclude the proof of Theorem~\ref{thm:bound on length} using Lemma \ref{lem:no of relevant letters}. If we apply the lemma with $w = uav$, we see that the \emph{total} number of indices in $u$ and $v$ that ``do something'' in the insertion of $uav$ is at most $n(n+1)(n+2)/6$. The same is true with $uav$ replaced by $ubv$, so $|u'|+|v'|$ is at most $n(n+1)(n+2)/3$. We therefore get the result.

\begin{proof}[Proof of Lemma \ref{lem:no of relevant letters}]
Fix $i$ and $j$ and set $k=i+j-1$.
If $P(w_1\cdots w_{r-1})$ and $P(w_1\cdots w_r)$ have different $(i,j)$th entries, then the insertion of $w_r$ either creates a new entry at the $(i,j)$th position or decreases the $(i,j)$th entry. 
At the end the $(i,j)$th entry must be at least $k$, so there are at most $n-k+1$ indices in $w$ that change the $(i,j)$th entry.
For a fixed $k$, there are exactly $k$ pairs of $(i,j)$ with $k=i+j-1$, so the result follows from the identity
\[
\sum\limits_{k=1}^{n}k(n-k+1) = \frac{1}{6}n(n+1)(n+2). \qedhere
\]
\end{proof}

\subsection{Proof of Lemma \ref{lem:equiv within class}}\label{section:proof of equiv within class}
The proof of Lemma~\ref{lem:equiv within class} 
will consist of a careful analysis of the Hecke insertion
algorithm via a sequence of reductions.
In essence, computing an insertion tableau~$P(w)$ is the same as
making a sequence of $K$-Knuth moves to the word~$w$,
and none of these moves lengthens the word.

The first step of the proof
of Lemma~\ref{lem:equiv within class} is the reduction to the special case where $w = \row(T')y$ for some tableau $T'$ and some letter $y$. 
Assuming this case has been proved, write $w = w_1w_2\cdots w_k$. Let $T_i = P(w_1w_2\cdots w_i)$, $t_i = \row(T_i)$, and $u_i = t_iw_{i+1}\cdots w_k$. The assertion in the special case implies that $t_{i+1}\overset{i+1}{\equiv}t_iw_{i+1}$, 
so that $u_{i+1}\overset{k}{\equiv}u_i$. 
The general case then follows.

It will be useful to introduce the following terminology in order to further simplify the special case $w = \row(T')y$.
This terminology will only be used in this section.

\begin{definition}
A \emph{row} is a sequence $R=(r_1,\dots,r_m)$
of strictly increasing integers. The \emph{length}
of~$R$, denoted by $|R|$, is $m$.

Given two rows $R$ and~$S$, write $R>S$
if $S$ can be placed above $R$ to make a two-row
tableau; that is, $R>S$ if $|R|\leq|S|$ and $R_i>S_i$ 
for all~$i$. Write $R\geq S$ if $|R|\leq|S|$
and $R_i\geq S_i$ for all~$i$.

Given two words $w$ and $w'$,
write $w\overset{\bullet}{\equiv} w'$
if $w$ and $w'$ are equivalent through
words of length $\max(|w|,|w'|)$.
\end{definition}

The following lemma will allow us to bump
a letter from one row to the next
via sequences of $K$-Knuth moves
that do not increase the length of the word.
\begin{lemma} 
\label{lem:bump}
\begin{enumerate}
	\item\label{lem:bump_part1} Let $R$ be a row and $y$ a letter. Let $R'$ be the top row of the tableau $R\leftarrow y$ and let $x$ be the bottom row, taken to be empty if necessary. Then $Ry\overset{\bullet}{\equiv} xR'$.

	\item\label{lem:bump_part2} Let $S_1$, $S_2$, $R_1$, and $R_2$ be rows and let $x$ and $y$ be letters such that $R_1>S_1$, $|R_1|=|S_1|$, $R_2>S_2$, $\max(R_1) < y < \min(S_2)$, and $y < x < \min(R_2)$. Then,
\[
\ytableausetup{boxsize=2em}
\row\,\begin{ytableau}
S_1 & y & S_2 \\
R_1 & y & R_2 \\
x& \none & \none \\
\end{ytableau}
\overset{\bullet}{\equiv}
\row\,\begin{ytableau}
S_1 & y & S_2 \\
R_1 & x & R_2 \\
x& \none & \none \\
\end{ytableau}
\]

	\item\label{lem:bump_part3} Let $R$ and $S$ be rows. Let $R'$ be the top row of the tableau $R\leftarrow y$ and let $x$ be the bottom row, taken to be empty if necessary. Assume that $R>S$ and $R'\geq S$ hold but $R'>S$ does not hold. Then, $xRS\overset{\bullet}{\equiv} xR'S$, and hence $xRS\overset{\bullet}{\equiv} RyS$.

\end{enumerate}

\end{lemma}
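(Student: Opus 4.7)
The plan is to prove the three parts in the order (1), (3), (2), since Part (1) handles a single Hecke bump and Part (3) is obtained by combining Parts (1) and (2). Throughout, the main technical demand is to keep every intermediate word within the length budget of $\overset{\bullet}{\equiv}$.

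For Part (1), I would perform a case analysis on which of the four Hecke insertion rules is triggered by $R\leftarrow y$. The two adjoining cases (Rules (1) and (2)) are immediate: either $Ry$ literally equals $xR'=R\cdot y$, or $y=\max R$ and a single application of $yy\equiv y$ identifies the two words. In the successful-bump case (Rule (3)), with $r_i$ the bumped entry of $R$, I would slide $y$ leftward from the end of $Ry$ past $r_m,r_{m-1},\ldots,r_{i+1}$ using the relation $yxz\equiv yzx$ (applicable because $y<r_{j-1}<r_j$ throughout), and then slide the now-displaced $r_i$ leftward past $r_{i-1},\ldots,r_1$ using $xzy\equiv zxy$ (applicable because $r_{k-1}<r_{i-1}<r_i$ at each step), landing at $xR'$. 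In the failed-replacement case (Rule (4)), where $y=r_{i-1}$ already sits in $R$, I perform the same leftward slide of the trailing $y$ until it is adjacent to the existing $r_{i-1}=y$, then apply $xyx\equiv yxy$ to the emerging pattern $y\,r_i\,y$ to introduce a duplicate of $r_i$, which is then slid to the front in the usual way. Every move here is length-preserving, so $\overset{\bullet}{\equiv}$ holds automatically.

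For Part (3), the hypothesis ``$R>S$, $R'\geq S$, but $R'\not>S$'' singles out a column index $i$ where $R'_i=S_i$ while $R_i>S_i$; since $R'$ is the row produced by $R\leftarrow y$, this column is the site of the bump, with bumped letter $x=R_i$ and inserted letter $y$ satisfying $y=R'_i=S_i$. Decomposing $R=R_1r_iR_2$ and $S=S_1s_iS_2$, the word $xRS$ takes the form of the left-hand side of Part (2) after relabeling (with both $r_i$ and $s_i$ playing the role of the two $y$s, and $x$ as given), while $xR'S$ takes the form of the right-hand side. An application of Part (2) gives $xRS\overset{\bullet}{\equiv}xR'S$; combining this with $RyS\overset{\bullet}{\equiv}xR'S$, obtained from Part (1) applied to the prefix $Ry$, yields the second claim $xRS\overset{\bullet}{\equiv}RyS$ by transitivity within the common length budget.

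Part (2) is where I expect the main difficulty. The two endpoint words differ in a single letter (the middle $y$ versus $x$), and the hypotheses specify a rigid relative order: $S_1<R_1<y<x<\min R_2$, $y<\min S_2$, and $R_2>S_2$. My plan is to use commutations (of the type appearing in Part (1)) to bring the second $y$ leftward until the two $y$s are separated by a single element $\alpha\in R_2$, apply $xyx\equiv yxy$ to the resulting $y\alpha y$ pattern to produce $\alpha y\alpha$, collapse the redundant copy of $\alpha$ via $\alpha\alpha\equiv\alpha$ to drop below the target length, then expand the leading $x$ via $x\equiv xx$ and slide the second copy into the vacated middle position by reversing the commutations. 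The delicate point is length control at the $x\equiv xx$ step, which must be paired with a collapse so that the running word length never exceeds $\max(|w|,|w'|)$; the structural hypotheses are exactly what is needed to make each commutation legal and to guarantee that the final word is indeed $xR_1xR_2S_1yS_2$.
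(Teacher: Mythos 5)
Your treatments of parts (1) and (3) are essentially the paper's own arguments: the same case split on the Hecke insertion rules, the same two-stage commutation in the successful-bump case (slide $y$ leftward past $r_m,\dots,r_{i+1}$, then slide $r_i$ to the front), the same use of $xyx\equiv yxy$ on the pattern $y\,r_i\,y$ when $y=r_{i-1}$ already sits in $R$, and the same identification in part (3) of the unique column where $r_i'=s_i=y$ and $r_i=x$, so that $xRS$ and $xR'S$ become the two sides of part (2) and the final claim follows from part (1) applied to the prefix $Ry$. Those portions are fine.

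Part (2) is where your proposal breaks down, and it is the heart of the lemma. First, the move sequence is not executable as described: after bringing the two $y$'s into the configuration $y\,\alpha\,y$ and applying $xyx\equiv yxy$ to obtain $\alpha\,y\,\alpha$, the two copies of $\alpha$ are separated by $y$, so $\alpha\alpha\equiv\alpha$ cannot be applied; since $y<\alpha$, no single Knuth move makes them adjacent, and the later plan of duplicating $x$ and ``reversing the commutations'' is not well defined because those commutations were certified by order relations involving $y$, not $x$. Second, and more decisively, your argument requires a letter $\alpha\in R_2$, but $R_2$ may be empty --- and the case $R_2=S_2=\varnothing$ is exactly where the content of part (2) lives. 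The paper proceeds in the opposite direction: it uses part (1) twice as a reduction tool, first Hecke-inserting the letters of $S_2$ to strip off $R_2$ and $S_2$, then inserting suffixes into the tableau with row word $R_1'S_1'$ to reduce to $|R_1|=|S_1|=1$; what remains, after standardization, is the single five-letter identity $42313\overset{\bullet}{\equiv}42413$, which is verified directly. Your sketch never reaches, and cannot produce, this base case, so part (2) --- and with it part (3), which depends on it --- is not established.
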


We have technically only defined
$\row$ for tableaux that are increasing.
The meaning of~$\row$ in (\ref{lem:bump_part2}) for nonincreasing tableaux
is the obvious one: the concatenation of rows
proceeding from bottom to top.

Before we prove Lemma~\ref{lem:bump},
let us see how it implies
the special case $w = \row(T')y$
of Lemma~\ref{lem:equiv within class}.
Let $T=P(w)$ and decompose $T$
and $T'$ into rows:
$\row(T)=R_\ell R_{\ell-1}\dots R_1$ and
$\row(T')=R'_{\ell+1}R'_\ell\dots R'_1$.
Let $y_i$ be the (possibly empty) letter inserted
into $R_i$ during the Hecke insertion
algorithm for $T\leftarrow y$.
Letting $R'_0$ be empty,
it suffices to prove that
\begin{equation}\label{eq:bump}\tag{$\star$}
R_iy_iR'_{i-1}\overset{\bullet}{\equiv} y_{i+1}R'_iR'_{i-1}.
\end{equation}
Following the description of the Hecke
insertion algorithm given in Section~2
for the insertion of $y_i$ into~$R_i$,
Lemma~\ref{lem:bump}(\ref{lem:bump_part1})
implies that \eqref{eq:bump} holds
in the case where the $i$th insertion
results in a valid tableau,
and Lemma~\ref{lem:bump}(\ref{lem:bump_part3})
implies that \eqref{eq:bump} holds
in the case where the $i$th insertion 
does \emph{not} result in a valid tableau.

It remains to prove Lemma~\ref{lem:bump}.

\begin{proof}[Proof of Lemma~\ref{lem:bump}(\ref{lem:bump_part1})]
Let $R=(r_1,\dots,r_m)$. If $r_m<y$ then $R'=Ry$; if $r_m=y$ then $R'=R$.
Otherwise, let $r_i$ be the
smallest entry of~$R$ greater than $y$.
If $r_{i-1} < y$, 
then $r_1\cdots r_my\overset{\bullet}{\equiv}
r_1\cdots r_iyr_{i+1}\cdots r_m$,
by a sequence of $K$-Knuth moves
of the form $bca\equiv bac$ ($a<b<c$), and $r_1\cdots r_iy\overset{\bullet}{\equiv}r_ir_1\cdots r_{i-1}y$ by a sequence of $K$-Knuth moves
of the form $acb\equiv cab$ ($a<b<c$).
If $r_{i-1}=y$ then
\begin{align*}
r_1\cdots r_my&\overset{\bullet}{\equiv}
r_1\cdots r_{i-1}r_{i}yr_{i+1}\cdots r_m\\
&\overset{\bullet}{\equiv}
r_1\cdots r_iyr_ir_{i+1}\cdots r_m\\
&\overset{\bullet}{\equiv}
r_{i}r_1\cdots r_{i-2}yr_{i+1}\cdots r_m.\qedhere
\end{align*}
\end{proof}

\begin{proof}[Proof of Lemma~\ref{lem:bump}(\ref{lem:bump_part2})]
The proof will consist of reducing first to
the case where $S_2$ and $R_2$ are empty,
then to the case where, in addition,
$S_1$ and $R_1$ have length one.
(The case where $S_1$, $S_2$,
$R_1$, and $R_2$ are empty is trivial.)

To make the first reduction, observe that if
\[
xR_1yS_1y\overset\bullet\equiv xR_1xS_1y
\]
then 
\[
xR_1yS_1yR_2\overset\bullet\equiv xR_1xS_1yR_2.
\]
It now follows from (\ref{lem:bump_part1}),
by Hecke inserting in increasing order each element of $S_2$
into the one-row tableau whose row word is $S_1yR_2$,
that
$S_1yR_2S_2\overset\bullet\equiv R_2S_1yS_2$,
and hence that
\[
xR_1yR_2S_1yS_2\overset\bullet\equiv xR_1xR_2S_1yS_2.
\]

To make the second reduction,
let $R_1=(r_1,\dots,r_m)$,
let $S_1=(s_1,\dots,s_m)$,
and let $R_1'$ and $S_1'$
be the rows obtained from $R_1$
and $S_1$, respectively,
by removing the final entry.
If we assume the statement holds whenever $|R_1| = |S_1| = 1$, then we have $xr_mys_my\overset\bullet\equiv xr_mxs_my$.
Applying (\ref{lem:bump_part1})
to insert $xr_ms_mys_m$ into the tableau
with row word $R_1'S_1'$ gives the 
equivalences
\[
R_1'S_1'xr_mys_my\overset\bullet\equiv
R_1'S_1'xr_ms_mys_m\overset\bullet\equiv
xR_1yS_1y
\]
while inserting $xr_mxs_my$ into the same tableau gives
\[
R_1'S_1'xr_mxs_my\overset\bullet\equiv
xR_1xS_1y,
\]
so we get $xR_1yS_1y\overset\bullet\equiv xR_1xS_1y$, as needed.

Finally, the case where $S_2$ and $R_2$
are empty and $S_1$ and $R_1$ have length
one reduces by standardization
to the equivalence $42313\overset\bullet\equiv 42413$,
which the reader may verify.
\end{proof}

\begin{proof}[Proof of Lemma~\ref{lem:bump}(\ref{lem:bump_part3})]
Let $R'=(r_1',r_2',\dots,r_m')$ and
let $S=(s_1,\dots,s_\ell)$.
Let $r'_i$ be the letter of~$R'$ that was modified
during the insertion of $y$ into $R$,
so that $r'_i=y$, $r_i=x$, and $r'_j=r_j$ for $i\neq j$.
The hypothesis that $R'\geq S$
holds but $R>S$ does not hold means that
$s_i=r_i=y$.
Statement (\ref{lem:bump_part2}) implies that $xRS\overset\bullet\equiv xR'S$. By  (\ref{lem:bump_part1}), $xR'\overset\bullet\equiv Ry$, so $xRS\overset\bullet\equiv RyS$.
\end{proof}

\section{Right-Alignable Tableaux}
In this section, we give a new family of URTs called
\textit{right-alignable tableaux}. As a corollary, we will deduce that superstandard and rectangular tableaux are URTs.

Although we have considered so far only fillings of straight shapes, in this section we will think about
fillings of more general shapes, using the formulation described in Section 2.1. We will use the terminology \textit{filling} and \textit{increasing filling} instead of ``tableau'' and ``increasing tableau'' to emphasize that we are discussing more general shapes.

\begin{definition}
Let $\lambda=(\lambda_1,\dots,\lambda_\ell)$ 
be a straight shape and let $T:\lambda\to\N$
be a tableau of shape~$\lambda$.
The \emph{right alignment} 
of $\lambda$ is the shape 
$\lambda_R = \{(i,j)\,:\,(\lambda_1-\lambda_i)<j\leq\lambda_1\}$.
The \emph{right alignment} of~$T$
is the filling $T_R:\lambda_R\to\N$ defined by
$T_R(i,j) = T(i,j-\lambda_1+\lambda_i)$.

A tableau~$T$ is \emph{right-alignable}
if the right alignment~$T_R$ is an increasing filling.
\end{definition}
\begin{example}
The tableau
\ytableausetup{mathmode, boxsize=1.4em}
\begin{gather*}
T = \begin{ytableau} 
1 & 2 & 3 \\
3 & 5 \\
4
\end{ytableau}
\quad\text{ has right alignment} \ 
T_R = \begin{ytableau} 
1 & 2 & 3 \\
\none & 3 & 5 \\
\none & \none & 4
\end{ytableau}.
\end{gather*}
Hence $T$~is not right-alignable.
\end{example}

We will prove the following theorem in the next
three sections.
\begin{theorem} \label{theorem1}
Every right-alignable tableau is a URT.
\end{theorem}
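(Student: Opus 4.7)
The plan is to introduce a notion of \emph{repetitive reading word} $r(T)$ adapted to the right-alignment structure of $T$, and then to combine it with the $K$-Knuth invariants from Section~\ref{section:invariants} to force uniqueness within the equivalence class of $T$. The first step is to define $r(T)$ by reading the entries of $T$ in an order that respects the weakly-southwest condition for reading words but with strategic repetitions of certain entries chosen so that they also record the column-increasing property of the right-aligned shape $\lambda_R$. The right-alignability hypothesis is exactly what makes such a reading word legitimate, and the $K$-Knuth relation $x \equiv xx$ then gives $r(T) \equiv \row(T)$.

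The second step is to show that if $T' \equiv T$, then $T' = T$. I would proceed by induction on the number of rows of $T$. The base case of a single row is immediate from the invariance of $\lis$, $\lds$, and the outer hook (Proposition~\ref{lis_lds_equality}, Theorem~\ref{row_col_lis_lds}, Proposition~\ref{prop:outer_hook}), since a one-row increasing tableau is completely determined by its multiset of entries. For the inductive step, I would use the restriction invariant (Proposition~\ref{restrictedalpha}) combined with the repetitive reading word: the repetitions in $r(T)$, when interpreted on appropriate sub-intervals $[a,b]$, should be forceful enough to pin down the bottom row (or, via the transpose invariant, the leftmost column) of $T'$ to match that of $T$. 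With this row fixed, the tableau obtained by removing it is again right-alignable with fewer rows, so the inductive hypothesis applies.

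The principal obstacle will be ensuring that the repetitive reading word actually does pin down the bottom row of any equivalent tableau. The four $K$-Knuth moves, especially $x \equiv xx$ and $xyx \equiv yxy$, can in principle rearrange letter multiplicities in nontrivial ways, and a priori an equivalent tableau $T'$ might have a different shape or different entries along its bottom row. Right-alignability, encoded through the repetitions in $r(T)$, is precisely what rules this out: an equivalent tableau must share the outer hook, values of $\lis$ and $\lds$, all restrictions $T'|_{[a,b]}$, and must itself admit an analogous repetitive reading word, which together force the shape and entries to agree with $T$. Making this precise — in particular, defining $r(T)$ carefully and verifying that its equivalence class contains no tableau other than $T$ — will constitute the bulk of the technical work, with Propositions~\ref{lis_lds_equality}, \ref{prop:outer_hook}, and \ref{restrictedalpha} providing the key tools for the inductive peeling argument.
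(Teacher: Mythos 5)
Your high-level instinct --- to exploit a notion of reading word with repetitions tied to the right-aligned shape --- is the same one the paper uses, but the proof you sketch has a genuine gap at exactly the point you flag as the ``principal obstacle.'' The invariants you propose to lean on (the outer hook, $\lis$, $\lds$, restrictions to subalphabets, and the transpose) cannot pin down the bottom row of an equivalent tableau $T'$, because they fail to separate distinct equivalent tableaux even in the simplest cases: the two tableaux of Example~\ref{example:3124} have the same outer hook, the same $\lis$ and $\lds$, and equivalent restrictions to every interval, yet they are different. Moreover, Proposition~\ref{restrictedalpha} only yields $T|_{[a,b]}\equiv T'|_{[a,b]}$, not equality, so an inductive peeling argument built on these tools alone has nothing to get started with; some new structural consequence of right-alignability must be isolated and proved, and your sketch never says what that consequence is or how the ``strategic repetitions'' deliver it.

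The paper supplies that missing input with two statements absent from your proposal. First, the relevant object is not a single word $r(T)$ obtained from $\row(T)$ by duplicating letters (which would only invoke $x\equiv xx$), but the entire \emph{set} of repetitive reading words of the right alignment $T_R$; the key fact (Proposition~\ref{prop:f_compat_invar}) is that for a filling of a northwest- and southeast-hook-closed shape this set is a union of $K$-Knuth classes, which requires checking all four $K$-Knuth moves against the hook-closure geometry, not just $x\equiv xx$. Second, one needs a uniqueness statement (Lemma~\ref{lem:unique_f_compat_tableau}): at most one \emph{straight} tableau has its row word among the repetitive reading words of a northeast- and northwest-hook-closed filling, proved by showing that the row index of each letter is detected by the longest strictly decreasing subword beginning at that letter. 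With these two facts the theorem is immediate and no row-by-row induction is needed: if $T'\equiv T$ then $\row(T')$ is a repetitive reading word of $T_R$, hence $T'=T$. Without them, your argument does not close.
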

The following two corollaries follow easily from 
Theorem~\ref{theorem1}.
\begin{corollary}
Every superstandard tableau is a URT.
\end{corollary}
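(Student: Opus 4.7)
The plan is to apply Theorem~\ref{theorem1} directly by showing that every superstandard tableau is right-alignable. Given a superstandard tableau~$T$ of shape $\lambda=(\lambda_1,\ldots,\lambda_\ell)$, I would form the right alignment $T_R$ and verify that it is a valid increasing filling, i.e.\ that entries strictly increase along rows and down columns in~$T_R$.

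The row condition is immediate: right-alignment only shifts row~$i$ horizontally by $\lambda_1-\lambda_i$ positions, so the entries of each row appear in~$T_R$ in the same left-to-right order as in~$T$, hence strictly increase. The key step is the column condition. Suppose $(i-1,j)$ and $(i,j)$ are both boxes of $\lambda_R$; then I must compare $T_R(i-1,j)=T(i-1,\,j-\lambda_1+\lambda_{i-1})$ with $T_R(i,j)=T(i,\,j-\lambda_1+\lambda_i)$. Here the definition of superstandardness kicks in: row~$i-1$ contains only the integers $\lambda_1+\cdots+\lambda_{i-2}+1,\ldots,\lambda_1+\cdots+\lambda_{i-1}$, while row~$i$ contains only integers strictly greater than $\lambda_1+\cdots+\lambda_{i-1}$. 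So
\[
T_R(i-1,j)\;\leq\;\lambda_1+\cdots+\lambda_{i-1}\;<\;\lambda_1+\cdots+\lambda_{i-1}+1\;\leq\;T_R(i,j),
\]
regardless of how far row~$i$ has been shifted to the right. Thus $T_R$ is an increasing filling, so $T$ is right-alignable, and Theorem~\ref{theorem1} yields that $T$ is a URT.

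Since the construction of a superstandard tableau rigidly separates the value ranges of consecutive rows, no obstacle arises: the shift in right-alignment cannot produce a violation of column-strictness because any entry of row~$i$ already dominates every entry of row~$i-1$ in~$T$ itself. The only point that deserves a moment of care is checking that the right-alignment of a weakly decreasing partition shape does indeed leave every box of row~$i$ directly below either a box of row~$i-1$ or no box at all; this follows from $\lambda_{i-1}\geq\lambda_i$, which implies that the aligned row~$i$ sits entirely under the aligned row~$i-1$.
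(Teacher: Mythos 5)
Your proof is correct and follows exactly the route the paper intends: the corollary is deduced from Theorem~\ref{theorem1} by observing that a superstandard tableau is right-alignable, since every entry of row~$i$ strictly exceeds every entry of row~$i-1$, so the horizontal shifts in the right alignment cannot break column-strictness. The paper leaves this verification implicit (``follows easily''), and your write-up simply supplies the details.
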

\begin{corollary}\label{cor:rect_URT}
Every tableau of rectangular shape is a URT.
\end{corollary}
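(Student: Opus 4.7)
The plan is to observe that a rectangular tableau is trivially right-alignable and then invoke Theorem~\ref{theorem1} directly. More precisely, suppose $T$ has rectangular shape $\lambda=(a,a,\ldots,a)$ with $b$ rows, so that $\lambda_i=a$ for all $1\leq i\leq b$. By the definition of right alignment,
\[
\lambda_R = \{(i,j) : (\lambda_1-\lambda_i)<j\leq\lambda_1\} = \{(i,j) : 0<j\leq a\} = \lambda,
\]
so the shape is unchanged by right-alignment. Moreover, $T_R(i,j) = T(i,j-\lambda_1+\lambda_i) = T(i,j)$, so $T_R = T$ as fillings.

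Since $T$ is an increasing tableau by assumption, $T_R$ is an increasing filling, which means $T$ is right-alignable. Theorem~\ref{theorem1} then implies that $T$ is a URT.

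There is essentially no obstacle here: the entire content of the corollary is the observation that the right-alignment operation is the identity on rectangular shapes, at which point the conclusion is immediate from Theorem~\ref{theorem1}. The only thing to be slightly careful about is that the shape comparison uses the fact that all row lengths~$\lambda_i$ equal~$\lambda_1$, which is precisely the defining property of a rectangle.
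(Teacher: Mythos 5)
Your proof is correct and is exactly the argument the paper intends: the paper simply notes the corollary ``follows easily from Theorem~\ref{theorem1},'' and your observation that right alignment is the identity on rectangular shapes (so every rectangular tableau is right-alignable) is the easy step being invoked.
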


Theorem~\ref{theorem1} is by no means sharp:
there are URTs that are not right-alignable.
For example, a minimal tableau is right-alignable
if and only if it is rectangular.

\subsection{Hook Closure Properties}
In this section, we will generalize the idea of hook closure as defined in~\cite[Section 5]{BuSa13}, which we will call \emph{northeast-hook closure}. This will allow us to ensure boxes exist in certain positions. Recall that the $(i,j)$th entry of a tableau
lies in the $i$th row and $j$th column.

\begin{definition} \label{def:hookClosed}
\label{hook-closed}
A shape $\lambda$ is \emph{northwest-hook-closed} if
it is closed under forming northwest hooks:
whenever $x=(i_1,j_1)$ and $y=(i_2,j_2)$ are boxes of $\lambda$
such that $i_1\geq i_2$ and $j_1 \leq j_2$
then $\lambda$ contains the boxes 
$(r,j_1)$ for $i_1\geq r\geq i_2$ and
$(i_2,c)$ for $j_1 \leq c\leq j_2$.
\ytableausetup{boxsize=\boxsize}
\[
\begin{ytableau}
{}& & &y \\
x&\none&\none&\none
\end{ytableau}
\]
We define \emph{northeast-hook-closed} and \emph{southeast-hook-closed} shapes in a similar way.
\end{definition}
One can define \emph{southwest-hook-closed} shapes as well, but our discussion will not concern this hook closure property.

\begin{example}
Of the shapes below, only the first is northwest-hook-closed,
only the second is northeast-hook-closed,
and only the third is southeast-hook-closed.
The fourth satisfies none of the three hook-closure properties defined.
\[
\begin{ytableau}
\none& & \none \\
\none& & & \\
\none& & \\
\none& & \none
\end{ytableau}\,\,
\begin{ytableau}
\none & \none & \none & \none & \\
\none &&&&\none \\
\none & \none & & & \none
\end{ytableau}\qquad
\begin{ytableau}
\none & \\
\none & \\
&&
\end{ytableau}\qquad
\begin{ytableau}
\none & & \none \\
& & \\
\none& & \none
\end{ytableau}
\]
Proposition~\ref{prop:f_compat_invar} will concern shapes that are
northwest- and southeast-hook closed. 
Such shapes are the reflections
of skew shapes across a vertical axis
like the examples shown below.
\[
\begin{ytableau}
{} & & &\\
\none & & &\\
\none &\none& &\\
\none&\none&\none&
\end{ytableau}\qquad
\begin{ytableau}
{} & &\none&\none&\none\\
\none & & &\none\\
\none&\none&\none&
\end{ytableau}
\]
\end{example}

Lemma~\ref{lem:unique_f_compat_tableau} and \ref{lem:unique_f_compat_tableau_tech} will concern shapes that are northeast- and northwest-hook closed. We will prove a geometric property of such shapes, which will be used later.

\begin{lemma}\label{lemma4}
Let $\lambda$ be a northeast- and northwest-hook-closed shape with at least one box in the first row. If $(i_0,j_0)\in\lambda$, then $(i,j_0)\in\lambda$ for any $1\leq i\leq i_0$.
\end{lemma}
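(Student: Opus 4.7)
The plan is to reduce to two direct applications of the hook-closure definitions, by choosing any box in the first row of $\lambda$ and comparing its column index to $j_0$. The key observation is that since the two hook-closure properties are mirror images of each other, together they can fill in the full column above any given box, provided we have a box in the top row to anchor the argument.

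By hypothesis, some box $(1,k) \in \lambda$ exists in the first row. I would split into two cases depending on whether $k \geq j_0$ or $k < j_0$. In the first case, setting $x = (i_0, j_0)$ and $y = (1, k)$ in Definition~\ref{hook-closed} gives $i_1 = i_0 \geq 1 = i_2$ and $j_1 = j_0 \leq k = j_2$, so the northwest-hook closure of $\lambda$ forces the boxes $(r, j_0)$ for $1 \leq r \leq i_0$ to lie in $\lambda$, which is exactly the conclusion. In the second case, setting $x = (i_0, j_0)$ and $y = (1, k)$ and invoking the analogous northeast-hook closure property ($i_0 \geq 1$ and $j_0 \geq k$) similarly forces the vertical leg $(r, j_0)$ for $1 \leq r \leq i_0$ into $\lambda$.

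There is essentially no obstacle here; the only minor care needed is to correctly transcribe the ``analogous'' northeast-hook closure definition (which the paper defines only implicitly), namely that whenever $x = (i_1,j_1),\, y = (i_2,j_2) \in \lambda$ satisfy $i_1 \geq i_2$ and $j_1 \geq j_2$, then $\lambda$ contains $(r, j_1)$ for $i_2 \leq r \leq i_1$ and $(i_2, c)$ for $j_2 \leq c \leq j_1$. Given that reading, the lemma follows in two lines without any induction or casework beyond the dichotomy $k \geq j_0$ versus $k < j_0$. The role of the ``at least one box in the first row'' hypothesis is precisely to ensure that such an anchor box $(1,k)$ exists.
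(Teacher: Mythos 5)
Your proof is correct and is essentially the paper's own argument: anchor at a first-row box $(1,k)$, compare $k$ with $j_0$, and apply the appropriate hook-closure property, with your transcription of northeast-hook-closure being the intended one. In fact your case assignment (northwest-hook-closure when $k\ge j_0$, northeast-hook-closure when $k<j_0$) is the correct one given Definition~\ref{def:hookClosed}; the paper's one-line proof appears to have the two cases swapped.
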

\begin{proof}
Suppose $\lambda$ contains $(1,a)$. If $a\leq j_0$, then apply the northwest-hook-closure property of $\lambda$. Otherwise, apply the northeast-hook-closure property.
\end{proof}

\subsection{Repetitive reading words}
In this section, we define the notion of a repetitive reading word of a filling. This will be an invariant of $K$-Knuth equivalence for increasing fillings of northwest- and southeast-hook-closed shapes, so the study of such a notion will be useful.

First recall the definition of reading word given in Section \ref{sec:reading_word}. We can easily generalize this definition to any filling $F$ of any (not necessarily straight or skew) shape in the obvious way so that it coincides with the definition for increasing straight or skew tableaux. For example, define the row word of filling $F$, $\row(F)$, to be the word obtained by reading the labels of the boxes of $F$ from left to right along each row, starting from the bottom row and moving upward. We can now define repetitive reading words for a general filling $F$.

\begin{definition}
If the shape of filling $F$ is empty, then the empty word is the only repetitive reading word of $F$. In general, the word $w=tw'$ is a repetitive reading word for $F$ if and only if some southwest-most box $\beta$ of $F$ contains the letter $t$ and $w'$ is a repetitive reading word of either $F$ or $F$ with box $\beta$ removed. 
\end{definition}

\begin{example}
Consider $F=$\begin{ytableau} 1 & 2 & 4 \\ 3 & 5 & 6 \\ \none & \none & 7\end{ytableau}. Any repetitive reading word of $F$ must have length at least seven. Some examples of words of this length are 7356124 (the row word of $F$), 3152764 (the column word of $F$), and 3715624. Other repetitive reading words include 3357516264 and 3751576244.
\end{example}


The following proposition clearly follows from the definition.

\begin{proposition}\label{prop:reading_compat}
Any reading word for a filling $F$ is a repetitive reading word of $F$. In particular, the row word $\row(F)$ of a filling is a repetitive reading word.
\end{proposition}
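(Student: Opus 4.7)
The plan is to prove the proposition by induction on the number of boxes of $F$, using the recursive definition of repetitive reading word directly. The base case is trivial: if $F$ is empty, the only reading word is the empty word, which is by definition the only repetitive reading word.

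For the inductive step, suppose $w = w_1 w_2 \cdots w_n$ is a reading word of a nonempty filling $F$, and let $\beta$ be the box of $F$ whose label appears in position $1$ of $w$. The key observation is that $\beta$ must be a southwest-most box of $F$: if some other box $\alpha \neq \beta$ of $F$ were weakly southwest of $\beta$, then by the reading-word condition the label of $\alpha$ would appear before the label of $\beta$ in $w$, contradicting the fact that $w_1$ is the label of $\beta$. Next I would show that $w' = w_2 \cdots w_n$ is itself a reading word of the filling $F' := F \setminus \{\beta\}$. For any two boxes $\alpha_1, \alpha_2 \in F'$ with $\alpha_1$ weakly southwest of $\alpha_2$, their labels appear in $w$ in the correct order (because $w$ is a reading word of $F$), and neither equals $w_1$, so they appear in $w'$ in the same relative order. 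Thus $w'$ is a reading word of $F'$ with strictly fewer boxes, so by the induction hypothesis $w'$ is a repetitive reading word of $F'$. By the recursive definition of repetitive reading word (choosing the ``$F$ with box $\beta$ removed'' branch), $w = w_1 w'$ is then a repetitive reading word of $F$.

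For the second assertion, it suffices to verify that $\row(F)$ satisfies the reading-word condition. If $\alpha = (i_1,j_1)$ is weakly southwest of $\beta = (i_2,j_2)$, meaning $i_1 \geq i_2$ and $j_1 \leq j_2$, then either $i_1 > i_2$, in which case row $i_1$ is read before row $i_2$ in the row word (which proceeds bottom-to-top), or $i_1 = i_2$ and $j_1 \leq j_2$, in which case $\alpha$ is read before $\beta$ within their common row (which proceeds left-to-right). Either way the label of $\alpha$ precedes the label of $\beta$ in $\row(F)$, so $\row(F)$ is a reading word, and hence a repetitive reading word by the first part.

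No step poses a real obstacle; the only point that deserves care is matching the vocabulary, namely verifying that ``the box whose label is $w_1$'' satisfies the exact definition of ``southwest-most'' that the recursive definition of repetitive reading word invokes. Everything else is bookkeeping and follows from the definitions, consistent with the authors' remark that the proposition ``clearly follows.''
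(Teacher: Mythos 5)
Your proof is correct and is exactly the routine induction the authors have in mind; the paper omits the argument entirely, stating only that the proposition ``clearly follows from the definition.'' The one point worth care---that the box contributing $w_1$ is southwest-most and that deleting it leaves a reading word of the smaller filling---is handled properly in your write-up.
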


We will prove the following proposition, which gives new invariants of $K$-Knuth equivalence relations.

\begin{proposition}\label{prop:f_compat_invar}
Let $F$ be an increasing filling of a northwest- and southeast-hook-closed shape $\lambda$.
If $u\equiv v$ and $u$ is a repetitive reading word of $F$, then $v$ is also a repetitive reading word of $F$.
\end{proposition}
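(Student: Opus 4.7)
The plan is to reduce to showing that the repetitive-reading-word property is preserved under each of the four primitive $K$-Knuth moves: the commutations $xzy \equiv zxy$ and $yxz \equiv yzx$ (for $x<y<z$), the duplication $x \equiv xx$, and the braid move $xyx \equiv yxy$. Since these moves generate $K$-Knuth equivalence, an induction on the length of the chain of single moves connecting $u$ and $v$ reduces the proposition to these four local checks. The direction of each move is symmetric, so in each case it is enough to show that one application of the move sends a repetitive reading word of $F$ to another.

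My main tool for the local checks will be a geometric lemma about the subfillings arising during a peeling. Since $\lambda$ is simultaneously NW- and SE-hook-closed, it is the reflection of a skew shape across a vertical axis: its nonempty rows are contiguous intervals whose left endpoints and right endpoints are both weakly increasing from top to bottom. A direct check shows that removing a southwest-most box preserves this structure, so by induction every subfilling $G$ that appears during a peeling of $F$ also has this form. Consequently the southwest-most boxes of $G$ lie on a strict NW-to-SE diagonal, and the increasing condition on $G$ then forces them to carry pairwise distinct labels.

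Armed with this lemma, the duplication move is easy. The duplicate direction reads the same box twice, keeping it the first time. For the collapse direction, given two consecutive $x$'s in the peeling of $u_1xxu_2$, the two corresponding southwest-most boxes appear either in a single subfilling (if no box was removed between them) or in adjacent subfillings differing by one removal. The distinct-labels lemma, combined with the observation that no box with the same label can be uncovered by removing a strictly north or strictly west neighbor, forces the two boxes to coincide, so the two peeling steps merge into one step that realizes $u_1xu_2$. For the commutation and braid moves, I would let $\alpha_1, \alpha_2, \alpha_3$ denote the boxes the original peeling assigns to the three letters involved, together with their keep/remove flags; in the generic case all three are southwest-most in $F_{|u_1|}$ and one simply reorders them, together with their flags, to realize the target triple.

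The hard part will be the boundary case where one of the $\alpha_i$ becomes southwest-most only after an earlier $\alpha$ has been removed; the most delicate instance is when $\alpha_1$ sits directly west of $\alpha_2$ in the same row, so that $\alpha_2$ is not southwest-most in $F_{|u_1|}$ but only in $F_{|u_1|}\setminus\alpha_1$. Here the naive reordering that would place $z$ before $x$ is blocked. The plan is to use the NW- and SE-hook-closed hypothesis, together with the constraint that the intermediate letter $y$ must also come from a southwest-most box at the appropriate step, to show that in such a configuration one can either modify the peeling of the prefix $u_1$ to arrive at a subfilling in which $\alpha_2$ is already southwest-most, or else show the configuration is incompatible with the existence of the required $y$-box. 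Carrying out this reshuffling rigorously, together with the parallel case analysis for the braid move $xyx \equiv yxy$, is where I expect the bulk of the bookkeeping to lie.
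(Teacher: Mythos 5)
Your reduction to single $K$-Knuth moves and your treatment of the duplication move are sound, and your geometric observations (southwest-most boxes of a subfilling are pairwise strictly northwest/southeast of one another, hence carry distinct labels; a box uncovered by a single removal loses only a same-row or same-column neighbor, hence one with a different label) are correct and suffice for $x\equiv xx$. But the substantive content of the proposition lies in the commutation moves $xzy\equiv zxy$, $yxz\equiv yzx$ and the braid move $xyx\equiv yxy$, and for exactly these you stop at a plan: you correctly isolate the dangerous configuration (the $x$-box directly west of the $z$-box, so that the $z$-box is southwest-most only after the $x$-box is removed) and then say you intend to ``either modify the peeling of the prefix or show the configuration is incompatible with the existence of the required $y$-box,'' deferring the bookkeeping. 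That deferred step is the proof; without it the argument is not complete. For the braid move you give no mechanism at all for seeing why $qpq$ remains readable.

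The paper closes this gap with one lemma that you are missing: in an increasing filling of a northwest- and southeast-hook-closed shape, two \emph{distinct} boxes carrying the same label must have at least two letters read between them in any repetitive reading word, because hook-closure supplies both opposite corners of the rectangle they span and those corners must be cleared in between. This immediately disposes of $pp\to p$ and of the braid move (the two $p$'s in $pqp$ are separated by one letter, hence are the same box, so neither of the boxes for $p$ and $q$ is weakly southwest of the other and the letters commute freely). For the commutation move it powers precisely your hard case: if the $z$-box sits directly east of the $x$-box, then the $y$-box is forced directly above the $z$-box, northwest-hook-closure produces the corner box above the $x$-box, and that corner must be read strictly between the $x$ and the $y$ --- impossible, since the only intervening letter is the $z$ from a different box. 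I would encourage you to either prove this intervening-letters lemma and rebuild your case analysis around it, or carry out in full the reshuffling argument you sketch; as written, the proposal establishes only the easy quarter of the statement.
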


Note that this proposition says that if the shape of $F$ is northwest and southeast-hook-closed, then the set of repetitive reading words of $F$ is a union of $K$-Knuth classes. 
Before proving the proposition, we need the following technical lemma.
\begin{lemma}\label{lem:f_compat_invar_tech}
Let $F$ be an increasing filling of a northwest- and southeast-hook-closed shape $\lambda$. If $\alpha\neq\beta$ are boxes in $\lambda$ and $F(\alpha) = F(\beta)$, then there are at least two letters between any $F(\alpha)$ and any $F(\beta)$ in a repetitive reading word of $F$.
\end{lemma}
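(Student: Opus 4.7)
The approach is to identify two auxiliary boxes $\gamma_1,\gamma_2\in\lambda$ whose removal is forced to occur strictly between any output of $t$ from $\alpha$ and any output of $t$ from $\beta$ in a repetitive reading word. Since $F$ is increasing and $F(\alpha)=F(\beta)$, the boxes $\alpha$ and $\beta$ must be incomparable in the strict northwest--southeast order; without loss of generality I take $\alpha=(i_1,j_1)$ and $\beta=(i_2,j_2)$ with $i_1>i_2$ and $j_1<j_2$, and set $t:=F(\alpha)=F(\beta)$. Applying northwest-hook-closure to the pair $(\alpha,\beta)$ gives the box $\gamma_1:=(i_2,j_1)\in\lambda$, and applying southeast-hook-closure (defined analogously to the northwest case) gives $\gamma_2:=(i_1,j_2)\in\lambda$. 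Since $F$ strictly increases down column $j_1$ and across row $i_1$, we have $F(\gamma_1)<t<F(\gamma_2)$, so $\gamma_1$ and $\gamma_2$ are distinct both from each other and from $\alpha,\beta$.

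The central observation is that $\alpha$ is weakly southwest of each of $\beta$, $\gamma_1$, and $\gamma_2$. Consequently, as long as $\alpha$ remains in the current filling, none of these three boxes can be a southwest-most box, and in particular none can contribute a letter to the repetitive reading word at that step. Furthermore, both $\gamma_1$ and $\gamma_2$ are themselves weakly southwest of $\beta$, so each must be removed before $\beta$ is able to become southwest-most.

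Now fix any two positions $p_1<p_2$ in the repetitive reading word at which $t$ is output from $\alpha$ and from $\beta$, respectively; the inequality $p_1<p_2$ is forced by the previous paragraph, because $\beta$ cannot contribute a letter until $\alpha$ has already been removed. Let $p\in[p_1,p_2)$ denote the position at which $\alpha$ is actually removed. The removals of $\gamma_1$ and $\gamma_2$ each require $\alpha$ to be absent, and both must precede $p_2$, so they occur at two distinct positions $q_1\neq q_2$ lying strictly between $p$ and $p_2$. Since $p\geq p_1$, these two positions also lie strictly between $p_1$ and $p_2$, so at least two letters occur strictly between positions $p_1$ and $p_2$ in the word, which is exactly what the lemma asserts.

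The main subtlety is unpacking the southeast-hook-closure property, which is only defined by analogy in the paper, so as to correctly extract the box $\gamma_2=(i_1,j_2)$ that lies strictly southeast of both $\alpha$ and $\beta$. Once $\gamma_1$ and $\gamma_2$ are in hand, the argument reduces to recognizing that the removal of $\alpha$ is a common bottleneck blocking the removals of $\beta$, $\gamma_1$, and $\gamma_2$, after which the positional counting is immediate.
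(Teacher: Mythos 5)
Your proof is correct and follows essentially the same route as the paper: both identify the two corner boxes $(i_2,j_1)$ and $(i_1,j_2)$ of the rectangle spanned by $\alpha$ and $\beta$ via the northwest- and southeast-hook-closure properties, and observe that their labels are forced to appear between any occurrence of $F(\alpha)$ from $\alpha$ and any occurrence of $F(\beta)$ from $\beta$. Your write-up simply spells out in more detail the positional bookkeeping that the paper dismisses with ``by definition.''
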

\begin{proof}
Assume that $\alpha = (i_1,j_1)\neq\beta = (i_2,j_2)$. Since $F$ increases along rows and columns,
$\alpha$ is either strictly northeast or southwest of $\beta$.
In either case, using the fact that $\lambda$ is
northwest- and southeast-hook closed, the boxes $\gamma = (i_1,j_2)$ and $\delta = (i_2,j_1)$ are both in $\lambda$.
\[
\begin{ytableau}
\delta & \beta \\
\alpha & \gamma
\end{ytableau}\hspace{4mm}\begin{ytableau}
\gamma & \alpha \\
\beta & \delta
\end{ytableau}
\]
By definition, the labels in boxes $\gamma$ and $\delta$ must lie between any $F(\alpha)$ and any $F(\beta)$ in any repetitive reading word, as desired.
\end{proof}

Using this lemma, we can prove Proposition \ref{prop:f_compat_invar}.

\begin{proof}[Proof of Proposition \ref{prop:f_compat_invar}]
It suffices to assume that $v$ differs from $u$ by one $K$-Knuth move.

If $v$ is obtained from $u$ by replacing an occurrence of $p$ by $pp$, it is clear that $v$ is a repetitive reading word.

If $v$ is obtained from $u$ by replacing $pp$ by $p$, then by Lemma \ref{lem:f_compat_invar_tech}, $v$ is a repetitive reading word since the occurrence of a double $p$ in $u$ resulted from listing the same box twice.

If $v$ is obtained from $u$ by replacing $pqp$ with $qpq$, then by Lemma \ref{lem:f_compat_invar_tech}, both $q$'s label the same box of $\lambda$. Thus neither the box labeled by this $q$ nor the box labeled by this $p$ is weakly southwest of the other, so $v$ is a repetitive reading word.

If $v$ is obtained from $u$ by replacing $xzy$ by $zxy$ for $x<y<z$, let $\beta_x$, $\beta_y$, and $\beta_z$ be the boxes of $\lambda$ corresponding to these occurrences $x$, $y$, and $z$, respectively. It suffices to show that $\beta_z$ is strictly southeast of $\beta_x$. It is impossible for $\beta_z$ to be weakly southwest of $\beta_x$ since $u$ is a repetitive reading word, and it is impossible for $\beta_z$ to appear weakly northwest of $\beta_x$ because $F$ is increasing and $x<z$. Assume then, for the sake of contradiction, that
$\beta_z = (i_3,j_3)$ is weakly northeast of $\beta_x = (i_1,j_1)$.
Since the labels of the two boxes are consecutive in repetitive reading word $u$ and $\lambda$ is southeast-hook-closed,
one must have $i_3=i_1$ and $j_3=j_1+1$.
\[
\begin{ytableau}
\beta_x &\beta_z
\end{ytableau}
\]
The box~$\beta_y=(i_2,j_2)$ cannot be weakly northwest
of~$\beta_x$ or weakly southeast of~$\beta_z$
because $x<y<z$, and $\beta_y$ cannot be weakly southwest of $\beta_x$ by the definition of repetitive reading word. Since the labels of $\beta_z$ and $\beta_y$ are consecutive in $u$ and $\lambda$ is northwest-hook-closed, one must have $i_2=i_3-1$ and $j_2=j_3$.
\[
\begin{ytableau}
\none & \beta_y \\
\beta_x & \beta_z
\end{ytableau}
\]
Let $\alpha=(i_2,j_1)$,
which exists because $\lambda$ is northwest-hook closed.
\[
\begin{ytableau}
\alpha & \beta_y \\
\beta_x & \beta_z
\end{ytableau}
\]
Then $F(\alpha)$~lies between $\beta_x$ and~$\beta_y$ in $u$ by the definition of repetitive reading word, giving a contradiction.

Analogous arguments apply to the other three $K$-Knuth moves.
\end{proof}

\subsection{Proof of Theorem~\ref{theorem1}}

In this section, we will use Proposition \ref{prop:f_compat_invar} to prove Theorem~\ref{theorem1}. In particular, we will prove the following lemma.

\begin{lemma}\label{lem:unique_f_compat_tableau}
If $F$ is an increasing filling of a northeast- and northwest-hook-closed shape $\lambda$, then there is exactly one straight tableau $T$ such that $\row(T)$ is a repetitive reading word of $F$.
\end{lemma}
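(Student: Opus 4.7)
The plan is to prove existence and uniqueness separately, with uniqueness being the harder step.

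For existence, I take $T_0$ to be the straight tableau obtained from $F$ by left-justifying each row (and shifting rows upward so the first nonempty row of $\lambda$ becomes row~$1$). Lemma~\ref{lemma4} implies that $\lambda$ has rows which are consecutive intervals $[a_i,b_i]$ with $a_i$ weakly increasing in $i$ and $b_i$ weakly decreasing, so row lengths are weakly decreasing and $T_0$ has partition shape. Writing $T_0(i,j)=F(i,a_i+j-1)$, rows of $T_0$ remain strictly increasing (they are rows of $F$), and for columns one checks
\[
T_0(i,j)=F(i,a_i+j-1)\le F(i,a_{i+1}+j-1)<F(i+1,a_{i+1}+j-1)=T_0(i+1,j),
\]
where the intermediate box $(i,a_{i+1}+j-1)$ lies in $\lambda$ because $a_i\le a_{i+1}+j-1\le b_{i+1}\le b_i$, and the first inequality is strict whenever $a_i<a_{i+1}$. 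Hence $T_0$ is an increasing straight tableau; since $\row(T_0)=\row(F)$ as sequences of letters, and $\row(F)$ is a reading word of $F$, Proposition~\ref{prop:reading_compat} shows $\row(T_0)$ is a repetitive reading word of $F$.

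For uniqueness, I proceed by induction on $|\lambda|$. Suppose $T$ is any straight tableau with $\row(T)$ a repetitive reading word of $F$; I show $T=T_0$. The inductive step proceeds by identifying the bottom row of $T$ with the bottom row of $F$ and peeling off both. The first letter of $\row(T)$ equals $T(k,1)$, the bottom-left entry of $T$, which must be $F(\beta)$ for some southwest-most box $\beta$ of $\lambda$; the SW-most boxes of $\lambda$ are exactly the $(i,a_i)$ where $i=m$ or $a_{i+1}>a_i$, their $F$-values form a strictly increasing chain, and the strict increase of column~$1$ of $T$ together with the rigidity supplied by the technical Lemma~\ref{lem:unique_f_compat_tableau_tech} forces $\beta$ to be the leftmost box of the bottom row of $\lambda$. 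Iterating through the first $\mu_k$ letters of $\row(T)$ then shows that the bottom row of $T$ reads off the bottom row of $F$ from left to right. Deleting the bottom row of $\lambda$ yields a shape $\lambda'$ that is still northeast- and northwest-hook-closed (the row-interval and column-top-justified structure is preserved), and $T$ with its bottom row removed is a straight tableau whose row word is a repetitive reading word of $F|_{\lambda'}$, so the inductive hypothesis gives the conclusion.

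The main obstacle is the identification of the bottom row of $T$ with the bottom row of $F$. The ``do not remove'' option in the definition of a repetitive reading word means that, \emph{a priori}, $\row(T)$ could interleave letters from several rows of $\lambda$ or revisit entries, and one has to rule out such degenerate parses for a straight $T$. This is precisely the role I envision for Lemma~\ref{lem:unique_f_compat_tableau_tech}: to constrain how a row word of a straight tableau can parse as a repetitive reading word on an NE/NW hook-closed shape, forcing the canonical parse in which the letters of the bottom row of $T$ come from the bottom row of $\lambda$ read left-to-right.
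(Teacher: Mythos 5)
Your existence argument is essentially the paper's: the paper likewise takes $T$ to be the left-justification $F_L$ of $F$ and verifies that it is an increasing straight tableau with $\row(F_L)=\row(F)$, which is a repetitive reading word by Proposition~\ref{prop:reading_compat}. That half is complete and correct.

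The uniqueness half contains a genuine gap, which you yourself flag but do not close. Your induction peels off the bottom row, and this requires two things: (a) the first $\mu_k$ letters of $\row(T)$ are read precisely from the bottom row of $\lambda$, and (b) after those letters the remaining word is a repetitive reading word of $F$ restricted to $\lambda'$. For (a) you appeal vaguely to ``the strict increase of column~$1$'' and the ``rigidity'' of Lemma~\ref{lem:unique_f_compat_tableau_tech}; the actual mechanism is to apply that lemma to \emph{both} shapes with the same word $w$: $w_j$ begins a strictly decreasing subword of length $\geq i$ iff $w_j^\mu\in R_{\geq i}(\mu)$ iff $w_j^\lambda\in R_{\geq i}(\lambda)$, which in particular forces $\mu$ and $\lambda$ to have the same number of rows and identifies which letters come from the bottom row. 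For (b) the technical lemma alone is not enough: you must argue that since the first $\mu_k$ letters are distinct and the bottom row of $F$ has distinct entries, these reads hit $\mu_k$ distinct bottom-row boxes; since bottom-row boxes are never read after step $\mu_k$, every one of them must be removed during these steps, forcing $\mu_k=\lambda_m$ and each box to be read exactly once and removed immediately -- only then does the tail of $w$ parse as a repetitive reading word of $F|_{\lambda'}$. None of this bookkeeping appears in your write-up. The paper sidesteps it entirely: applying the technical lemma to both shapes at once shows, for every $i$ simultaneously, that the set of letters in row $i$ of $T$ equals the set of entries in row $i$ of $F$ (using only that every box is read at least once), and increasingness along rows then forces $T=F_L$, with no induction and no analysis of the ``do not remove'' option. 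I would either adopt that direct argument or write out steps (a) and (b) in full.
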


We first use Lemma \ref{lem:unique_f_compat_tableau} to prove Theorem~\ref{theorem1}.

\begin{proof}[Proof of Theorem~\ref{theorem1}]
Suppose $T$ is a right-alignable tableau, and recall that $T_R$ denotes the right alignment of $T$. Since $\row(T) = \row(T_R)$, $\row(T)$~is a repetitive reading word of $T_R$ by Proposition \ref{prop:reading_compat}. If $T\equiv T'$,
then $\row(T')$~is also a repetitive reading word of $T_R$ by Proposition~\ref{prop:f_compat_invar}, so $T'=T$ by Lemma~\ref{lem:unique_f_compat_tableau}. Hence, $T$~is a URT.
\end{proof}

It remains to prove Lemma \ref{lem:unique_f_compat_tableau}. Before proving it, we will need a short technical lemma. In the rest of this section, we will adopt the following notation: for (not necessarily skew or partition) shape $\lambda$, let $R_{i_0}(\lambda)$ denote the $i_0$th row of $\lambda$, i.e., $R_{i_0}(\lambda) = \left\{(i,j)\in\lambda : i = i_0\right\}$. Also, we write
$$ R_{\geq i_0}(\lambda) = \bigcup\limits_{i\geq i_0}R_i(\lambda). $$

\begin{lemma}\label{lem:unique_f_compat_tableau_tech}
Let $F$ be an increasing filling of a northeast- and northwest-hook-closed shape~$\lambda$ with at least one box in the first row. Suppose $w=w_1w_2\cdots w_n$ is a repetitive reading word of $F$, and let $w_j^\lambda$ denote the box in $\lambda$ that contributes the letter $w_j$ to $w$. For any $j$ and $k$, the following are equivalent.
\begin{enumerate}
	\item\label{lem:unique_f_compat_tableau_tech1} $w_j$~is the first letter of a strictly decreasing subword of $w$ of length~$k$.
	\item\label{lem:unique_f_compat_tableau_tech2} $w_j^\lambda\in R_{\geq k}(\lambda)$.
\end{enumerate}
\end{lemma}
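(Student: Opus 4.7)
My plan is to prove the two directions of the equivalence separately, with Lemma~\ref{lemma4} powering the forward direction and the ``southwest-most'' condition on repetitive reading words powering the reverse direction. For $(\ref{lem:unique_f_compat_tableau_tech2}) \Rightarrow (\ref{lem:unique_f_compat_tableau_tech1})$, I would use Lemma~\ref{lemma4} to generate a column of boxes witnessing the decreasing subword. If $w_j^\lambda = (i, j_0)$ with $i \geq k$, then the boxes $\beta_s := (i-s+1, j_0)$ for $s = 1, \ldots, k$ all lie in $\lambda$, and since $F$ is increasing along columns their labels satisfy $F(\beta_1) > \cdots > F(\beta_k)$. It remains to locate these labels at positions $j = j_1 < j_2 < \cdots < j_k$ in $w$. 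Each $\beta_s$ must be read at least once in $w$, because a repetitive reading word reduces $F$ to the empty shape, and since $\beta_1, \ldots, \beta_{s-1}$ are strictly southwest of $\beta_s$ (same column, strictly larger row), $\beta_s$ cannot be southwest-most until they are removed. Hence the first-reading positions satisfy $p_1 < p_2 < \cdots < p_k$. Because $\beta_1$ is present at step $j$ (as $w_j$ is its label), $\beta_2$ cannot appear by step $j$, so $p_2 > j$; taking $j_1 = j$ and $j_s = p_s$ for $s \geq 2$ gives the required subword.

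For $(\ref{lem:unique_f_compat_tableau_tech1}) \Rightarrow (\ref{lem:unique_f_compat_tableau_tech2})$, writing $\alpha_s = w_{j_s}^\lambda = (i_s, c_s)$ for the contributing boxes of a strictly decreasing subword $w_{j_1} > \cdots > w_{j_k}$, I would show $i_1 > i_2 > \cdots > i_k$, from which $i_1 \geq k$ is immediate. Suppose for contradiction that $i_{s+1} \geq i_s$ for some $s$. Because $F(\alpha_s) > F(\alpha_{s+1})$ and $F$ is increasing, $\alpha_s$ is not weakly northwest of $\alpha_{s+1}$, which combined with $i_{s+1} \geq i_s$ forces $c_{s+1} < c_s$. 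Then $\alpha_{s+1}$ is weakly southwest of and distinct from $\alpha_s$, so for $\alpha_s$ to be southwest-most at step $j_s$, the box $\alpha_{s+1}$ must already have been removed; this contradicts its appearance at step $j_{s+1} > j_s$.

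The main subtlety, and the obstacle I would be careful about, is that a box in a repetitive reading word may be read multiple times before being removed: we are told $w_j$ corresponds to $w_j^\lambda$, but this need not be its first or last reading. I account for this in the forward direction by working with \emph{first-reading} positions $p_s$ and invoking the ``present at step $j$'' fact only for $\beta_1$, and in the reverse direction by applying the southwest-most property only at the specific reading steps $j_s$ of each $\alpha_s$. Once these dependencies are tracked cleanly, the rest is a routine combination of the hook-closure geometry with the increasing property of $F$.
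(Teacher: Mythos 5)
Your proof is correct and follows essentially the same route as the paper: Lemma~\ref{lemma4} produces the column of boxes for $(\ref{lem:unique_f_compat_tableau_tech2})\Rightarrow(\ref{lem:unique_f_compat_tableau_tech1})$, and the dichotomy ``not weakly southeast (since $F$ is increasing) and not weakly southwest (by the southwest-most condition)'' forces the contributing boxes of a decreasing subword strictly northward for the converse. Your extra care about boxes being read multiple times simply makes explicit what the paper's terser phrasing leaves implicit; no changes needed.
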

\begin{proof}
Assume (\ref{lem:unique_f_compat_tableau_tech1}). Let $j = a_1 < a_2 <\cdots < a_k$ be such that $w_{a_1} > w_{a_2} > \cdots > w_{a_k}$. The box~$w_{a_{r+1}}^\lambda$ cannot be weakly southeast of the box~$w_{a_r}^\lambda$ because $w_{a_{r+1}} < w_{a_r}$, and $w_{a_{r+1}}^\lambda$ cannot be weakly southwest of $w_{a_r}^\lambda$ by definition of repetitive reading word. Hence $w_{a_{r+1}}^\lambda$ is strictly north of $w_{a_r}^\lambda$.  Thus (\ref{lem:unique_f_compat_tableau_tech2}) follows.

Conversely, assume  (\ref{lem:unique_f_compat_tableau_tech2}). Write $w_j^\lambda = (p,q)$ with $p\geq k$. By Lemma \ref{lemma4}, the box $(p-r+1,q)$ is in $\lambda$ for $1\leq r\leq k$, so since $w$ is a repetitive reading word, there exist $j = a_1 < a_2 < \cdots < a_k$ such that $w_{a_r}^\lambda = (p-r+1,q)$. Since $F$ increases down columns, $w_{a_1} > w_{a_2} > \cdots > w_{a_k}$, so (\ref{lem:unique_f_compat_tableau_tech1}) holds.
\end{proof}

Using Lemma \ref{lem:unique_f_compat_tableau_tech}, we can then complete the proof of Lemma \ref{lem:unique_f_compat_tableau}. Recall that Lemma \ref{lem:unique_f_compat_tableau} claims that there is exactly one straight tableau $T$ such that $\row(T)$ is a repetitive reading word for $F$.

\begin{proof}[Proof of Lemma \ref{lem:unique_f_compat_tableau}]
By moving the shape $\lambda$ if necessary, we may assume that $\lambda$ has at least one box in the first row.

We first show that there is at most one such straight tableau. Suppose $T$ is a straight shape tableau of shape $\mu$ and $w=\row(T)=w_1w_2\cdots w_n$ is a repetitive reading word for $F$. We will show that $T$ is uniquely determined by this condition. Recall from Proposition \ref{prop:reading_compat} that $w$ is a repetitive reading word for $T$, and again let $w_j^\mu$ (resp. $w_j^\lambda$) denote the box of $\mu$ (resp. $\lambda$) that contributes $w_j$ to $w$.  

Note that the straight shape $\mu$ is northeast- and northwest-hook-closed and has at least one box in the first row, so by applying Lemma \ref{lem:unique_f_compat_tableau_tech} twice, we see that $w_j^\lambda\in R_{\geq i}(\lambda)$ if and only if $w_j$~is the first letter of a strictly decreasing subword of $w$ of length~$i$, which happens if and only if $w_j^\mu\in R_{\geq i}(\mu)$. Since $R_i(\lambda) = R_{\geq i}(\lambda) - R_{\geq(i+1)}(\lambda)$ (and similarly for $\mu$), $w_j^\lambda\in R_i(\lambda)$ if and only if $w_j^\mu\in R_i(\mu)$.

Since $F(w^\lambda_1)F(w^\lambda_2)\cdots F(w^\lambda_n) = w = T(w^\mu_1)T(w^\mu_2)\cdots T(w^\mu_n)$, this means that the letters in the $i$th row of $T$ are exactly the letters in the $i$th row of $F$. Since $T$ and $F$ are both increasing along rows, $T$ is uniquely determined.

We now describe the straight tableau $T$ with $\row(T)$ a repetitive reading word for $F$. Let $F_L$ denote the left alignment of increasing filling $F$ defined in the natural way (left-justify each row). Since $F$ is northeast- and northwest-hook-closed, the sizes of the rows of $F$ must be weakly decreasing from top to bottom, and hence the same is true for $F_L$.

To see that $F_L$ is increasing, suppose we have boxes $\alpha$ and $\beta$ of $F$ such that $\alpha$ is directly above $\beta$ in $F_L$. Then $\alpha$ must be weakly northwest of $\beta$ in $F$. It follows that the northeast hook between $\alpha$ and $\beta$ exists in $F$, which forces $F(\alpha)<F(\beta)$. Since it is clear that $F_L$ is increasing along rows, we conclude that $F_L$ is increasing.

Therefore, taking $T=F_L$ gives a straight tableau with $\row(T)=\row(F)$, a repetitive reading word of $F$.
\end{proof}

\subsection{Shapes of Non-URTs}
\ytableausetup{mathmode, boxsize=2em, centertableaux}
Recall that Corollary \ref{cor:rect_URT} says that every tableau of rectangular shape is a URT. One might wonder if there are other shapes that are always URTs regardless of filling. It is easily checked that every tableau of shape $\tiny\yng(2,1)$ is a URT. We will show in this section that these are all the possibilities.

\begin{proposition} \label{prop:sharpRect}
If $\lambda$ is a straight shape
that is not a rectangle or $\tiny\yng(2,1)$
then there is an increasing tableau
of shape $\lambda$ that is not a URT.
\end{proposition}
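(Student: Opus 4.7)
The plan is to construct, for each qualifying $\lambda$, an explicit increasing tableau of shape $\lambda$ that is $K$-Knuth equivalent to a distinct tableau (usually of a different shape). First I would reduce via the transpose invariance of $K$-Knuth equivalence from Section~\ref{section:invariants}: $T$ is a URT iff $T^t$ is, and transposition preserves rectangles and fixes $\tiny\yng(2,1)$. The only non-rectangular shape with both $\lambda_1 \le 2$ and $\ell \le 2$ is $\tiny\yng(2,1)$ itself, so for every $\lambda$ meeting the hypothesis either $\lambda$ or $\lambda^t$ has $\lambda_1 \ge 3$ and $\ell \ge 2$; without loss of generality assume the former.

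The building block is Example~\ref{example:3124}, which gives $\young(124,3)\equiv\young(124,34)$, or in words $3124\equiv 34124$. Because $K$-Knuth moves act only on short substrings, this extends to $u\cdot 3124\cdot v\equiv u\cdot 34124\cdot v$ for any words $u,v$. I would then handle the hook case $\lambda=(\lambda_1,1^{\ell-1})$ first: let $T$ have first row $1,2,4,5,\dots,\lambda_1{+}1$ and first column $1,3,4,\dots,\ell{+}1$, and let $T'$ be obtained from $T$ by adding a box at $(2,2)$ labeled $4$, so that $T'$ has shape $(\lambda_1,2,1^{\ell-2})$. Then
\[
\row(T)=u\cdot 3124\cdot v, \qquad \row(T')=u\cdot 34124\cdot v,
\]
with $u=(\ell{+}1)(\ell)\cdots 4$ and $v=5\cdot 6\cdots(\lambda_1{+}1)$, so $T\equiv T'$; since $T$ and $T'$ have different shapes, $T$ is a non-URT of shape $\lambda$.

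For the remaining case $\lambda_2\ge 2$, the trick of adding a box at $(2,2)$ is unavailable because the box is already present. Here I would use the fact that Hecke insertion is compatible with $K$-Knuth equivalence (Proposition~\ref{restrictedalpha}-style localization together with the $K$-jdt/$K$-Knuth correspondence): applying the same word $w$ via Hecke insertion to both members of a $K$-Knuth equivalent pair yields another $K$-Knuth equivalent pair. Starting from the base pair $(\young(124,3),\young(124,34))$—or, when helpful, from the transposed gadget $\young(13,2,4)\equiv\young(13,24,4)$—and inserting a carefully chosen $w$, one obtains a pair of distinct-shape tableaux with one member of shape~$\lambda$, exhibiting a non-URT of that shape.

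The main technical obstacle is verifying that this general-case construction truly covers every $\lambda$ with $\lambda_2\ge 2$. Shapes where both $\lambda_2\ge 2$ and $\lambda_3\ge 2$, such as $(3,3,1)$ and its transpose $(3,2,2)$, are the hardest: forward Hecke insertion from $(\young(124,3),\young(124,34))$ tends to stall before reaching them, because insertions that would extend row $2$ of $\young(124,34)$ are blocked by the maximal entry $4$ in row $2$. The intended resolution is to combine forward and reverse $K$-jdt slides—equivalently, to exhibit, for each such $\lambda$, a single skew tableau with two distinct rectifications whose shapes are $\lambda$ and some other shape—drawing directly on Theorem~2.24 of the excerpt to convert the resulting $K$-jdt equivalence into the required $K$-Knuth equivalence of tableaux.
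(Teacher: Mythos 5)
Your hook case is correct, and it is essentially the $k=1$ instance of the construction the paper uses: your pair $(T,T')$ is a two-row gadget of shapes $(\lambda_1,1)$ and $(\lambda_1,2)$ padded with a long first column, and since $K$-Knuth equivalence is a congruence for concatenation, $u\cdot 3124\cdot v\equiv u\cdot 34124\cdot v$ is legitimate. The opening reduction to $\lambda_1\geq 3$ and $\ell\geq 2$ via transposition is also fine, but it is far weaker than what is needed and leaves almost all of the shapes (every $\lambda$ with $\lambda_2\geq 2$) to the unresolved part of your argument.

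That unresolved part is a genuine gap, and you flag it yourself: ``inserting a carefully chosen $w$'' and ``exhibit, for each such $\lambda$, a skew tableau with two distinct rectifications'' are descriptions of what must be done, not arguments. As you observe, forward Hecke insertion from the base pair stalls before reaching shapes such as $(3,3,1)$ and $(3,2,2)$, and no substitute construction is supplied for them; the claim that every remaining $\lambda$ can be realized as one of two rectifications of some skew tableau is exactly the content of the proposition and cannot be assumed. The paper closes this gap with two ingredients your outline lacks: (i) a parameterized family of equivalent two-row gadgets $T_{j,k}\equiv T_{j,k}'$ of shapes $(j,k)$ and $(j,k+1)$ for every $j\geq k+2$, $k\geq 1$, produced by Hecke inserting the letter $k+1$ once versus twice into a fixed two-row tableau (your example is the case $j=3$, $k=1$); and (ii) a splicing lemma saying that if two tableaux differ only in rows $i$ and $i+1$ and their restrictions to those two rows are equivalent, then the full tableaux are equivalent, because the connecting $K$-Knuth moves involve only letters from those rows. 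Together these let the gadget be planted in \emph{any} consecutive pair of rows with $\lambda_i\geq\lambda_{i+1}+2$ and $\lambda_{i+1}>0$, in $\lambda$ or in $\lambda^t$; the residual shapes $(k^i,k-1,k-2,\ldots,k-m)$ with $k>m\geq 1$ are then dispatched by a short case analysis using the $(k,k-1)$-shaped member $T_{k,k-2}'$ and transposition, leaving only $(2,1)$. Without analogues of (i) and (ii), your proposal does not yield a proof of the general case.
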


We will divide the proof into a few steps. Our first step is to construct pairs of $K$-Knuth equivalent two-row tableaux, which will serve as the building block in subsequent steps.

For any $k > 0$ and $j\geq k+2$, define the tableau
\[
T_{j,k} = 
\begin{ytableau}
\sss 1 & \cdots & \sss k-1 & *(lightgray) \sss k & *(lightgray) \sss k+1 & *(lightgray) \sss k+3 
& \sss k+4 & \cdots & \sss j+1 \\
\sss 2 & \cdots & \sss k & *(lightgray) \sss k+2
\end{ytableau},
\]
of shape $(j,k)$, as well as the tableau
\[
T_{j,k}' = 
\begin{ytableau}
\sss 1 & \cdots & \sss k-1 & *(lightgray) \sss k & *(lightgray) \sss k+1 & *(lightgray) \sss k+3 
& \sss k+4 & \cdots & \sss j+1 \\
\sss 2 & \cdots & \sss k & *(lightgray) \sss k+2 & *(lightgray) \sss k+3
\end{ytableau},
\] of shape $(j,k+1)$
(where there are no boxes to the left of the gray boxes in the case $k=1$). We will prove the following lemma.
\begin{lemma}\label{lem:sharpRect1}
$T_{j,k}\equiv T_{j,k}'$.
\end{lemma}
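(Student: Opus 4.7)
My plan is to prove $T_{j,k} \equiv T'_{j,k}$ by exhibiting an explicit sequence of $K$-Knuth moves connecting their row words. Writing them out,
\begin{align*}
\row(T_{j,k}) &= 2 \cdot 3 \cdots k \cdot (k+2) \cdot 1 \cdot 2 \cdots (k+1) \cdot (k+3)(k+4) \cdots (j+1),\\
\row(T'_{j,k}) &= 2 \cdot 3 \cdots k \cdot (k+2)(k+3) \cdot 1 \cdot 2 \cdots (k+1) \cdot (k+3)(k+4) \cdots (j+1),
\end{align*}
they differ only by an extra $(k+3)$ inserted immediately after the leading $(k+2)$. Since $K$-Knuth equivalence is a congruence under concatenation, I may strip the common prefix $2 \cdot 3 \cdots k$ and suffix $(k+4)(k+5) \cdots (j+1)$, reducing the lemma to the core claim
\[
(k+2) \cdot 1 \cdot 2 \cdots (k+1) \cdot (k+3) \;\equiv\; (k+2)(k+3) \cdot 1 \cdot 2 \cdots (k+1) \cdot (k+3).
\]

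The plan for the core claim is to duplicate the trailing $(k+3)$ via $x \equiv xx$ and then transport one copy leftward past $(k+1), k, \ldots, 1$ to its target position using a choreographed sequence of commutation moves $xzy \equiv zxy$ and $yxz \equiv yzx$ interleaved with braid moves $xyx \equiv yxy$. For the base case $k=1$ this reduces to $3124 \equiv 34124$, which I would verify by the explicit chain
\[
3124 \to 31244 \to 13244 \to 13424 \to 13242 \to 31242 \to 31424 \to 34124,
\]
consisting of a duplication of the trailing $4$ followed by the successive moves $312 \to 132$, $324 \to 342$, the braid $424 \to 242$, $132 \to 312$, the braid $242 \to 424$, and finally $142 \to 412$. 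For general $k$ I would proceed by induction on $k$: after the initial duplication, a braid move would reposition one copy of $(k+3)$ so that the remaining configuration is, up to alphabet shift, an instance of the claim for $k-1$.

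The main obstacle is the inductive bookkeeping. Each $K$-Knuth relation is rigid -- it requires three specific letters in a specific cyclic pattern satisfying specific inequalities -- so $(k+3)$ cannot simply commute past a single smaller letter in isolation. Each ``sliding'' step requires either a neighboring ``watcher'' letter of intermediate value or an auxiliary duplicate introduced via $x \equiv xx$, and these auxiliary letters must be carefully threaded through the sequence so that they cancel out cleanly and yield the intended final word. An alternative approach, suggested by Example~2.11 (which handles the case $k=1$, $j=3$), is to exhibit a common $K$-jeu de taquin skew ancestor $S$ such that rectifying $S$ under two different slide orders produces $T_{j,k}$ and $T'_{j,k}$ respectively; the obstacle in that route is constructing and verifying the right skew shape for all $(j,k)$.
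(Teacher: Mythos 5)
Your row words are computed correctly, the reduction to the core claim via the congruence property of $\equiv$ is legitimate, and your explicit chain for $k=1$ checks out move by move ($3124 \to 31244 \to 13244 \to 13424 \to 13242 \to 31242 \to 31424 \to 34124$ is a valid sequence of $K$-Knuth moves). However, the argument for general $k$ is a genuine gap, not just bookkeeping. First, the proposed inductive step fails at its first move: for $k\geq 2$, after duplicating the trailing $(k+3)$ in $(k+2)\cdot 1\cdots(k+1)\cdot(k+3)$, no braid move $xyx\equiv yxy$ is available anywhere in the word (the only repeated letter is the adjacent pair $(k+3)(k+3)$), and $(k+3)$ cannot commute past $(k+1)$ without a watcher of strictly intermediate value adjacent to it --- the only candidate, $(k+2)$, sits at the far left. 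Second, the reduction ``up to alphabet shift, an instance of the claim for $k-1$'' does not go through: restricting $W_k=(k+2)\cdot 1\cdots(k+1)\cdot(k+3)$ to the subalphabet $[2,k+3]$ does yield the shifted $W_{k-1}$, but the deleted letter $1$ lies in the \emph{interior} of the word, and $K$-Knuth equivalence is a congruence only for concatenation at the ends; there is no principle allowing you to re-insert a letter in the middle of both sides of an equivalence. So the heart of the lemma --- the case $k\geq 2$ --- remains unproved.

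The paper's proof sidesteps all of this with the Hecke insertion machinery you already have available. Let $T$ be the tableau of shape $(j,k-1)$ with first row $1,\ldots,k,k+2,k+3,\ldots,j+1$ and second row $2,\ldots,k$. A direct check of the insertion rules shows $T\leftarrow(k+1)=T_{j,k}$ (the letter $k+1$ bumps $k+2$ from the first row, and $k+2$ appends to the second row) and $T_{j,k}\leftarrow(k+1)=T'_{j,k}$ (now $k+1$ cannot replace $k+3$, so $k+3$ is bumped to the second row and appends there). Hence $P(\row(T)\,(k+1))=T_{j,k}$ and $P(\row(T)\,(k+1)(k+1))=T'_{j,k}$; since $\row(T)(k+1)\equiv\row(T)(k+1)(k+1)$ by the relation $x\equiv xx$, and insertion equivalence implies $K$-Knuth equivalence, we get $T_{j,k}\equiv T'_{j,k}$ in one line. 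I recommend replacing your explicit-moves strategy with this argument; if you insist on an explicit chain of moves, you would essentially be re-deriving Lemma~\ref{lem:equiv within class} by hand for this family of words.
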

\begin{proof}
Note that $T_{j,k}$ can be obtained by inserting $(k+1)$ once into the tableau
\[
T = \begin{ytableau}
\sss 1 & \cdots & \sss k-1 & *(lightgray) \sss k &
*(lightgray)\sss k+2 & *(lightgray) \sss k+3 
& \sss k+4 & \cdots & \sss j+1 \\
\sss 2 & \cdots & \sss k
\end{ytableau}\qquad
\]
(where there is no box in the second row in the case $k=1$), while $T_{j,k}'$ can be obtained by inserting $(k+1)$ twice into $T$.
\end{proof}

Our next step is to prove a lemma that allows us to reduce to the case of two-line tableaux. Given a tableau~$T$ of shape $\lambda=(\lambda_1,\dots,\lambda_\ell)$,
let~$T^{(r,s)}$ denote the restriction of~$T$
to the rows $r,r+1,\dots,s$ of~$\lambda$. Similarly, we let $\lambda^{(r,s)}$ denote the restriction of~$\lambda$
to these rows.

\begin{lemma}\label{lem:sharpRect2}
If $T$ and~$T'$ are two tableaux that differ
only in the $i$th~and $(i+1)$th rows,
and if $T^{(i,i+1)}\equiv (T')^{(i,i+1)}$,
then $T\equiv T'$.
\end{lemma}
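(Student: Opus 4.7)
The plan is to use the fact that $K$-Knuth equivalence, being defined via local rewriting rules on words, is preserved under concatenation on the left and right. Since the row word of a straight tableau factors cleanly along the rows (reading bottom to top), the moves transforming the middle two rows into the corresponding piece of $T'$ can be carried out inside the full row word without disturbing anything outside.

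More concretely, I would first write $\row(T)=w_B\cdot \row(T^{(i,i+1)})\cdot w_A$, where $w_A$ is the concatenation (from row $i-1$ upward to row $1$) of the rows of $T$ lying above row $i$ and $w_B$ is the concatenation (from the last row upward to row $i+2$) of the rows lying below row $i+1$. By the hypothesis that $T$ and $T'$ agree outside of rows $i$ and $i+1$, the very same decomposition gives $\row(T')=w_B\cdot \row((T')^{(i,i+1)})\cdot w_A$ with identical outer factors $w_A$ and $w_B$. Note that $T^{(i,i+1)}$ really is a straight two-row tableau of shape $(\lambda_i,\lambda_{i+1})$ since rows $i,i+1$ of a straight tableau are left-aligned and weakly decreasing in length, and similarly for $(T')^{(i,i+1)}$, so the notation $\row(\cdot)$ makes sense here.

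Next, by the assumption $T^{(i,i+1)}\equiv (T')^{(i,i+1)}$, there is a finite sequence of $K$-Knuth moves taking $\row(T^{(i,i+1)})$ to $\row((T')^{(i,i+1)})$. Each such move is a local rewrite on at most three consecutive letters (or a single-letter duplication/contraction). Applying the same sequence of moves to the middle factor of $w_B\cdot \row(T^{(i,i+1)})\cdot w_A$ yields $w_B\cdot \row((T')^{(i,i+1)})\cdot w_A=\row(T')$, so $\row(T)\equiv \row(T')$ and hence $T\equiv T'$.

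There is essentially no obstacle: the only point to verify carefully is that prepending $w_B$ and appending $w_A$ to a $K$-Knuth move inside the middle factor is again a legal $K$-Knuth move on the full word, which is immediate from the definitions since every $K$-Knuth relation acts on a contiguous subword. The rest of the argument is just bookkeeping to check that the row word of a straight tableau really does split as claimed across rows $i-1,\,i,\,i+1,\,i+2$.
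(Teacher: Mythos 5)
Your proof is correct and takes essentially the same approach as the paper: the paper's one-line argument simply observes that $\row(T)$ and $\row(T')$ may be connected by $K$-Knuth moves using only letters from the $i$th and $(i+1)$th rows, which is precisely your observation that the row word factors as $w_B\cdot\row(T^{(i,i+1)})\cdot w_A$ and that $K$-Knuth moves on the middle factor remain legal moves on the full word. Your write-up just makes the bookkeeping explicit.
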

\begin{proof}
Note that $\row(T)$ and~$\row(T')$
may be connected by $K$-Knuth moves
that only use letters from the $i$th~and
$(i+1)$th rows.
\end{proof}

We can now complete the proof.

\begin{proof}[Proof of Proposition \ref{prop:sharpRect}]
For any tableau~$T$, we will denote by $T[n]$
the tableau formed by increasing the entries of~$T$ by $n$:
formally, $T[n](i,j) = T(i,j) + n$.

We first consider the following special case: there is an index~$i$ such that
$\lambda_i \geq \lambda_{i+1} + 2$ and $\lambda_{i+1} > 0$.
It is easy to see that there exists a tableau~$T$ of shape $\lambda$
such that $T^{(i,i+1)} = T_{\lambda_i,\lambda_{i+1}}[n]$
for some~$n$. Let~$T'$ denote the tableau of shape 
$(\dots,\lambda_i,\lambda_{i+1}+1,\lambda_{i+2},\dots)$
that has the same labels as~$T$ in 
every row except the $i$th and $(i+1)$th and
such that $(T')^{(i,i+1)} = T'_{\lambda_i,\lambda_{i+1}}[n]$.
By Lemma \ref{lem:sharpRect1}, $T^{(i,i+1)}\equiv (T')^{(i,i+1)}$, so by Lemma \ref{lem:sharpRect2}, $T\equiv T'$. Clearly $T\neq T'$, as desired.

By an analogous construction
using the transposes $T_{j,k}^t$
and~$(T'_{j,k})^t$, 
we see that the proposition holds for any shape $\lambda$ such that the following is true for either $\mu=\lambda$ or $\mu=\lambda^t$: there exists $i$ such that $\mu_i \geq \mu_{i+1}+2$ and $\mu_{i+1}>0$.

One can easily check that the only cases not covered by the above argument are the shapes $\lambda = (k^i,k-1,k-2,\ldots,k-m)$ with $k > m\geq 1$ (where $k^i$ denotes $i$ rows of length $k$). If $k\geq 3$, then $\lambda^{(i,i+1)} = (k,k-1)$ so we may use the same argument as above with a tableau $T'$ of shape $\lambda$ for which $(T')^{(i,i+1)} = T'_{k,k-2}[n]$ for some $n$. If $i+m\geq 3$, then $\lambda^t = ((i+m)^{k-m},i+m-1,\ldots,i)$ so we reduce to the previous case. If $i+m\leq 2$ and $k=2$, $\lambda = \tiny\yng(2,1)$, which we do not consider.
\end{proof}

\section{Hook-Shaped Tableaux}
In this section, we examine a class of tableaux
known as the hook-shaped tableaux and
characterize which hook-shaped tableaux are URTs.

\begin{definition} \label{def:hook}
A straight shape $\lambda$ is \emph{hook-shaped}
if $\lambda = (m,1^n)$ for some $m\geq1$ and $n\geq0$.
An increasing tableau $T$ of shape $\lambda$
is \emph{hook-shaped} if $\lambda$ is hook-shaped.
\end{definition}

Of the tableaux below,
the tableau on the left is hook-shaped and the two tableaux on the right are not.
\[
\yng(4,1,1)\qquad
\yng(4,2,1)\qquad
\yng(4,4,1)
\]

\begin{definition} \label{def:arm,leg}
Let $T$ be a hook-shaped tableau.
We define $\arm(T)$ to be the ordered tuple of labels in the first row of~$T$, excluding the leftmost box. We define $\leg(T)$ to be the ordered tuple of lables in the first column of~$T$, excluding its northernmost box.
\end{definition}

\begin{example}
The following tableau has $\arm(T)=(2,3,4)$ and $\leg(T)=(5,6).$
\[
T=\young(1234,5,6)\qquad
\]
\end{example}

\begin{theorem} \label{claim:1}
Let $T$ be an initial, hook-shaped
tableau. Then $T$ is a URT if and only if both $\arm(T)$ and $\leg(T)$ have
consecutive entries.

\end{theorem}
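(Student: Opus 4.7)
The plan is to exploit the key reduction that by Proposition~\ref{prop:outer_hook}, two $K$-Knuth equivalent tableaux share the same outer hook, and for a hook-shaped tableau the outer hook is the entire tableau. Consequently, any hook-shaped tableau $K$-Knuth equivalent to $T$ must equal $T$, so $T$ fails to be a URT if and only if some \emph{non}-hook-shaped tableau is $K$-Knuth equivalent to $T$.

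The backward direction then follows quickly. Assume both $\arm(T)$ and $\leg(T)$ are consecutive. Since $T$ is initial on $[m+n]$ with corner $1$, the arm and leg together partition $\{2, \ldots, m+n\}$ into two disjoint intervals of consecutive integers that must abut. So either $\arm(T) = \{2, \ldots, m\}$ (making $T$ superstandard of shape $(m, 1^n)$) or $\leg(T) = \{2, \ldots, n+1\}$ (making $T^t$ superstandard of shape $(n+1, 1^{m-1})$). In either case $T$ is a URT by Proposition~\ref{superstandardURT} together with the transpose invariance of $K$-Knuth equivalence from Section~\ref{section:invariants}.

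For the forward direction we prove the contrapositive: if $\arm(T)$ or $\leg(T)$ is non-consecutive, then some $T' \neq T$ satisfies $T' \equiv T$, necessarily of non-hook shape by the reduction. By transposing if needed, assume $\arm(T) = (a_1, \ldots, a_{m-1})$ is non-consecutive, and let $i$ be the smallest index with $a_{i+1} > a_i + 1$; the integers $a_i + 1, \ldots, a_{i+1} - 1$ then all lie in $\leg(T)$. Define $T'$ to be the tableau of shape $(m, 2, 1^{n-1})$ obtained from $T$ by adjoining a new box at position $(2, 2)$ with entry $a_{i+1}$. This tableau is valid because $a_{i+1}$ exceeds both $a_1 = T(1, 2)$ (since $i \geq 1$) and the smallest leg entry $b_1 \leq a_i + 1$ at position $(2, 1)$.

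The main obstacle is then to prove the equivalence $T \equiv T'$. We model the argument on the rectification example in Section~2.6, which handles the base case $\young(124,3) \equiv \young(124,34)$ by exhibiting a skew tableau $S$ with two different $K$-rectifications: when the first $\bullet$-placement is adjacent to two cells labeled $a_1$, the parallel $\swap_{a_1, \bullet}$ duplicates the $\bullet$ and the slide ultimately absorbs a cell, producing $T$; when the $\bullet$ is instead adjacent to only one such cell, no duplication occurs, the cell count is preserved, and the slide produces $T'$. Generalizing to our setting, we construct $S$ with bottom row $(1, b_1, \ldots, b_j, a_{i+1}, a_{i+2}, \ldots, a_{m-1})$ where $b_j = a_i + 1$, surmounted by a northeast staircase of cells labeled $a_1$. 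The heart of the argument is the general verification of these two rectifications---covering arbitrary leg lengths, non-minimal gap positions, and boundary cases where $i$ is near $m - 1$ or the gap fills nearly all of the leg. Alternatively, one can bypass $K$-jdt entirely and give a direct (though typically lengthy) $K$-Knuth chain from $\row(T)$ to $\row(T')$.
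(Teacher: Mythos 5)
Your opening reduction is sound: by Proposition~\ref{prop:outer_hook} the outer hook is a class invariant, and for a hook-shaped $T$ the outer hook is all of $T$, so $T$ fails to be a URT exactly when some non-hook tableau is equivalent to it. But your proof of the ``consecutive $\Rightarrow$ URT'' direction contains an error, not just a gap: the arm and leg of an initial hook-shaped tableau need \emph{not} be disjoint, so they do not partition $\{2,\dots,m+n\}$ and $T$ need not be superstandard or a transpose of one. For example $T=\young(1234,3,4,5)$ is initial on $[5]$ with $\arm(T)=(2,3,4)$ and $\leg(T)=(3,4,5)$ both consecutive but overlapping; it is neither superstandard, transpose-superstandard, minimal, nor right-alignable, so none of the cited URT families covers it. Your argument proves nothing for such tableaux. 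The paper instead handles this direction by induction on the length of the first row: given $T'\equiv T$, restrict both to $[2,n+1]$ (Proposition~\ref{restrictedalpha}), apply a forward $K$-jdt slide at $(1,1)$ which shifts the first row left, subtract $1$ from all entries, and invoke the inductive hypothesis; this works uniformly whether or not arm and leg overlap.

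For the other direction, your choice of target $T'$ (adjoin a box at $(2,2)$ containing the arm entry following a gap) is correct and matches Theorem~\ref{claim:4}, and your verification that $T'$ is a valid increasing tableau is fine. However, the actual content of this direction is the equivalence $T\equiv T'$, and you do not prove it: you describe a skew tableau $S$ with ``a northeast staircase of cells labeled $a_1$'' and assert that two rectification orders yield $T$ and $T'$, explicitly deferring ``the heart of the argument'' to an unexecuted general verification. As written there is no reason to believe the proposed $S$ even has $T$ as a rectification for arbitrary leg lengths and gap positions. The paper avoids this by exhibiting an explicit chain of $K$-Knuth moves on row words for a small base tableau (e.g.\ $63214\equiv 363241$), extending to arbitrary gaps by induction with a standardization trick, and then propagating to general arms and legs by row- and column-inserting additional letters, which preserves the equivalence. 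You would need to supply an argument of comparable substance; at present both directions of your proposal are incomplete, and the first is based on a false premise.
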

\begin{proof}
Assume both $\arm(T)$ and $\leg(T)$ have consecutive entries. If suffices to consider initial, hook-shaped tableaux with first row $1,2,\ldots,k$ for some $k$ because an initial, hook-shaped tableau with consecutive arm and leg has first row and/or first column $1,2,\ldots,k$ and classes are preserved by taking transposes.

Let $T$ be such a tableau with $\arm(T)=(2,3,\ldots k)$ and $\leg(T) = (\ell, \ell+1,\ldots, \ell+t)$. If $\ell=2$, $T$ is minimal and hence a URT, so we assume $\ell\geq3$. We proceed by induction on the length of the first row.

If the first row has only one box, $T$ is rectangular, which implies it is a URT by Corollary~\ref{cor:rect_URT}.

Assume every suitable tableau with first row of length $n$ is a URT, and consider $T$ with first row $1,2,\ldots, n+1$. Suppose $T'$ is a tableau with $T'\equiv T$. Then $T$ and $T'$ have the same arm and leg by Proposition \ref{prop:outer_hook}, and $T\!\mid_{[2,n+1]}\equiv T'\!\mid_{[2,n+1]}$ by Proposition \ref{restrictedalpha}.

Perform a forward $K$-jdt slide on $T\!\mid_{[2,n+1]}$
and $T'\!\mid_{[2,n+1]}$ at position
$(1,1)$, that is, at the position vacated by $1$
upon restriction to the subalphabet $[2,n+1]$.
Because the first rows of $T\!\mid_{[2,n+1]}$
and $T'\!\mid_{[2,n+1]}$ are labeled consecutively,
$\ell\geq3$, and both tableaux are increasing with the same outer hook,
performing this $K$-jdt slide translates the first row of each tableau one box to the left.
Let $S$ and $S'$ denote this $K$-rectification of $T\!\mid_{[2,n+1]}$
and $T'\!\mid_{[2,n+1]}$, respectively. Note that $S\equiv S'$.

Let $S[-1]$ and $S'[-1]$ denote the tableaux obtained by subtracting 1 from each entry of $S$ and $S'$, respectively. Then $S\equiv S'$ implies $S[-1]\equiv S'[-1]$. And $S[-1]$ is a URT by inductive hypothesis, so $S[-1]=S'[-1]$. It follows that $T'=T$, and so $T$ is a URT.

Next, assume $T$ has at least one of $\arm(T)$ or $\leg(T)$ without consecutive entries, and we show $T$ is not a URT. Without loss of generality, we assume that every tableau $T$ in consideration has $\leg(T)$ non-consecutive since transposing preserves equivalence classes. We consider two cases. 

\begin{case} Assume that $\leg(T)$ contains 2.
\end{case}

Consider the tableaux
\ytableausetup{mathmode, boxsize=1.6em}
\begin{gather*}
\begin{ytableau} 
1 & k \\ 
2\\ 
\vdots\\
\scriptscriptstyle k-1\\
\scriptscriptstyle k+p
\end{ytableau}
\quad\equiv\quad
\begin{ytableau} 
1 & k \\ 
2& \scriptscriptstyle k+p\\ 
\vdots\\
\scriptscriptstyle k-1\\
\scriptscriptstyle k+p
\end{ytableau}
\end{gather*}
where $k \ge 3$ and $p \ge 1$. The equivalence can be seen easily and directly by performing $K$-Knuth moves on the row words of the two tableaux. For example, if $k=4$ and $p=2$, the tableau on the left has row word 63214. We see $$63214\equiv 63241 \equiv 63421 \equiv 636421 \equiv 363421 \equiv 363241$$ and the last word inserts into the tableau on the right. This pattern will work for any $k$ and $p$.

The simplified example above generalizes to all initial tableaux with the properties specified in Case 1 via row and column insertion as follows.

If we row-insert some sequence of letters greater than $k$ to the first row of each tableau, the equivalence of the two tableaux does not change; this is the same as adding a sequence of letters to the end of each of their respective row words. Hence, we have 
\ytableausetup{mathmode, boxsize=1.6em}
\begin{gather*}
\begin{ytableau} 
1 & k & \scriptscriptstyle a_{i+1} & \cdots & a_{s}\\ 
2\\ 
\vdots\\
\scriptscriptstyle k-1\\
\scriptscriptstyle k+p
\end{ytableau}
\quad\equiv\quad
\begin{ytableau} 
1 & k & \scriptscriptstyle a_{i+1} & \cdots & a_{s}\\ 
2 & \scriptscriptstyle k+p\\ 
\vdots\\
\scriptscriptstyle k-1\\
\scriptscriptstyle k+p
\end{ytableau}
\end{gather*}
Similarly, we can column-insert integers less than $k-1$ into the first column of each tableau and maintain equivalence. Thus we can obtain any sequence of integers between $1$ and $k$ in the first row. For example, suppose we want to obtain $a_{1}, ... , a_{i-1}$ between 1 and $k$. We can first column-insert $a_{i-1}-1$ into the first column. This has the effect of shifting everything to the right of $1$ in the first row one box to the right, and inserting $a_{i-1}$ into the position $(1, 2)$. We can repeat this process, inserting $a_{i-2}-1$ into the first column, and so on, until we have the following equivalence:
\ytableausetup{mathmode, boxsize=1.6em}
\begin{gather*}
\begin{ytableau} 
1 & a_1 & \cdots & \scriptscriptstyle \scriptscriptstyle a_{i-1}& k & a_{i} & \cdots & a_{s}\\ 
2\\ 
\vdots\\
\scriptscriptstyle k-1\\
\scriptscriptstyle k+p
\end{ytableau}
\quad\hspace{-.8cm}\equiv\quad
\begin{ytableau} 
1 & a_1 & \cdots & \scriptscriptstyle a_{i-1} & k & a_{i} & \cdots & a_{s}\\ 
2 & \scriptscriptstyle k+p\\ 
\vdots\\
\scriptscriptstyle k-1\\
\scriptscriptstyle k+p
\end{ytableau}.
\end{gather*}

Finally, we can column-insert any sequence of integers larger than $k+1$ in the first column. Hence we have the equivalence

\ytableausetup{mathmode, boxsize=1.6em}
\begin{gather*}
\begin{ytableau} 
1 & a_1 & \cdots & \scriptscriptstyle a_{i-1}& k & a_i & \cdots & a_{s}\\ 
2\\ 
\vdots\\
\scriptscriptstyle k-1\\
\scriptscriptstyle k+p\\
\ell_{j}\\
 \vdots\\
\ell_{t}
\end{ytableau}
\quad\hspace{-.8cm}\equiv\quad
\begin{ytableau} 
1 & a_1 & \cdots & \scriptscriptstyle a_{i-1}& k & a_i & \cdots & a_{s}\\ 
2 & \scriptscriptstyle k+p\\ 
\vdots\\
\scriptscriptstyle k-1\\
\scriptscriptstyle k+p\\
\ell_{j}\\
\vdots\\
\ell_{t}
\end{ytableau}.
\end{gather*}


\begin{case} Assume $\leg(T)$ does not contain 2. 
\end{case}

Since $T$ is initial by assumption, $\arm(T)$ contains $2$. If $\arm(T)$ is non-consecutive we may apply Case 1 to $T^t$. Thus we may assume $\arm(T)=(2,3,\ldots,k)$.

Consider the tableau  

\ytableausetup{mathmode, boxsize=1.6em}
\begin{gather*}
\begin{ytableau} 
1 & 2 & 3 & \cdots & k \\ 
\scriptscriptstyle k-1\\
\scriptscriptstyle k+p
\end{ytableau}.
\end{gather*}

We show by induction that this is not a URT for all $k \ge 3$. When $k = 3$ the tableau is not a URT due to the below equivalence, which can be easily checked.
\ytableausetup{mathmode, boxsize=1.6em}
\begin{gather*}
\begin{ytableau} 
1 & 2 & 3 \\ 
2\\ 
4\
\end{ytableau}
\quad\equiv\quad
\begin{ytableau} 
1 & 2 & 3 \\ 
2 & 4\\ 
4\
\end{ytableau}.
\end{gather*}

Assume that for some $k$, we have the equivalence: 
\ytableausetup{mathmode, boxsize=1.6em}
\begin{gather*}
\begin{ytableau} 
1 & 2 & 3 & \cdots & k\\ 
\scriptscriptstyle k-1\\
\scriptscriptstyle k+p
\end{ytableau}
\quad\equiv\quad
\begin{ytableau} 
1 & 2 & 3 & \cdots & k\\ 
\scriptscriptstyle k-1 & \scriptscriptstyle k+p\\
\scriptscriptstyle k+p
\end{ytableau}.
\end{gather*}

We show that
\ytableausetup{mathmode, boxsize=1.6em}
\begin{gather*}
\begin{ytableau} 
1 & 2 & 3 &\cdots & \scriptscriptstyle k+1\\ 
k\\
{\parbox{1.4em}{\tiny $k+$ \\ $p+1$}}\\
\end{ytableau}
\quad\equiv\quad
\begin{ytableau} 
1 & 2 & 3 &\cdots & \scriptscriptstyle k+1\\ 
k & {\parbox{1.4em}{\tiny $k+$ \\ $p+1$}}\\
{\parbox{1.4em}{\tiny $k+$ \\ $p+1$}}\\
\end{ytableau}
\end{gather*}

by examining $K$-Knuth equivalence moves on their row words.\\

First, consider the row word of the tableau on the left-hand side of the above equivalence. We have, with commas separating distinct letters for ease of reading:

\[k + (p + 1), k, 1, 2, \ldots , k + 1 \equiv 1, k + (p + 1), k, 2, \ldots , k + 1.\]

In the word above, consider everything to the right of $1$. Standardize it to obtain the word
\[k+1,k-1,1,\ldots,k.\]
By assumption, this word is $K$-Knuth equivalent to 
\[k+1,k-1,k+1,1,\ldots,k.\]
Thus 
\[1,k+(p+1),k,2,\ldots,k+1  \equiv 1,k+(p+1),k,k+(p+1),2,\ldots,k+1.\]
Since these two words respectively insert into the two tableaux depicted above, the tableaux are equivalent.

To generalize this equivalence to all tableaux described in Case 2, use row and column insertion as in the proof of Case 1. 
\end{proof}

\begin{example}
These hook-shaped tableaux have consecutive arm and leg and are therefore URTs:
\[
\young(1234,3,4,5)\qquad
\young(12345,3)\qquad
\young(12345,5,6).
\]
These hook-shaped tableaux do not have both consecutive arm and leg and are therefore not URTs:
\[
\young(1235,4)\qquad
\young(1234,2,4)\qquad
\young(1234,2,3,5).
\]
\end{example}

We have the following result from the proof of Theorem~\ref{claim:1}.

\begin{theorem} \label{claim:4}
If the hook-shaped tableaux
\begin{gather*}
\ytableausetup{boxsize=\boxsize}
T = \begin{ytableau} 
1 & a_{2} & \cdots & a_{n}\\
b_{2}\\
\vdots\\
b_{n}\\
\end{ytableau}
\quad\text{and}\quad T'=
\begin{ytableau} 
1 & a_{2} & \cdots & a_{n}\\
b_{2} & a_{i}\\
\vdots\\
b_{n}\\
\end{ytableau}
\end{gather*}
are initial and $a_i - a_{i-1} > 1$ then $T\equiv T'$.
\end{theorem}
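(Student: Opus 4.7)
The plan is to derive $T \equiv T'$ by transposing the tableaux and invoking the equivalence established in Case~1 of the proof of Theorem~\ref{claim:1}. By the transpose invariance of $K$-Knuth equivalence (invariant~4 in Section~\ref{section:invariants}), it suffices to prove $T^t \equiv (T')^t$. Under transposition, the arm $(a_2, \ldots, a_n)$ of $T$ becomes the leg of $T^t$: the hypothesis $a_i - a_{i-1} > 1$ becomes a gap in the leg of $T^t$ at the $i$-th position, and the extra entry at $(2, 2)$ of $(T')^t$ is the leg value of $T^t$ immediately after the gap.

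This is exactly the type of equivalence proved in Case~1 of the proof of Theorem~\ref{claim:1}, after identifying $a_i$ with $k+p$ and $a_{i-1}$ with $k-1$. Recall that Case~1 first establishes the base equivalence for the minimal hook with leg $(2, 3, \ldots, k-1, k+p)$ by explicit $K$-Knuth moves on the row word, and then extends to the general hook-shaped tableaux of the corresponding form via three kinds of insertions---row insertion of letters greater than $k$ into the first row, column insertion of letters less than $k-1$ into the first column, and column insertion of letters greater than $k+1$ into the first column---each of which preserves $K$-Knuth equivalence. Applying the extension with parameters matched to $T^t$ yields $T^t \equiv (T')^t$, and transposing back gives $T \equiv T'$.

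The main obstacle is handling the case in which the leg of $T^t$ (equivalently, the arm $(a_2, \ldots, a_n)$ of $T$) does not begin with the consecutive initial segment $(2, 3, \ldots, a_{i-1})$ demanded by the Case~1 framework. In that situation one either invokes Case~2 of the proof of Theorem~\ref{claim:1}, which treats the complementary setting where the leg of $T$ does not contain~$2$, or first applies the restriction Proposition~\ref{restrictedalpha} on a suitable interval subalphabet containing the gap to reduce to a tableau in the Case~1 form. Once this matching is verified, the argument above completes the proof.
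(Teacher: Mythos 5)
Your main line of attack --- transpose, match $a_{i-1}$ with $k-1$ and $a_i$ with $k+p$, and quote the Case~1 construction from the proof of Theorem~\ref{claim:1} --- is exactly how the paper obtains this statement (the paper offers no separate proof; it simply extracts the result from that argument). For tableaux whose arm is the consecutive segment $(2,3,\ldots,a_{i-1})$ followed by the gap, your argument is complete and agrees with the paper's intent.

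The problem is the residual case, and it is not a corner case: the application in Proposition~\ref{tableauxon2n} uses arms such as $(2,4,6,\ldots,2n)$ with the gap taken between $2k-2$ and $2k$ for $k\geq 3$, where the arm contains $2$ but is not consecutive before the gap. Neither of your proposed fixes handles this. First, Proposition~\ref{restrictedalpha} only asserts that $T\equiv T'$ implies $T|_{[a,b]}\equiv T'|_{[a,b]}$; it cannot be run in reverse, so establishing an equivalence of suitable restrictions of $T$ and $T'$ proves nothing about $T$ and $T'$ themselves. Second, Case~2 of the proof of Theorem~\ref{claim:1} applies (after transposing) only when the relevant arm omits $2$ entirely, and its core again imposes a consecutive segment on the other side; it is not the complement of what the Case~1 construction literally covers, so an arm like $(2,4,6,\ldots)$ falls through both cases. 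The missing ingredient is standardization: the core equivalence of Case~1 is a statement about words, and the $K$-Knuth relations depend only on the relative order of letters, so that equivalence persists when $(2,\ldots,k-1)$ is replaced by an arbitrary increasing sequence $a_2<\cdots<a_{i-1}$, when $k$ is replaced by any letter $c$ of $T$ with $a_{i-1}<c<a_i$ (such a $c$ lies in the leg of $T$ by initiality together with $a_i-a_{i-1}>1$), and when $k+p$ is replaced by $a_i$. The remaining arm and leg entries are then built up by performing the same row and column insertions simultaneously on both sides of the equivalence, as in the paper's generalization step.
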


\section{Conjectures and Related Results}
\subsection{Sizes of Tableaux Classes}
In the course of studying the $K$-Knuth
equivalence relation on tableaux,
we computed all equivalence classes
of tableaux on~$[n]$ for $0\leq n\leq 7$.
We were unable to obtain asymptotic bounds on the size of $K$-Knuth equivalence classes, but they seem to grow 
at least as quickly as $n!$.

\begin{table}[h] 
\centering
\caption{Sets of Initial Tableaux\label{table:1}}
\begin{tabular}{cccc}
\toprule
Alphabet Size
& \parbox[c]{0.9in}{Initial Increasing Tableaux}
& \parbox[c]{1.1in}{$K$-Knuth Classes of Initial Tableaux}
& URTs \\
\midrule
0 & 1 & 1 & 1\\
1 & 1 & 1 & 1\\
2 & 3 & 3 & 3\\
3 & 13 & 13 & 13\\
4 & 87 & 79 & 71\\
5 & 849 & 620 & 459\\
6 & 11915 & 6036 & 3313\\
7 & 238405 & 70963 & 25904\\
\bottomrule
\end{tabular}
\end{table}

Table~\ref{table:1} shows that the ratio
of unique rectification classes of tableaux on~$[n]$
to all $K$-Knuth classes of tableaux on~$[n]$
decreases monotonically,
and we expect the ratio to asymptotically
tend to zero.

\begin{conjecture}
Let $I_n$ denote the number of $K$-Knuth
equivalence classes of initial tableaux on~$[n]$,
and let $U_n$ denote the number of URTs
on~$[n]$. Then
$\lim_n U_n/I_n = 0$.
\end{conjecture}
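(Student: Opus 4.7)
The plan is to bound the ratio $U_n/I_n$ asymptotically from above, which reduces to showing that $U_n$ grows strictly slower than $I_n$. We already have the lower bound $I_n \geq (n+1)!$ from the Hecke permutation invariant (Proposition~\ref{permutation}), so it would suffice to prove $U_n = o((n+1)!)$. Unfortunately, any direct upper bound on $U_n$ appears to require a combinatorial characterization of URTs, which is exactly the content of Problem~\ref{problem} and remains open.

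A more tractable first step is to focus on \emph{non-URT classes}. Let $V_n := I_n - U_n$ count $K$-Knuth classes containing at least two tableaux; then $U_n/I_n \to 0$ is equivalent to $V_n/U_n \to \infty$. I would attempt to produce an explicit, richly parameterized family of non-URT classes. Proposition~\ref{prop:sharpRect} already shows that every non-rectangular straight shape $\lambda \neq \tiny\yng(2,1)$ supports a non-URT filling, and the proof of that proposition gives constructions via the pairs $T_{j,k} \equiv T_{j,k}'$ that extend easily across multiple rows. By layering these local modifications into distinct rows (using Lemma~\ref{lem:sharpRect2}), and by separating resulting classes via the invariants of Section~\ref{section:invariants} (outer hook, Hecke permutation, transpose), one might aim for a bound of the form $V_n \geq c\cdot (n+1)! \cdot g(n)$ with $g(n) \to \infty$.

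In parallel, for an upper bound on $U_n$, I would stratify URTs by their invariants. A URT is fixed by its $K$-Knuth class, and hence by (at least) its outer hook, Hecke permutation, standardization data, and the restricted classes $T|_{[a,b]}$. Fixing an outer hook and Hecke permutation, one could bound the number of URT fillings of the interior by showing that each is assembled, in a controlled way, out of the previously identified URT families (minimal, superstandard, right-alignable, hook-shaped, and the fat-hook construction of Buch--Samuel). If these constructions are close to exhaustive, the total URT count should grow far more slowly than the wildly branching non-URT count.

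The main obstacle is, as noted, the absence of a structural characterization of URTs. Without substantially more information, a sharp upper bound on $U_n$ is out of reach, and a sufficiently strong lower bound on $V_n$ requires showing that the constructions in Section~5 really do produce asymptotically many pairwise inequivalent classes, rather than collapsing under the invariants. Progress on Problem~\ref{problem} therefore appears to be a prerequisite. An alternative route---showing that a uniformly random increasing tableau on $[n]$ has a nontrivial $K$-Knuth partner with probability tending to one---is appealing but seems equally delicate, since no tractable random model on increasing tableaux is known to interact cleanly with the $K$-Knuth relations.
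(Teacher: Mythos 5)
This statement is a conjecture in the paper, not a theorem: the authors offer no proof, only the computational evidence of Table~\ref{table:1} showing that the ratio $U_n/I_n$ decreases monotonically for $n\leq 7$. So there is no argument in the paper to compare yours against, and the honest assessment is that your proposal, by its own admission, does not prove the statement either --- it is a research plan whose essential steps are left open.

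The concrete gap is the one you yourself identify, and it is worth naming precisely why neither half of your strategy closes. For the ratio to tend to zero you need $V_n/U_n\to\infty$ where $V_n=I_n-U_n$, which requires controlling $U_n$ from \emph{above}; but the paper's entire machinery (minimal, superstandard, right-alignable, hook-shaped, fat-hook tableaux) produces only \emph{lower} bounds on $U_n$, i.e.\ sufficient conditions for being a URT, and there is no reason to believe these families are close to exhaustive. On the other side, Proposition~\ref{prop:sharpRect} guarantees at least one non-URT filling per non-rectangular shape, and Lemma~\ref{lem:sharpRect2} lets you layer the $T_{j,k}\equiv T'_{j,k}$ gadgets into several rows, but turning this into a count of pairwise distinct non-trivial \emph{classes} that provably dominates $U_n$ would require both a way to certify inequivalence of the resulting classes at scale (the invariants of Section~\ref{section:invariants} are necessary but far from sufficient) and, again, the missing upper bound on $U_n$. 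The lower bound $I_n\geq(n+1)!$ from Proposition~\ref{permutation} is of no help by itself, since $U_n$ could a priori also grow like $(n+1)!$. Absent progress on Problem~\ref{problem}, the conjecture remains open, and your proposal should be read as a plausible outline rather than a proof.
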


\subsection{Composition of $K$-Knuth Classes of Tableaux}
\begin{proposition}
\label{tableauxon2n}
For every $n\geq2$, there is an equivalence class
of tableaux on~$[2n]$
containing at least $n!$ distinct tableaux.
\end{proposition}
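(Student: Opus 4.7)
The plan is to exhibit, for each permutation $\sigma\in S_n$, a distinct initial increasing tableau $T_\sigma$ on~$[2n]$, all lying in a single $K$-Knuth equivalence class, thereby producing $n!$ distinct tableaux in one class.

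I would proceed by induction on~$n$. The base case $n=2$ is supplied by Example~\ref{example:3124}: the two distinct tableaux $\young(124,3)$ and $\young(124,34)$ on~$[4]$ are $K$-Knuth equivalent, giving a class of size $2=2!$.

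For the inductive step, suppose a class $\mathcal{C}$ of tableaux on~$[2n-2]$ has at least $(n-1)!$ members. For each $T\in\mathcal{C}$ I would produce $n$ distinct extensions $T^{(1)},\ldots,T^{(n)}$ on~$[2n]$ by appending a strip of boxes labeled in $\{2n-1,2n\}$ in $n$ carefully chosen admissible ways, all pairwise $K$-Knuth equivalent, giving a class of total size at least $n\cdot(n-1)!=n!$. The equivalences $T^{(i)}\equiv T^{(j)}$ would be realized by $K$-Knuth moves on row words using the relations $x\equiv xx$ and $xyx\equiv yxy$ with $x=2n-1,\,y=2n$, together with moves $xzy\equiv zxy$ and $yxz\equiv yzx$ that involve a letter of~$T$ sitting adjacent to the appended strip; since all such moves modify only the suffix, equivalences coming from $\mathcal{C}$ are preserved. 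Distinctness of the $n\cdot|\mathcal{C}|$ extensions would be verified by comparing shapes or specific entries in the last two rows or columns.

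The principal obstacle is constructing $n$ genuinely distinct yet mutually equivalent extensions per base tableau: naive duplication via $x\equiv xx$ yields only two choices, not $n$. A more refined construction would encode $\sigma\in S_n$ by an interleaving or bumping pattern of the letters $2n-1$ and $2n$ against the~$n$ largest entries of~$T$ (roughly, encoding $\sigma(i)$ as the row or column in which the $i$th new entry first settles).

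As an alternative that bypasses this obstacle, one could realize the $n!$ tableaux as distinct $K$-rectifications of a single skew increasing tableau on~$[2n]$; by the equivalence of $K$-jeu de taquin and $K$-Knuth equivalence proven earlier, all such rectifications lie in a common $K$-Knuth class. The required skew tableau generalizes the two-rectification example given in Section~2 just after Definition~\ref{def:hookClosed} is previewed (a skew tableau on~$[4]$ with two $K$-rectifications): its shape and filling would be designed to admit $n!$ essentially different forward-slide sequences, obtained by installing $n$ strategically repeated entries whose resolution order during rectification matches the $n!$ orderings of $S_n$. Verifying that distinct orderings yield distinct rectifications would again be the crux of the argument.
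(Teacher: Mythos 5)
Your proposal is a plan rather than a proof, and the gap you yourself flag is the whole theorem. The base case from Example~\ref{example:3124} is fine, but the inductive step requires, for each tableau in a class on $[2n-2]$, a set of $n$ pairwise distinct, pairwise $K$-Knuth equivalent extensions differing only in boxes labeled $2n-1$ or $2n$ --- and no such construction is given. Worse, there is concrete reason to doubt one exists in the form you describe: by Proposition~\ref{restrictedalpha} the restrictions of equivalent tableaux to the subalphabet $\{2n-1,2n\}$ must themselves be equivalent, and by Proposition~\ref{prop:outer_hook} all $n$ extensions must share the same outer hook. Appended strips on a two-letter alphabet satisfying both constraints form a very rigid family, and it is not credible that they supply a factor of $n$ growing without bound; the relation $x\equiv xx$ gives you two options, $xyx\equiv yxy$ essentially one more, and that is where the easy moves stop. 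Your fallback via $K$-jdt has the same character: ``designed to admit $n!$ essentially different forward-slide sequences'' names the desired conclusion without producing the skew tableau or verifying that distinct slide orders yield distinct rectifications.

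The paper avoids the per-step bottleneck entirely. It takes a single hook-shaped tableau $T$ on $[2n]$ with first row $1,2,4,6,\ldots,2n$ and first column $1,2,3,\ldots,2n$, and uses Theorem~\ref{claim:4} (non-consecutive arm entries can be re-inserted without leaving the class) to show that \emph{every} tableau agreeing with $T$ in its first row and column, and whose remaining boxes form an arbitrary increasing tableau $U$ on the letters $4,6,\ldots,2n$, lies in the class of $T$. The count then comes for free: as noted in Section~\ref{section:invariants}, the Hecke permutation gives at least $m!\,$ distinct increasing tableaux on an $(m-1)$-letter alphabet, so there are at least $n!$ choices of $U$. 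In other words, the multiplicity in the paper's class is spread across the $n-1$ letters $4,6,\ldots,2n$ simultaneously, not concentrated in the top two letters at each stage --- which is exactly the resource your induction lacks. If you want to salvage an inductive argument, you would need each step to enlarge the set of letters in which the tableaux are allowed to differ, not merely append two fresh ones.
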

\begin{proof}
\ytableausetup{mathmode, boxsize=2em,centertableaux}
By Theorem~\ref{claim:4}, for every $k = 2,3,\dots,n$ there is an equivalence
\[
\begin{ytableau}
\sss 1 & \sss 2 & \sss 4 & \sss 6 & \cdots & \sss 2n \\
\sss 2 \\
\sss 3 \\
\sss 4\\
\vdots \\
\sss 2n
\end{ytableau} \equiv 
\begin{ytableau}
\sss 1 & \sss 2 & \sss 4 & \sss 6 & \cdots & \sss 2n \\
\sss 2 & \sss 2k \\
\sss 3 \\
\sss 4 \\
\vdots \\
\sss 2n
\end{ytableau}.
\]
Let $T$ denote the tableau on the left,
let $T'$ denote the tableau on the right,
and let $w=\row(T)$.
A simple computation shows that $P(\row(T))=T$, 
and row inserting $2k-2$ into $T$
shows that $T'=P(w(2k-2))$. 
Hence $w=\row(T)\equiv\row(T')\equiv w(2k-2)$.

It follows that if we Hecke insert any of the positive integers
$2,4,6,\cdots,2(n-1)$ in any order into the first 
row~of $T$, the resulting tableau lies in the same equivalence class
as~$T$.
Hence any tableau~$T'$ with the following
two properties is $K$-Knuth equivalent to~$T$.
\begin{enumerate}
\item
The first row and the first column of~$T'$
agree with the first row and the first column of~$T$, respectively.
\item
Let $U$ be the tableau obtained by
removing the first row and first column from $T'$.
Then the tableau~$U$ uses the letters
$4,6,\ldots,2n.$
\end{enumerate}
There are at least $n!$ possibilities for
the tableau~$U$, as we saw in Section~\ref{section:invariants}.
Thus the class of tableaux $K$-Knuth equivalent
to $T$ contains at least $n!$ tableaux.
\end{proof}

The process for generating tableaux 
described in the proof of Proposition~\ref{tableauxon2n} produces many tableaux 
in the equivalence class of a hook-shaped tableau.
It suggests an important relationship between
$K$-Knuth equivalence and row insertion,
a relationship we have yet to fully understand.

\begin{proposition} \label{prop:hookClass}
Let $T$ be an initial, hook-shaped tableau
with $\arm(T)=(a_1,a_2,\ldots,a_k)$ and $\leg(T)=(b_1,b_2,\ldots,b_t)$. 

Let
\begin{align*}
A&=\{a_i\,:\,i\geq1,\,a_{i+1}-a_i\geq 2\}\\
B&=\{b_i\,:\,i\geq1,\,b_{i+1}-b_i\geq 2\}.
\end{align*}
Then the set of straight tableaux that are $K$-Knuth
equivalent to~$T$ includes all the tableaux  obtained by making the following
insertions into~$T$, in any order:
(1)~row inserting elements of~$A$ into the first row of~$T$
and (2)~column inserting elements of~$B$ into
the first column of~$T$.
\end{proposition}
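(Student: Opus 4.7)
My plan is to prove the proposition by induction on the number of insertions performed, with the crux being a single-insertion lemma: for every $a \in A$, the tableau $T \leftarrow a$ is $K$-Knuth equivalent to $T$, and the analogous statement holds for column insertion of any $b \in B$. Once this lemma is established, the ``in any order'' assertion of the proposition follows by a straightforward induction in which the hook structure of $T$ is used only through this lemma.

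To prove the key lemma for row insertion, I would fix $a_i \in A$ so that $a_{i+1} - a_i \geq 2$ and trace through the Hecke insertion of $a_i$ into $T$. Since $a_i$ already lies in row $1$, the smallest row-$1$ entry strictly larger than $a_i$ is $a_{i+1}$, and replacing $a_{i+1}$ by $a_i$ would create a repeat, so rule (4) bumps $a_{i+1}$ into row $2$ without altering row $1$. Row $2$ contains only the entry $b_1$ at position $(2,1)$. The decisive fact is that $b_1 < a_{i+1}$: because $T$ is initial, each of the integers $a_i+1,\ldots,a_{i+1}-1$ appears as a label in $T$; none can appear in the arm (the arm is strictly increasing with $a_i, a_{i+1}$ adjacent), so each appears in the leg, and in particular $a_i+1$ lies in the leg, forcing $b_1 \leq a_i+1 < a_{i+1}$. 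Consequently rule (1) adjoins a box at position $(2,2)$ labeled $a_{i+1}$, producing exactly the tableau $T'$ of Theorem \ref{claim:4}, so $T \equiv T \leftarrow a_i$. The column-insertion statement follows by passing to the transpose: the identifications $\arm(T^t) = \leg(T)$ and $\leg(T^t) = \arm(T)$ make the set $B$ for $T$ coincide with the set $A$ for $T^t$, and $K$-Knuth equivalence is preserved under transposition.

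For the inductive step, suppose $S$ is obtained from $T$ by $n$ of the allowed insertions with $S \equiv T$, and consider the next insertion, say row inserting $a \in A$ (column insertions are symmetric). I would use the chain
\[
\row(S \leftarrow a) \equiv \row(S)\cdot a \equiv \row(T)\cdot a \equiv \row(T \leftarrow a) \equiv \row(T) \equiv \row(S),
\]
in which the first and fourth equivalences are the Buch--Samuel fact that a word is $K$-Knuth equivalent to the row word of its insertion tableau, the second and fifth use that concatenating a common letter to $K$-Knuth equivalent words preserves the equivalence (every move on a subword is a legal move on the enlarged word), and the middle equivalence is the key lemma applied to the original hook tableau $T$. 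This yields $S \leftarrow a \equiv S \equiv T$, closing the induction.

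The main obstacle is the single-insertion analysis, specifically verifying that the bump terminates cleanly by adjoining a box at $(2,2)$ rather than cascading further down column $1$ and modifying leg entries. Without the initial hypothesis on $T$, one could have $b_1 \geq a_{i+1}$, in which case the insertion would enter row $2$ via rule (2) or (3) and potentially alter several leg entries, producing a tableau to which Theorem \ref{claim:4} does not immediately apply. The proof therefore hinges on exploiting initiality to guarantee that the gap $a_{i+1} - a_i \geq 2$ in the arm is filled by leg entries small enough to keep $b_1 < a_{i+1}$, and correspondingly for the transpose argument handling $B$.
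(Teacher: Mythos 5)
Your proof is correct and follows essentially the route the paper intends (the paper leaves this proposition's proof implicit, pointing to the process in Proposition~\ref{tableauxon2n}): a single row insertion of $a_i\in A$ lands exactly on the tableau $T'$ of Theorem~\ref{claim:4} because initiality forces $b_1\le a_i+1<a_{i+1}$, the column case follows by transposing, and iteration works because appending a common letter to $K$-Knuth equivalent row words preserves equivalence. Your explicit chain $\row(S\leftarrow a)\equiv\row(S)a\equiv\row(T)a\equiv\row(T)\equiv\row(S)$ is precisely the argument the paper sketches with $w\equiv w(2k-2)$.
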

\begin{example}
The following tableaux are $K$-Knuth equivalent.
They may all be obtained by row inserting $2$
and column inserting~$3$.
\[
\young(1245,2,3,5,6)\qquad\young(1245,24,3,5,6)\qquad
\young(1245,25,3,5,6)\qquad\young(1245,24,35,5,6)
\]
Proposition~\ref{prop:hookClass} does not give all
tableaux in a class. The tableaux above
are also equivalent to the tableau
\[
\young(1245,245,3,5,6),
\]
which cannot be obtained by making the described row and column insertions.
\end{example}

Proposition ~\ref{tableauxon2n} shows that the maximum size of a $K$-Knuth equivalence class
of tableaux on~$[n]$ is unbounded as $n$ increases.
In fact, the maximum number of standard tableaux in a $K$-Knuth equivalence class is also unbounded.

\begin{proposition}
For every $n > 0$, there exists a $K$-Knuth class containing at least $2^n$ standard tableaux.
\end{proposition}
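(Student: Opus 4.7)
The plan is to construct, for each $n \geq 1$, a $K$-Knuth class containing $2^n$ distinct standard tableaux. I will proceed in two stages: first, establish a base pair of equivalent standard tableaux; second, combine $n$ shifted copies on disjoint alphabets to obtain $2^n$ equivalents.

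For the base case, I would seek two distinct standard tableaux $S_0 \ne S_1$ on an alphabet $[m]$ with $S_0 \equiv S_1$. By Propositions \ref{prop:outer_hook}, \ref{lis_lds_equality}, \ref{restrictedalpha}, and \ref{permutation}, any such pair must share outer hook, $\lis$, $\lds$, multiset of entries, and Hecke permutation, substantially narrowing the search. I would locate such a pair via the algorithm of Section \ref{section:algorithms} on a sufficiently large alphabet, or by directly verifying $K$-jdt equivalence on an explicit candidate of, say, shape $(3,3,3)$ or $(4,3,2)$ with matching invariants.

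Given $S_0$ and $S_1$ on $[m]$, for each $\epsilon=(\epsilon_1,\dots,\epsilon_n) \in \{0,1\}^n$ I would construct $T_\epsilon$ by placing $n$ shifted copies $S_{\epsilon_i}^{(i)}$ (with entries in $[(i-1)m+1, im]$) into a single straight standard tableau on $[nm]$, arranged so that each copy's cells occupy a contiguous block of full rows. Under this layout, $\row(T_\epsilon)$ factors as $\row(S_{\epsilon_n}^{(n)})\cdots\row(S_{\epsilon_1}^{(1)})$, and the $K$-Knuth moves witnessing $S_0 \equiv S_1$, shifted to the alphabet $[(i-1)m+1, im]$, act within block $i$ without affecting any other block. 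Hence all $2^n$ tableaux $T_\epsilon$ are $K$-Knuth equivalent to each other; they are standard because the alphabets are disjoint, and they are distinct because different $\epsilon$ specify different $S_{\epsilon_i}$ in at least one block.

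The main obstacle is the geometric layout: since $S_0$ and $S_1$ cannot be rectangular by Corollary \ref{cor:rect_URT}, naive vertical stacking will not yield a straight Young diagram (the row-length sequence fails to be non-increasing at the boundary between copies). Overcoming this will require selecting the shape of $S_0, S_1$ and the overall arrangement carefully, perhaps by introducing filler cells from the disjoint alphabets to square off each block, or by a nested construction that preserves the block factorization of the row word while producing a valid straight tableau. Establishing the base case is a secondary obstacle, but the many shared invariants constrain the search substantially; once a single equivalent standard pair is identified, the rest of the argument should carry through.
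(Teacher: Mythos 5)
Your overall strategy is the same as the paper's: find one pair of distinct $K$-Knuth equivalent standard tableaux and then assemble $n$ independent copies on disjoint shifted alphabets to get $2^n$ equivalent standard tableaux. However, as written the proof has two genuine gaps. First, the base pair is never exhibited; you only describe how you would search for one. The paper supplies it explicitly: $T=\young(125,347,6)$ and $T'=\young(125,34,67)$ are equivalent because $36731452\equiv 36734152$ (a single move $yxz\equiv yzx$ on $3\,1\,4$) and these words Hecke insert into $T$ and $T'$ respectively. Without some such explicit pair the whole construction has nothing to stand on, and the invariants you list only narrow the search -- they do not produce a pair.

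Second, the layout problem you flag as ``the main obstacle'' is exactly where the argument lives, and your proposed resolution is in tension with itself. Any equivalent pair $S_0\neq S_1$ of standard tableaux will in general have different shapes (the known pair has shapes $(3,3,1)$ and $(3,2,2)$), so a block that ``occupies a contiguous block of full rows'' cannot be swapped for its partner without destroying the weakly decreasing row lengths; and once you add filler cells to square things off, the copies no longer occupy full rows, so $\row(T_\epsilon)$ no longer factors as a concatenation of block row words and your one-line independence argument breaks. The paper's resolution is a concrete staircase of $3\times3$ blocks with the $7$-box copies placed along the outermost block diagonal; there the copies share rows with filler blocks, the row word does \emph{not} factor, and the equivalence is instead justified by choosing a reading word of the big tableau in which the seven letters of a given copy appear consecutively (possible because each copy sits at an outer corner) and performing the $K$-Knuth moves for $S_0\equiv S_1$ inside that segment. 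You would need to supply both the explicit pair and this kind of layout-plus-reading-word argument to complete the proof.
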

\begin{proof}
We sketch the construction and leave
the details to the reader.
Consider first the tableaux
\[
T = \young(125,347,6)\quad\text{ and }\quad T' = \young(125,34,67).
\]
Note that $T\equiv T'$ since
$36731452\equiv 36734152$ and the
two words insert into $T$ and $T'$ respectively.

Using $3\times 3$ blocks, we build a standard tableau $U$ that has $3n$ rows, $3n$ columns, and the rough shape of a triangle. The outermost $3\times3$-block 
diagonal of $U$ will
consist of $n$ $7$-box tableaux
that have standardization $T$.
For instance, if $n = 3$, one possible construction of $U$ is
\[
\ytableausetup{boxsize=\boxsize}
\newcommand{\gr}{*(lightgray)}
\begin{ytableau}
\gr1  &\gr2  &\gr3  &10 &11 &12 &\gr19 &\gr20 &\gr23 \\
\gr4  &\gr5  &\gr6  &13 &14 &15 &\gr21 &\gr22 &\gr25 \\
\gr7  &\gr8  &\gr9  &16 &17 &18 &\gr24 \\
26 &27 &28 &\gr35 &\gr36 &\gr39 \\
29 &30 &31 &\gr37 &\gr38 &\gr41 \\
32 &33 &34 &\gr40 \\
\gr42 &\gr43 &\gr46 \\
\gr44 &\gr45 &\gr48 \\
\gr47
\end{ytableau}.
\]
Then $U$ is equivalent
to any tableau formed from $U$
by replacing one of the $7$-box
tableaux with an equivalent tableau
that has standardization~$T'$.
There are $2^n$ tableaux
that can result from $U$
by making a sequence of
these modifications, and
by construction each tableau is
standard and equivalent
to the others.
\end{proof}

\subsection{Shapes of Tableaux}
Which shapes appear in a $K$-Knuth class of tableaux?
We initially suspected that each tableau
class contains a minimum and maximum shape,
ordering the shapes under inclusion,
but the following class disproves our conjecture:
\[
\young(125,236,3,4,5)\qquad\young(125,23,36,4,5)\qquad\young(125,236,36,4,5)\qquad\young(125,23,35,46,5).
\]
However, it seems -- and we have not been able to find
a counterexample -- that if two shapes
$\lambda_1\subseteq\lambda_2$ appear among
the shapes in an equivalence class,
then every shape in the interval~$[\lambda_1,\lambda_2]$
of Young's lattice appears among the shapes in that class.

\begin{conjecture}
Let $T_1,T_2,\dots,T_k$ be a
$K$-Knuth class of straight tableaux
and let $T_i$~have shape~$\lambda_i$.
Then the set $\Sigma=\{\lambda_1,\lambda_2,\dots,\lambda_k\}$
has the following property:
if $\lambda_i,\lambda_j\in\Sigma$
then $[\lambda_i,\lambda_j]\subseteq\Sigma$.
\end{conjecture}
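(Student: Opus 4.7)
The plan is to reduce to an ``adjacent cover'' statement in Young's lattice and then establish it using $K$-jeu de taquin and the restriction invariant from Section~2. By induction on $|\lambda_j|-|\lambda_i|$, the conjecture reduces to the following cover step: whenever $\lambda_i \subsetneq \lambda_j$ both appear as shapes in a $K$-Knuth class $\mathcal{C}$ and $|\lambda_j \setminus \lambda_i| \geq 2$, there exists $\mu$ with $\lambda_i \subsetneq \mu \subsetneq \lambda_j$ such that $\mu$ is the shape of some tableau in $\mathcal{C}$. Iterating this step from $\lambda_j$ downward in Young's lattice fills in the entire interval $[\lambda_i,\lambda_j]$.

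To produce an intermediate shape, I would select representatives $T_i \in \mathcal{C}$ of shape $\lambda_i$ and $T_j \in \mathcal{C}$ of shape $\lambda_j$, and choose a removable corner $c$ of $\lambda_j$ with $c \notin \lambda_i$, with the goal of constructing a tableau $T' \in \mathcal{C}$ of shape $\lambda_j \setminus \{c\}$. By Theorem~2.12, $T_i \equiv T_j$ if and only if they are $K$-jdt equivalent, so one approach is to perform a carefully chosen forward slide on a skew shape obtained from $T_j$ that targets the cell $c$, producing a rectification of shape $\lambda_j \setminus \{c\}$. A complementary approach is to exploit Proposition~2.16: the restrictions $T_i|_{[1,k]} \equiv T_j|_{[1,k]}$ for every $k$, and both sequences of shapes grow from empty up to $\lambda_i$ and $\lambda_j$ respectively by adding at most two boxes per step (since each letter occupies at most two cells of an increasing tableau). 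Tracking how the two shape chains diverge and recombine should yield a value of $k$ and an associated equivalent pair whose shapes force the existence of a tableau of intermediate shape in $\mathcal{C}$.

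An alternative constructive route uses the algorithm of Section~3: every pair in $\mathcal{C}$ is connected by a chain of tableaux whose consecutive terms differ by either inserting the two words of a primitive pair into a common tableau, or by inserting a fixed letter into a previously-connected pair. In each such primitive step, the two resulting shapes differ by at most a bounded amount (bounded by the length of the primitive pair, which is at most three). One could then attempt to show that any such ``polygonal'' path in Young's lattice connecting two comparable shapes in $\mathcal{C}$ must visit or, via rerouting through equivalent representatives, imply the visitation of every intermediate shape.

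The main obstacle is that $K$-jdt slides, unlike their classical counterparts, can simultaneously add and remove boxes, so a single slide gives no monotone control over the shape; likewise the algorithmic chain above can skip over intermediate shapes in $[\lambda_i,\lambda_j]$. The heart of a full proof must therefore be a ``one-box descent'' lemma: given $T \in \mathcal{C}$ of shape $\lambda$ and a removable corner $c$ of $\lambda$ with $\lambda \setminus \{c\} \supseteq \lambda_i$, either $\lambda \setminus \{c\}$ already appears in $\mathcal{C}$ or one can replace $T$ by an equivalent tableau in which such a descent is achievable. I expect the $K$-theoretic relations $x \equiv xx$ and $xyx \equiv yxy$, which allow tableaux of the same class to differ by single cells, to be the essential ingredient in establishing this lemma, and this is where the genuine combinatorial difficulty lies.
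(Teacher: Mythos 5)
This statement is left as an open conjecture in the paper --- the authors only report that it has been verified by computer for classes of tableaux on $[n]$ with $n\leq 7$ --- so there is no proof in the paper to compare against, and your proposal does not close the gap either. You have written a strategy sketch, not a proof: your own final paragraph concedes that the ``one-box descent'' lemma is unestablished, and that lemma is essentially the entire content of the conjecture. Nothing in the proposal actually constructs a tableau of an intermediate shape; the $K$-jdt and restriction-invariant ideas are plausible starting points but are not carried out, and (as you note) a single $K$-jdt slide can add and remove boxes simultaneously, so no monotonicity is available.

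There is also a logical gap in the reduction itself, independent of the missing lemma. The interval $[\lambda_i,\lambda_j]$ in Young's lattice consists of \emph{all} shapes $\mu$ with $\lambda_i\subseteq\mu\subseteq\lambda_j$, and your induction only produces \emph{one} intermediate shape per comparable pair and then recurses on $[\lambda_i,\mu]$ and $[\mu,\lambda_j]$. The union of these two subintervals is in general a proper subset of $[\lambda_i,\lambda_j]$: if $\lambda_j\setminus\lambda_i$ consists of two incomparable boxes $c_1,c_2$, then finding $\mu=\lambda_i\cup\{c_1\}$ and recursing never forces $\lambda_i\cup\{c_2\}$ to appear. So even granting the cover step as you state it (``there exists $\mu$''), interval-closedness does not follow; you would need the stronger claim that $\lambda_j\setminus\{c\}$ appears for \emph{every} removable corner $c\notin\lambda_i$, or some other argument that sweeps out the whole interval. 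As it stands the proposal neither proves the cover step nor correctly reduces the conjecture to it.
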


This conjecture has been verified for $K$-Knuth classes on~$[n]$ for $n\leq 7$.

\subsection{Changes in Tableau Shape}
Initially, we conjectured that if two words $w$ and $w'$ differ by just one $K$-Knuth move, then the shapes of $P(w)$ and $P(w')$ differ by just one box. However, we found a counterexample:
$5451342154 \equiv 54513422154$ because $2 \equiv 22$, 
but the two words insert into the following tableaux,
respectively:
\begin{gather*}
\ytableausetup{boxsize=\boxsize}
\begin{ytableau} 
1 & 2 & 4 & 5\\
2 & 5\\
3\\
4\\
5
\end{ytableau}
\quad\equiv\quad
\begin{ytableau} 
1 & 2 & 4 & 5\\
2 & 4 & 5\\
3 & 5\\
4\\
5
\end{ytableau}.
\end{gather*}

\section*{Acknowledgments}
This research was conducted at the 2014 REU program at the University of Minnesota, Twin Cities, and was supported by NSF grant DMS-1148634. For their mentorship and support, we would like to thank Joel Lewis, Gregg Musiker, Pasha Pylyavskyy, and Vic Reiner. We are especially grateful to the reviewer who gave us many valuable improvements on earlier versions of this paper.

\end{document}